\documentclass[12pt]{amsart}
\usepackage{amsfonts}
\usepackage{amsmath}
\usepackage{amssymb}
\usepackage{url,epsfig}
\usepackage{color}
\usepackage{graphicx}
\usepackage{xcolor}
\usepackage{ulem}
\usepackage{cancel}

\newcommand{\cred}[1]{{\color{black} #1}}

\definecolor{rot}{rgb}{0,0,0}
\newcommand{\tcr}{\textcolor{rot}}

\def\eps{\varepsilon}

\newcommand\cC{\mathcal{C}}

\newcommand\cK{\mathcal{K}}

\newcommand\cM{\mathcal{M}}
\newcommand\cN{\mathcal{N}}

\newcommand\cR{\mathcal{R}}
\newcommand\cT{\mathcal{T}}

\newcommand\bE{\boldsymbol{E}}
\newcommand\bF{\boldsymbol{F}}
\newcommand\bH{\boldsymbol{H}}

\newcommand\bn{\boldsymbol{n}}
\newcommand\bp{\boldsymbol{p}}
\newcommand\bq{\boldsymbol{q}}
\newcommand\bs{\boldsymbol{s}}
\newcommand\bu{\boldsymbol{u}}
\newcommand\bv{\boldsymbol{v}}

\newcommand\bff{\boldsymbol{f}}
\newcommand\btheta{\boldsymbol{\theta}}
\newcommand\bnu{\boldsymbol{\nu}}
\newcommand\bpsi{\boldsymbol{\psi}}
\newcommand\balpha{\boldsymbol{\alpha}}

\newcommand{\ddiv}{\operatorname{div}}
\newcommand{\curl}{\operatorname{curl}}
\newcommand{\Div}{\operatorname{Div}}
\newcommand{\Curl}{\operatorname{Curl}}
\newcommand{\Grad}{\operatorname{Grad}}

\renewcommand{\Re}{\operatorname{Re}}
\renewcommand{\Im}{\operatorname{Im}}

\newcommand{\smalf}{\par\smallskip\noindent}
\newcommand{\medlf}{\par\medskip\noindent}

\newcommand{\per}{\mathrm{per}}
\newcommand{\loc}{\mathrm{loc}}

\newcommand{\be}{\begin{eqnarray}}
\newcommand{\ben}{\begin{eqnarray*}}
\newcommand{\en}{\end{eqnarray}}
\newcommand{\enn}{\end{eqnarray*}}
\newcommand{\Z}{{\mathbb Z}}
\newcommand{\N}{{\mathbb N}}
\newcommand{\C}{{\mathbb C}}
\newcommand{\R}{{\mathbb R}}
\newcommand{\s}{{\mathbb S}}

\newtheorem{theorem}{Theorem}[section]
\newtheorem{definition}[theorem]{Definition}
\newtheorem{lemma}[theorem]{Lemma}

\newtheorem{remark}[theorem]{Remark}

\newtheorem{example}[theorem]{Example}
\newtheorem{assumption}[theorem]{Assumption}

\begin{document}

\title{Limiting absorption principle for time-harmonic acoustic and
electromagnetic scattering of plane waves from a bi-periodic inhomogeneous layer}

\author{Guanghui Hu}
\address{Guanghui Hu: School of Mathematical Sciences and LPMC\\
Nankai University \\
Tianjin 300071, China}
\email{ghhu@nankai.edu.cn}
\author{Andreas Kirsch}
\address{Andreas Kirsch: Department of Mathematics \\
Karlsruhe Institute of Technology (KIT) \\
76131 Karlsruhe, Germany}
\email{andreas.kirsch@kit.edu}
\author{Yulong Zhong}
\address{Yulong Zhong: School of Mathematical Sciences and LPMC\\
	Nankai University \\
	Tianjin 300071, China}
\email{zyl99@mail.nankai.edu.cn}

\date{\today \\ The first author (G.H.) was partially supported by the National Natural Science Foundation of China (No. 12071236), the Fundamental Research Funds for Central Universities in China (No. 63213025) and the Natural Science Foundation of Tianjin (No. 25JCZDJC00970). He greatly acknowledges the hospitality of the Institute for Applied and Numerical Mathematics, Karlsruhe Institute of Technology as well as the Alexander von Humboldt-Stiftung.
The second author (A.K.) gratefully
acknowledge the financial support by Deutsche Forschungsgemeinschaft (DFG) through
CRC\,1173}

\begin{abstract}
The Rayleigh expansion is widely used as a formal radiation condition in the analysis
and numerical treatment of grating diffraction problems for incoming plane waves.
However, the Rayleigh expansion does not always lead to uniqueness of open waveguide
scattering problems, due to the existence of surface/guided waves (in other words,
Bound States in the Continuum (BICs)) which exponentially decay in the direction
perpendicular to the periodicity. In this paper we suppose that  a bi-periodic
inhomogeneous medium supports BICs at some real-valued incident wavenumber. Based
on singular perturbation arguments, we justify the Limiting Absorption Principle
(LAP) for both time-harmonic acoustic and electromagnetic scattering of plane waves
from bi-periodic structures. Replacing the wavenumber $k$ with $k+i\epsilon$, we
prove that the unique solution with $\epsilon>0$ converges to a solution of the
original diffraction problem that additionally satisfies an orthogonal identity. This
constraint condition together with the classical Rayleigh expansion leads to a sharp
radiation condition to ensure uniqueness of time-harmonic scattering of plane waves
by  BIC-supporting bi-periodic materials.

\vspace{.2in}

\noindent
Keywords: Plane wave, limiting absorption principle, diffraction gratings,
Helmholtz equation, Maxwell system, uniqueness.
\end{abstract}

\maketitle

%Version 8 of \today

\section{Introduction}

Time-harmonic scattering problems for layered structures require appropriate
radiation conditions to ensure solution uniqueness \cite{Weder, Xu}. The Rayleigh
expansion is widely employed in analyzing and numerically solving  grating
diffraction problems for incident plane waves \cite{AT,K93, R07, P1980}. However, the
Rayleigh expansion does not always guarantee uniqueness \cite{BBS94, KN02, G00}. This
is due to the potential existence of surface or guided waves -specifically, Bound
States in the Continuum (BICs)- which decay exponentially along the direction
perpendicular to the direction of periodicity. Resonance occurs only when the wave
field's quasi-periodic momentum, uniquely determined by the incident wavenumber and
angles, coincides with a  {propagative wave vector} (that is, when the homogeneous problem
admits non-trivial propagating solutions under the classical Rayleigh expansion
radiation condition). It has been shown in the literatures that for a fixed incident
angle, the scattering problem is well-posed except for a discrete set of incident
wavenumbers \cite{AT,B97,GP22,D93,ES98}. This work aims to derive a novel radiation
condition to ensure uniqueness at these exceptional wavenumbers.
\smalf
The Limiting Absorption Principle (LAP) is a fundamental concept in wave scattering
theory that resolves resonance singularities by introducing artificial dissipation,
ensuring the well-posedness of the mathematical model; see e.g., \cite{Hoang,Nosich,
Weder} for its application in layered structures and waveguides. Recently, the LAP
has been employed to derive radiation conditions for scattering problems involving
compactly supported source terms in both closed and open periodic waveguides
\cite{FJ16, K22p, K22, KL18}. This approach fundamentally relies on the one-dimensional
Floquet-Bloch transform and singular perturbation arguments involving two variables.
However, this methodology is currently restricted to periodic media varying in only
one direction. For bi-periodic structures, the complexity of periodic Helmholtz
equation's spectral structure (dispersion map) introduces essential difficulties in
handling compactly supported source terms in $\R^3$.
\smalf
Time-harmonic scattering of plane waves by bi-periodic media results in a special
source term that is quasi-periodic but not compactly supported.
The quasi-momentum is uniquely determined by the incident wavenumber and angles.
This scenario presents an opportunity to apply the Limiting Absorption Principle
(LAP) to plane wave scattering in quasi-periodic spaces. Since the quasi-momentum is
fixed, singular perturbation arguments can be adapted to analyze the limiting
solution when the real-valued wavenumber is perturbed by a purely imaginary number.
However, careful treatment of the wavenumber-dependent quasi-periodic space is
required. To address this, we transform the quasi-periodic diffraction problem into
periodic functional spaces before applying the LAP to its equivalent form. Our analysis extends naturally to impenetrable bi-periodic gratings with the
Dirichlet or Neumann boundary conditions. This holds particularly when the quasi-
periodic momentum coincides with a  {propagative wave vector}.
While our earlier work \cite{HK24} applied the LAP to plane wave scattering from a
Dirichlet-type periodic curve, this work appears to be the first treatment of plane
wave scattering by three-dimensional bi-periodic layers.
\smalf
The remainder of this work is organized as follows. Section \ref{sec2} formulates the
three-dimensional grating diffraction problem and establishes the Limiting Absorption
Principle (LAP) for exceptional incident wavenumbers. In Section \ref{sec3}, we
extend the LAP to the time-harmonic electromagnetic scattering problem. The one-
variable singular perturbation arguments together with an auxiliary result are
presented in the Appendix.

\section{Well-posedness for acoustic scattering problem}\label{sec2}

\subsection{Problem formulation}

Assume an acoustic incoming wave is incident onto an inhomogeneous bi-periodic layer
$\R^2\times(-h,h)$ (for some $h>0$) from above. The media above and below the
layer are supposed to be homogeneous and isotropic with the same wavenumber. The
incident wave is supposed to be a time-harmonic plane wave of the form
$u^{in}(x) \exp(-i\omega t)$ with the angular frequency $\omega>0$ and the speed of
sound $c_0 > 0$, where the spatially dependent function  $u^{in}$ takes the form
\be
u^{in}(x)= e^{ik\hat{\theta}\cdot x},\ x\in\R^3,\quad\text{where}\quad
\hat{\theta}=(\sin\theta_1\cos\theta_2,
\sin\theta_1\sin\theta_2,-\cos\theta_1)^\top. \label{plane-wave}
\en
In (\ref{plane-wave}), $\hat{\theta}\in \mathbb{S}^2_-:=\{x\in \R^3: |x|=1, x_3<0\}$
denotes the incident direction from above with the incident angles $\theta_1\in
(-\pi/2,\pi/2)$, $\theta_2\in [0,2\pi)$, and $k:=\omega/c_0>0$ is the
wavenumber of the homogeneous background medium. \tcr{Throughout the paper we set $\tilde{x}=(x_1,x_2)$ and define
 quasi-periodic functions as follows.
\begin{definition}
A function $u(\tilde x, x_3): \R^3\rightarrow \C^3$ is called $\alpha$-quasi-periodic with respect to the parameter $\alpha\in \R^2$ if $e^{-i\alpha\cdot \tilde{x}} u(\tilde{x}, x_3)$ is $2\pi$-periodic with respect to $\tilde{x}$.
\end{definition}
We observe that
the incident field $u^{in}$ can be written as $u^{in}(x)=
e^{ik\tilde\theta\cdot\tilde x}e^{-ik\cos\theta x_3}$ where
$\tilde\theta=\sin\theta_1\binom{\cos\theta_2}{\sin\theta_2}\in\R^2$.
Therefore, the incident field is quasi-periodic with respect to the incidence parameter
$\alpha=k\tilde\theta\in \R^2 $.
Let $q\in L^\infty(\R^3)$ be a positive index of refraction, which is
periodic with respect to $\tilde x$.} Without loss of generality we assume
the periods to be $2\pi$ in both directions. Furthermore, we assume that
$q\equiv 1$ for $|x_3|>h$.
The propagation of the total field $u$ is governed by the Helmholtz equation
\begin{equation} \label{eq:Helmholtz}
\Delta u+k^2 q u=0\quad\mbox{in }\R^3\,.
\end{equation}
The total field $u$ is decomposed into
\begin{align*}
u=\left\{\begin{array}{lll}
u^{in}+u^{sc}, & x_3>h, \\ u^t, &  x_3<-h, \end{array} \right.
\end{align*}
where $u^{sc}$ denotes the wave fields scattered back into $x_3>h$ and $u^{t}$ the
wave fields transmitted into the lower half space $x_3<-h$.
The above model must be complemented with appropriate radiation conditions
for $u^{sc}$ and $u^t$ as $x_3\rightarrow\pm\infty$.
\tcr{Since the incident field is $\alpha$-quasi-periodic}, we expect that the fields $u^{sc}$ and $u^{t}$
are also quasi-periodic with same parameter. Therefore, with
$$ Q_\infty\ :=\ (0,2\pi)^2\times\R $$
we search for a solution in the space $H^1_{\alpha,\loc}(Q_\infty)$, defined as
$$ H^1_{\alpha,loc}(Q_\infty)\ :=\ \bigl\{\phi|_{Q_\infty}:\phi\in H^1_{loc}(\R^3),
\ \tilde x\mapsto e^{-i\alpha\cdot\tilde x}\phi(\tilde x,x_3)
\mbox{ is $2\pi$-periodic}\bigr\}\,, $$
and require the so-called quasi-periodic Rayleigh expansion of the outgoing waves,
i.e.
\begin{equation} \label{eq:Rayleigh}
u(\tilde x,x_3)\ =\ \left\{\begin{array}{cl}
u^{in}(x)+\sum\limits_{n\in\Z^2}u^+_n\,e^{i\alpha_n\cdot \tilde{x}
+i\beta_n (x_3-h)} & \text{ for }x_3>h\,, \\
\sum\limits_{n\in\Z^2}u^{-}_n\,e^{i\alpha_n\cdot \tilde{x}-
i\beta_n (x_3+h)} & \text{ for }x_3<-h\,,\end{array} \right.
\end{equation}
for some $u_n^\pm\in\C$, $n=(n_1,n_2)\in \Z^2$, where $\alpha_n:=
n+\alpha=n+k\tilde\theta $ and , because $|\tilde\theta|=|\sin\theta_1|$,
\begin{equation}\label{eqn:betaN}
\begin{split}
\beta_n\ =\ & \beta_n(k)\ :=\ \sqrt{k^2-|\alpha_n|^2}\ =\
\sqrt{k^2-|n+k\tilde\theta|^2} \\
=\ & \sqrt{k^2\cos^2\theta_1-2k\,n\cdot\tilde\theta-|n|^2}\,.
\end{split}
\end{equation}

\begin{figure}[h]
\centering
\hspace*{-1.5cm}\includegraphics[width=10cm,height=8cm]{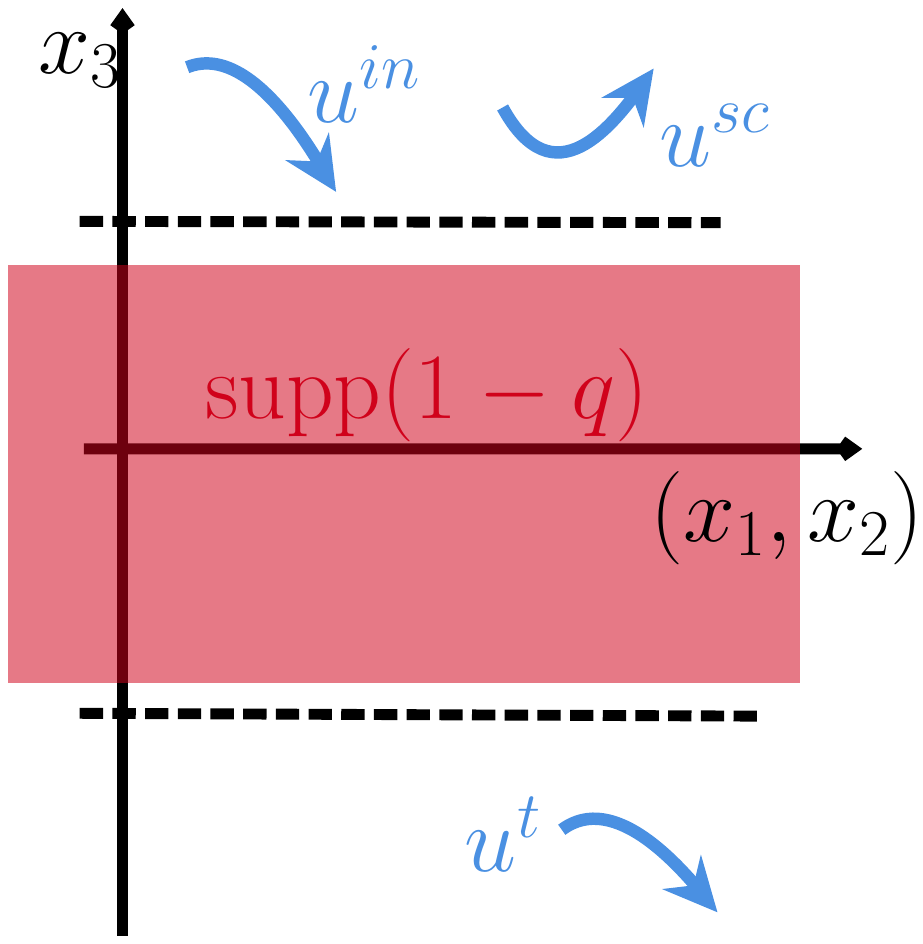}
\vspace*{-1.8cm}
\caption{Diffraction of an acoustic plane wave from a penetrable bi-periodic layer
in $\R^3$.}
\end{figure}

Here and in all of the paper, we choose the square root function to be holomorphic
in the cutted plane $\C\setminus(i\R_{\leq 0})$. In particular,
$\sqrt{t}=i\sqrt{|t|}$ for $t\in\R_{<0}$. Then the convergence of the
series \eqref{eq:Rayleigh} is uniform for $|x_3|\geq h+\eps$ and for every $\eps>0$.
\smalf
We do not indicate the dependence
on $\tilde\theta=\sin\theta_1\binom{\cos\theta_2}{\sin\theta_2}$ because it is kept
fixed. Although the wavenumber $k$ is real we note that the definition of $\beta_n$
allows also complex values of $k$ as the following lemma shows.
\begin{lemma} \label{lem:beta}
If $\alpha=k\tilde{\theta}$, then $\inf\{\Im\beta_n(k+i\eps):n\in\Z^2\}>0$ for all $k,\eps>0$.
\end{lemma}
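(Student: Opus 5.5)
Write $\kappa=k+i\eps$ and note that $\alpha=\kappa\tilde\theta$ is complex, so that by \eqref{eqn:betaN}
$$ \beta_n(\kappa)^2\ =\ \kappa^2\cos^2\theta_1-2\kappa\,n\cdot\tilde\theta-|n|^2\ =:\ z_n . $$
The plan has three steps. First, show that $z_n$ never lies on the branch cut $i\R_{\le 0}$, so that $\beta_n(\kappa)$ is well defined and $\Im\beta_n(\kappa)>0$ for every $n$. Second, obtain a lower bound for $\Im\beta_n(\kappa)$ that is uniform in $n$ for all large $|n|$. Third, handle the finitely many remaining $n$ by the pointwise positivity from the first step.

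For the first step, compute
$$ \Im z_n\ =\ 2k\eps\cos^2\theta_1-2\eps\,n\cdot\tilde\theta ,\qquad \Re z_n\ =\ (k^2-\eps^2)\cos^2\theta_1-2k\,n\cdot\tilde\theta-|n|^2 . $$
Rather than arguing directly with $z_n$, use the direct identity
$$ \beta_n(\kappa)^2\ =\ \kappa^2-|n+\kappa\tilde\theta|^2 . $$
Write $\beta=a+ib$. If $b\le 0$, the branch convention forces $a>0$ (or $\beta=0$). Then $\Im\beta^2=2ab$ and $\Re\beta^2=a^2-b^2$.

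A more robust route is to argue by the intermediate value property. Consider $z_n$ as a function of $\kappa$ along the ray from $k$. We need $\Im\beta>0$ exactly when $z_n\notin[0,\infty)$. With our root choice, $\Im\sqrt{z}\ge 0$ always, and equality holds iff $z\in[0,\infty)$. So it suffices to show $z_n\notin[0,\infty)$.

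Suppose $\Im z_n=0$. Then $n\cdot\tilde\theta=k\cos^2\theta_1$, and
$$ \Re z_n\ =\ (k^2-\eps^2)\cos^2\theta_1-2k^2\cos^2\theta_1-|n|^2\ =\ -(k^2+\eps^2)\cos^2\theta_1-|n|^2\ <\ 0 , $$
since $\cos\theta_1>0$. Hence $z_n\notin[0,\infty)$, and $\Im\beta_n(\kappa)>0$ for each $n$.

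For the second step, take $|n|$ large. Then $|n\cdot\tilde\theta|\le|n|$, so
$$ \Re z_n\ \le\ C(1+|n|)-|n|^2\ \le\ -\tfrac12|n|^2 \quad\text{for } |n|\ge N_0 . $$
Also $\Re z_n\to-\infty$ while $|\Im z_n|\le C(1+|n|)=o(|\Re z_n|)$. For $z=-R+iy$ with $R>0$, we have $\Im\sqrt z\ge\sqrt{R}$, since $\Im\sqrt{z}^2=(|z|-\Re z)/2\ge R$. Hence $\Im\beta_n(\kappa)\ge|n|/\sqrt2$ for $|n|\ge N_0$. In particular, these $\Im\beta_n$ are bounded below by a positive constant and in fact tend to infinity.

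For the third step, the finitely many $n$ with $|n|<N_0$ each have $\Im\beta_n(\kappa)>0$ by the first step, so their minimum is positive. Combining this with the second step gives $\inf_n\Im\beta_n(\kappa)>0$.

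The main obstacle is the first step: ruling out $z_n\in[0,\infty)$ exactly. Away from $\Im z_n=0$ the conclusion $\Im\sqrt{z_n}>0$ is automatic from the branch choice, so the whole difficulty concentrates on the case $\Im z_n=0$, which the computation above settles.
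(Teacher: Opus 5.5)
Your three-step structure (pointwise positivity, a uniform bound for large $|n|$, a finite minimum over the rest) matches the paper's. Steps two and three are fine; your bound $\Im\beta_n\ge|n|/\sqrt2$ is a sharper version of the paper's remark that $\beta_n\approx i|n|$. Its sign comes directly from $\Re z_n<0$, which puts $\arg\sqrt{z_n}$ in $(\pi/4,3\pi/4)$.

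\textbf{The gap is in step one.} You claim that with the paper's root $\Im\sqrt z\ge 0$ for all $z$, with equality iff $z\in[0,\infty)$. This is false for the convention used here. The root is holomorphic on $\C\setminus(i\R_{\le 0})$, so $\arg z\in(-\pi/2,3\pi/2)$ and $\arg\sqrt z\in(-\pi/4,3\pi/4)$. For example, $\sqrt{1-i}=2^{1/4}e^{-i\pi/8}$ has negative imaginary part. The property you state holds for a cut along $[0,\infty)$, which the paper deliberately avoids: at real $k$ the propagating values $z_n>0$ must lie in the domain of holomorphy.

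With the correct branch, $\Im\sqrt z>0$ exactly when $z$ lies outside the closed quadrant $\{\Re z\ge 0,\ \Im z\le 0\}$, and that quadrant also contains the cut. Excluding only $z_n\in[0,\infty)$ therefore proves neither $\Im\beta_n>0$ nor your stated goal that $z_n$ avoids $i\R_{\le 0}$. The case $\Im z_n<0$ is never treated.

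\textbf{The repair} is your own computation with $\le$ in place of $=$. Since $\eps>0$, $\Im z_n\le 0$ means $n\cdot\tilde\theta\ge k\cos^2\theta_1$. Hence
$\Re z_n\le (k^2-\eps^2)\cos^2\theta_1-2k^2\cos^2\theta_1-|n|^2=-(k^2+\eps^2)\cos^2\theta_1-|n|^2<0$.
So $z_n$ never meets the closed fourth quadrant, $\beta_n$ is well defined, and $\Im\beta_n>0$ for every $n$. This is exactly the contrapositive of the paper's step: $\Re z_n\ge 0$ forces $n\cdot\tilde\theta<\frac12 k\cos^2\theta_1$, hence $\Im z_n>0$.

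Incidentally, the route you abandoned was the right one. If $\beta=a+ib$ with $b\le 0$, the branch forces $a>|b|$ (or $\beta=0$), hence $\Re\beta^2\ge 0\ge\Im\beta^2$. The inequality above rules this out.
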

\begin{proof} First we note that
\begin{equation}
\begin{split}
[\beta_n(k+i\eps)]^2\ =\ & (k+i\eps)^2\cos^2\theta_1-2(k+i\eps)\,n\cdot\tilde\theta
-|n|^2 \\
=\ & [(k^2-\eps^2)\cos^2\theta_1-2k\,n\cdot\tilde\theta-|n|^2]\ +\ 2i\eps\,[
k\cos^2\theta_1-n\cdot\tilde\theta]\,.
\end{split}
\end{equation}
If the real part $\Re[\beta_n(k+i\eps)]^2=
(k^2-\eps^2)\cos^2\theta_1-2k\,n\cdot\tilde\theta-|n|^2<0$ then
$\Im\beta_n(k+i\eps)>0$ by the choice of the square root. Let now
$\Re[\beta_n(k+i\eps)]^2\geq 0$. Then
$k^2\cos^2\theta_1-2k\,n\cdot\tilde\theta-|n|^2>0$ and thus
$n\cdot\tilde\theta<\frac{1}{2}k\cos^2\theta_1$ which proves that
$\Im[\beta_n(k+i\eps)]^2=2\eps\,[k\cos^2\theta_1-n\cdot\tilde\theta]>0$.
Therefore, also in this case $\Im\beta_n(k+i\eps)>0$. Finally, for large values of
$|n|$ we have $\beta_n(k+i\eps)\approx i|n|$, i.e. there exists $n_0\in\N$ with
$\Im\beta_n(k+i\eps)\geq 1$ for $|n|\geq n_0$. Since the number of $n\in\Z^2$
with $|n|<n_0$ is finite the lemma is proven.
\end{proof}
Often we will identify a function $u\in H^1_{\alpha,\loc}(Q_\infty)$ with its
quasi-periodic extension to $\R^3$.

It will be necessary to transform the problem for the quasi-periodic solution $u$
to a problem for the periodic (with respect to $\tilde x$) solution
$v(x):=e^{-i\alpha\cdot\tilde x}u(x)=e^{-ik\tilde\theta\cdot\tilde x}u(x)$ which
belongs to the space $H^1_{\per,loc}(Q_\infty)$, defined as $H^1_{\alpha,loc}(Q_\infty)$
for $\alpha=(0,0)$, i.e.
$$ H^1_{\per,loc}(Q_\infty)\ :=\ \bigl\{\phi|_{Q_\infty}:\phi\in H^1_{loc}(\R^3),\
\tilde x\mapsto\phi(\tilde x,x_3)\mbox{ is $2\pi$-periodic}\bigr\}\,. $$
If $u\in H^1_{\alpha,loc}(Q_\infty)$ satisfies \eqref{eq:Helmholtz}, \eqref{eq:Rayleigh}
then $v\in H^1_{\per,loc}(Q_\infty)$ satisfies
\begin{equation} \label{eq:Hper}
\Delta v+2ik\tilde\theta\cdot\nabla_{\tilde{x}} u+k^2(q-\sin^2\theta_1)v=0
\quad\mbox{in}\quad\R^3\,,\quad \tcr{\nabla_{\tilde x}u:=(\partial_1 u, \partial_2 u)\,,}
\end{equation}
complemented by the periodic upward and downward Rayleigh expansions:
\begin{equation}\label{eq:Rayleigh-per}
v(\tilde{x}, x_3)\ =\ \left\{\begin{array}{cl}
e^{-ik\cos\theta_1 x_3}+\sum\limits_{n\in\Z^2}v_n^+\,e^{in\cdot \tilde{x}+
i\beta_n (x_3-h)} &\mbox{ for }x_3>h\,, \\
\sum\limits_{n\in\Z^2}v_n^-\,e^{in\cdot \tilde{x}-
i\beta_n (x_3+h)} & \mbox{ for }x_3<-h\,,
\end{array}\right.
\end{equation}
where $\beta_n=\beta_n(k)$ is again given by \eqref{eqn:betaN}. Since $\beta_n$ has
an extension to $k\in\C$ with $\Im k>0$ we note already here that this periodic
version allows complex values of $k$ (with $\Im k\geq 0$).

For $h>0$ and an arbitrary $\alpha\in\R^2$ set
\begin{eqnarray*}
Q_h & = & (0,2\pi)^2\times(-h, h)\,, \\
H^1_{\alpha}(Q_h) & := & \bigl\{\phi\in H^1(Q_h):\; \tilde x\mapsto
e^{-i\alpha\cdot\tilde x}\phi(\tilde x,x_3)\mbox{ is $2\pi$-periodic}\bigr\},\\
H^1_\per(Q_h) & := & \bigl\{\phi\in H^1(Q_h):\; \tilde x\mapsto
\phi(\tilde x,x_3)\mbox{ is $2\pi$-periodic}\bigr\}\,.
\end{eqnarray*}
Given an incident direction $\hat\theta$, it is well known that the scattering problem
\eqref{eq:Helmholtz}, \eqref{eq:Rayleigh} admits a $\alpha-$quasi-periodic solution
(with $\alpha=k\tilde\theta$) for all $k\in\R_{>0}$. Furthermore, there exists a
discrete set $\mathcal{D}\subset\R_{>0}$ with the only accumulating point at infinity
such that the solution is unique for $k\in \R_{>0}\setminus\mathcal{D}$, see, e.g.,
\cite{BBS94,K93}. The discrete set $\mathcal{D}$ consists of those $k>0$ for which
$\alpha=k\tilde\theta =k\sin\theta_1\binom{\cos\theta_2}{\sin\theta_2}$ is a propagative wave
vector \tcr{defined in Definition \ref{def:critical} (ii) below. }

\begin{definition} \label{def:critical}
\tcr{Let $\alpha\in \R^2$ be a general vector.}
	(i) $\alpha\in \R^2$ is called a \tcr{cut-off value or} cut-off vector if there exists
$n=(n_1,n_2)\in\Z^2$ such that $|\alpha+n|=k$.
\smalf
(ii) $\alpha\in \R^2$ is called a  {propagative wave vector} if there exists a
non-trivial function $\phi\in H^1_{\alpha,loc}(Q_\infty)$ such that
\begin{equation} \label{exc:a}
\Delta\phi + k^2 q \phi\ =\ 0\text{ in }\; \R^3\,,
\end{equation}
and $\phi$ satisfies the upward and downward Rayleigh expansions \eqref{eq:Rayleigh}
for $\pm x_3>h$ where $\beta_n=\sqrt{k^2-|n+\alpha|^2}$. For a  {propagative wave vector} of
the form $\alpha=k\tilde\theta$ we denote the corresponding space of modes by
\begin{equation} \label{eq:mode}
\cM_k\ =\ \bigl\{\phi\in H^1_\alpha(Q_\infty):
\Delta\phi+k^2q\phi=0\text{ in }\R^3\,,\ \phi\text{ satisfies }
\eqref{eq:Rayleigh} \bigr\}\,.
\end{equation}
\end{definition}

From the definition it is obvious that $\alpha\in\R^2$ is a cut-off value or
 {propagative wave vector} if, and only, if $\alpha+\ell$ is a cut-off value or
 {propagative wave vector}, respectively, for all $\ell\in\Z^2$. We illustrate this with a
simple example of a homogeneous layer.
\begin{example} \label{ex:1}
\tcr{Let $k>0$ and $\alpha\in\R^2$ with $|\alpha|>k$.  Consider the height $h=1$ and the constant}
refractive index $q>1$ in the layer $\R^2\times(-1,1)$. We make an  ansatz for a mode
in the form
$$ \phi(x)\ =\ e^{i\alpha\cdot\tilde x}\cdot \left\{\begin{array}{cl}
\cos\sqrt{k^2q-|\alpha|^2}\,e^{-\sqrt{|\alpha|^2-k^2}(x_3-1)}, & x_3>1\,,  \\
\cos(\sqrt{k^2q-|\alpha|^2}x_3)\,, & |x_3|<1\,, \\
\cos\sqrt{k^2q-|\alpha|^2}\,e^{\sqrt{|\alpha|^2-k^2}(x_3+1)}, & x_3<-1\,.
\end{array}\right. $$
We have to choose the parameters $k$, $\alpha$, and $q$ such that the derivative
with respect to $x_3$ is continuous for $x_3=\pm 1$. This leads to the equation
$$ \sqrt{|\alpha|^2-k^2}\,\cos\sqrt{k^2n-|\alpha|^2}\ =\
\sqrt{k^2n-|\alpha|^2}\,\sin\sqrt{k^2n-|\alpha|^2}\,. $$
For a particular example we choose the parameters such that
$\sqrt{k^2n-|\alpha|^2}=\pi/4$ (then $\sin\sqrt{k^2n-|\alpha|^2}=
\cos\sqrt{k^2n-|\alpha|^2}$) and  $\sqrt{|\alpha|^2-k^2}=\sqrt{k^2n-|\alpha|^2}$.
For $q=2$ this leads to $k=\pi/(2\sqrt{2})$ and $|\alpha|=\pi\sqrt{3}/4$. The set
$\{\alpha\in[-1/2,1/2]^2:\exists\ell\in\Z^2:|\alpha+\ell|=k\}$ of cut-off vectors
the set $\{\alpha\in[-1/2,1/2]^2:\exists\ell\in\Z^2:|\alpha+\ell|=\pi\sqrt{3}/4\}$ of
 {propagative wave vector}s (both restricted to $[-1/2,1/2]\times[-1/2,1/2]$) are shown in
Figure~2.
\begin{figure} \label{fig:2}
\includegraphics[width=0.4\textwidth]{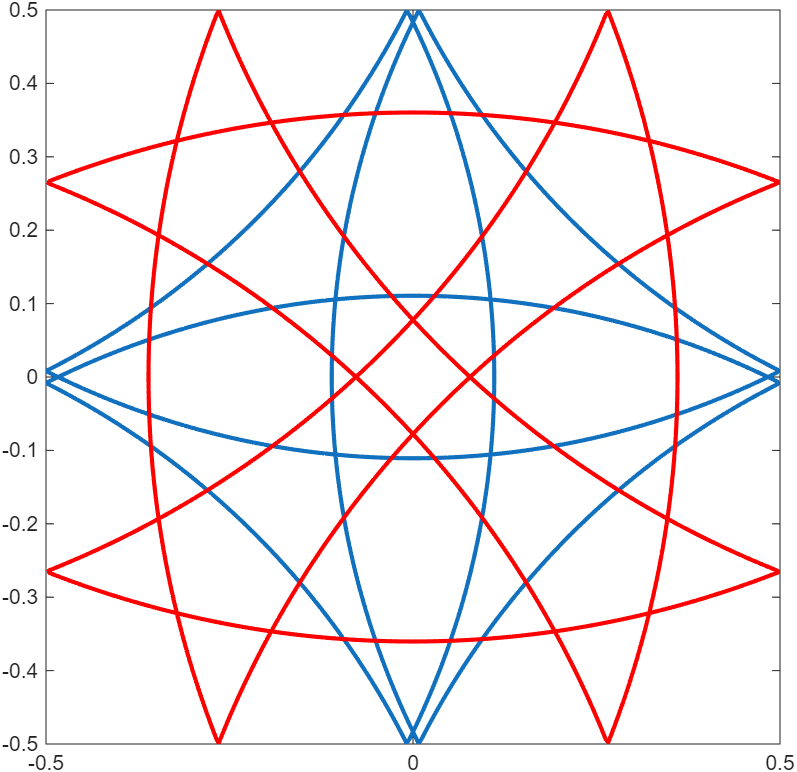}
\caption{The sets of cut-off vectors (blue) and  {propagative wave vector}s (red) for
Example~\ref{ex:1}}
\end{figure}
\end{example}
Therefore, if $\alpha$ is a  {propagative wave vector}, the homogeneous scattering problem with
$u^{in}=0$ admits non-trivial solutions in $H^1_{\alpha,\loc}(Q_\infty)$ satisfying the
Rayleigh condition. The aim of this paper is to investigate uniqueness and existence of
the diffraction problem when $\alpha$ of the form $\alpha=k\tilde\theta$ is a propagative wave
vector but not a cut-off vector. The functions $\phi=\phi(\cdot,\alpha,k)$ to the
homogeneous problem are usually referred to as guided wave modes, which decay
exponentially in the $x_3$-direction; see Lemma \ref{lem:L-prop} (i) below. In physics,
the guided mode $\phi$ of \eqref{exc:a} is called a Bound State in the Continuum (BIC)
if $|\alpha|<k$. In this sense, $\alpha$ being a  {propagative wave vector} implies that the bi-periodic index function $q$ is a BIC-supporting material at the energy $k>0$. Propagative wave
vectors do not exist if the refractive index function $q$ fulfills certain monotonicity
properties, which exclude guided/surface waves propagating along the directions of
periodicity (\cite{BBS94,CZ98, ES98, K95}). In general cases, the relation between the
 {propagative wave vector}s $\alpha\in \R^2$ and the wavenumber $k\in \R$ constitutes the
dispersion map for bi-periodic media.

In the subsequent subsections we fix the incident direction $\hat{\theta}\in
\mathbb{S}_-^2$. Throughout this paper we assume that $k\tilde\theta$ is not a
cut-off vector if it is a  {propagative wave vector}.

\begin{assumption} \label{assump1}
 $|n+\alpha|\not=k$ for every $n\in\Z^2$ if $\alpha=k\tilde\theta=
 k\sin\theta_1(\cos\theta_2\sin\theta_2)^\top$ is a  {propagative wave vector}.
\end{assumption}

\subsection{Variational formulation} \tcr{Let us consider the $\alpha$-quasiperiodic incident plane wave \eqref{plane-wave}, where $\alpha=k\tilde\theta$ and $\tilde\theta\in \R^2$ are kept fixed.} Set
$\Gamma_h:=\{ (x_1, x_2,h): 0<x_1, x_2<2\pi\}$. For $k>0$, i.e.,
$\alpha=k\tilde\theta\in\R^2$, we introduce the $\alpha$-quasi-periodic and periodic,
respectively, Sobolev spaces on $\Gamma_h$  by
\ben
H_\alpha^{1/2}(\Gamma_h) & := & \{f\in H^{1/2}(\Gamma_h):e^{-i\alpha\cdot
\tilde{x}}f(\tilde{x})\mbox{ is $2\pi$-periodic in $\tilde{x}$}\}\,, \\
H_\per^{1/2}(\Gamma_h) & := & \{f\in H^{1/2}(\Gamma_h):f(\tilde{x})\mbox{ is $2\pi$-periodic
in $\tilde{x}$}\}\,.
\enn
Define the periodic and quasi-periodic, respectively, Dirichlet-to-Neumann (DtN) maps
on the artificial boundary $\Gamma_h$ by
\begin{eqnarray}
\hspace*{1cm}(T_k^\pm f)(\tilde{x}) & := & \pm \sum_{n\in\Z^2}i\beta_n\,f_n\,
e^{i n\cdot\tilde{x}},\quad f(\tilde{x})=
\sum_{n\in\Z^2}f_n\,e^{i n\cdot\tilde{x}}\in H_\per^{1/2}(\Gamma_{\pm h})\,,
\label{dtn} \\
\hspace*{1cm}(\tilde{T}^\pm_{k}\tilde{f})(\tilde{x}) & := & \pm \sum_{n\in\Z^2}
i\beta_n\,\tilde{f}_n\,e^{i(n+\alpha)\cdot\tilde{x}},\quad
\tilde{f}(\tilde{x})=\sum_{n\in\Z^2}\tilde{f}_n
e^{i(n+\alpha)\cdot\tilde{x}}\in H_\alpha^{1/2}(\Gamma_{\pm h})\,.
\label{dtn2}
\end{eqnarray}
We indicated the dependence of the operators on $k$ because
$\alpha=k\tilde\theta$ and $\tilde\theta$ is kept fixed. Simple
calculations show that
$$ T_k^\pm: H_\per^{1/2}(\Gamma_{\pm h})\rightarrow
H_\per^{-1/2}(\Gamma_{\pm h}), \quad
\tilde{T}_k^\pm: H_{\alpha}^{1/2}(\Gamma_{\pm h})\rightarrow
H_{\alpha}^{-1/2}(\Gamma_{\pm h}) $$
are linear bounded operators. They are referred to as DtN maps, because
\ben
\tilde{T}_k^+(u^{sc}|_{\Gamma_h})=\frac{\partial u^{sc}(x)}
{\partial x_3}|_{\Gamma_h},\quad \tilde{T}_k^-(u^{t}|_{\Gamma_{-h}})=
\frac{\partial u^{t}(x)}{\partial x_3}|_{\Gamma_{-h}}.
\enn
The following variational formulation for the total field $u\in H^1_{\alpha}(Q_h)$
can be easily derived.
\begin{lemma} \label{lem:equiv}
(a) Let $u\in H^1_{\alpha,\loc}(Q_\infty)$ be a solution of
\eqref{eq:Helmholtz}, \eqref{eq:Rayleigh}. Then $u|_{Q_h}\in H^1_{\alpha}(Q_h)$
solves the variational equation
\begin{equation} \label{eq:var-u}
\tilde{a}_k(u, \phi)\ =\ -\int\limits_{\Gamma_h}
(\tilde{T}_k^+(u^i|_{\Gamma_h})-\partial_\nu u^i)\,\overline{\phi}\,ds\ =\
-2ik\cos\theta_1\, e^{-ikh\cos\theta_1}\int\limits_{\Gamma_h}
e^{ik\tilde\theta\cdot \tilde{x}}\,\overline{\phi}\,ds
\end{equation}
for all $\phi\in H^1_{\alpha}(Q_h)$, where
\begin{equation} \label{atil}
 \tilde{a}_k(u,\phi)\ :=\ \int\limits_{Q_h}\bigl[\nabla u\cdot\overline{\nabla\phi}-
k^2 q u\,\overline{\phi}\bigr]\,dx\ -\ \int\limits_{\Gamma_h}\tilde{T}^+_k u\,
\overline{\phi}\,ds\ +\ \int\limits_{\Gamma_{-h}}\tilde{T}^-_k u\,\overline{\phi}\,ds\,
\end{equation}
for all $u,\phi\in H^1_{\alpha}(Q_h)$. The line integrals in \eqref{atil} are actually
the dual forms $\langle\cdot,\cdot\rangle$ in the dual system
$\langle H^{-1/2}(\Gamma_h),H^{1/2}(\Gamma_h)\rangle$.
\smalf
(b) Let $u\in H^1_{\alpha}(Q_h)$ solve \eqref{eq:var-u}. Extend $u$ to $Q_\infty$
by the Rayleigh expansion \eqref{eq:Rayleigh} where $u_n^+$ are the Fourier
coefficients of $u^{sc}=u-u^{in}$ on $\Gamma_h$ and $u_n^-$ are the Fourier
coefficients of $u^t$ on $\Gamma_{-h}$. Then $u$ solves the scattering
problem \eqref{eq:Helmholtz} \tcr{and} \eqref{eq:Rayleigh}.
\smalf
(c) The same equivalence holds for the periodic problem \eqref{eq:Hper},
\eqref{eq:Rayleigh-per}, where the periodic variational formulation for
$v\in H^1_\per(Q_h)$ reads as follows:
\begin{equation} \label{eq:var_per}
a_k(v,\psi)\ =\ -2ik\cos\theta_1\,e^{-ikh\cos\theta_1}\int\limits_0^{2\pi}
\int\limits_0^{2\pi}\overline{\psi(\tilde{x},h)}\,d\tilde{x}\quad\mbox{for all }
\quad\psi\in H^1_\per(Q_h)
\end{equation}
where now
\ben
a_k(v,\psi) & := & \int\limits_{Q_h} \biggl[\nabla v\cdot\overline{\nabla\psi}-
2i(\alpha\cdot\nabla_{\tilde{x}} v)\overline{\psi}-(k^2q-|\alpha|^2)\,v\,
\overline{\psi}\biggr]\,dx\ \\
& & -\ \int\limits_{\Gamma_h} T_k^+ v\,\overline{\psi}\,ds\ +\
\int\limits_{\Gamma_{-h}}T_k^- v\,\overline{\psi}\,ds \\
& = & \int\limits_{Q_h} \biggl[\nabla v\cdot\overline{\nabla\psi}-
2ik(\tilde\theta\cdot\nabla_{\tilde{x}} v)\overline{\psi}-k^2(q-\sin^2\theta_1)\,
v\,\overline{\psi}\biggr]\,dx \\
& & -\ \int\limits_{\Gamma_h} T_k^+ v\,\overline{\psi}\,ds\ +\
\int\limits_{\Gamma_{-h}}T_k^- v\,\overline{\psi}\,ds
\enn
for all $v,\psi\in H^1_\per(Q_h)$.
\end{lemma}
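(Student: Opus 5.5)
For part (a) we multiply the Helmholtz equation \eqref{eq:Helmholtz} by $\overline{\phi}$ with $\phi\in H^1_\alpha(Q_h)$ and integrate by parts over $Q_h$ via Green's first identity. The normal derivatives only appear on $\Gamma_{\pm h}$, because the lateral boundary terms cancel. Indeed, $u\overline\phi$-type products of two $\alpha$-quasi-periodic functions are periodic: the phases $e^{i\alpha\cdot\tilde x}$ and $e^{-i\alpha\cdot\tilde x}$ cancel. Hence the contributions on opposite faces $x_j=0$ and $x_j=2\pi$ coincide, and since the outward normals are opposite they cancel.

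On $\Gamma_{-h}$ we use $\partial_{x_3}u=\tilde T_k^-u$, which follows from termwise differentiation of the lower Rayleigh expansion. This is justified by the uniform convergence for $|x_3|\ge h+\eps$, together with trace continuity from $x_3<-h$ and the $H^1$ regularity across the interface. On $\Gamma_h$ we write $u=u^{in}+u^{sc}$ and obtain
$$\partial_{x_3}u=\tilde T_k^+u+\bigl(\partial_{x_3}u^{in}-\tilde T_k^+u^{in}\bigr).$$
Inserting this gives \eqref{atil} with right-hand side $-\int_{\Gamma_h}(\tilde T_k^+u^{in}-\partial_\nu u^{in})\overline\phi\,ds$. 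To evaluate it explicitly, note that $u^{in}|_{\Gamma_h}=e^{-ikh\cos\theta_1}e^{i\alpha\cdot\tilde x}$ has only the $n=0$ Fourier mode. Since $\beta_0=\sqrt{k^2-|\alpha|^2}=k\cos\theta_1$, we get $\tilde T_k^+u^{in}=ik\cos\theta_1\,u^{in}$, while $\partial_{x_3}u^{in}=-ik\cos\theta_1\,u^{in}$. Their difference is $2ik\cos\theta_1\,e^{-ikh\cos\theta_1}e^{ik\tilde\theta\cdot\tilde x}$, which is the stated formula. The integrals over $\Gamma_{\pm h}$ are understood as $H^{-1/2}$–$H^{1/2}$ dualities, which is legitimate since $\Delta u\in L^2$ gives a normal trace in $H^{-1/2}$.

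For part (b), we first take $\phi\in C_0^\infty(Q_h)$, which shows $\Delta u+k^2qu=0$ in $Q_h$. Next we define the extension by the Rayleigh series. Each term is an exact Helmholtz solution for $|x_3|>h$ with $q=1$, and the series converges in $H^1_{loc}$ because $\beta_n\sim i|n|$ gives exponential decay of the higher modes, so it yields a quasi-periodic $H^1_{loc}$ function. Continuity of the Dirichlet traces across $\Gamma_{\pm h}$ holds by construction, since the Fourier coefficients are chosen to match.

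For the Neumann traces we use the fact that $u$ solves the Helmholtz equation in $Q_h$ and integrate by parts back in the variational identity with general $\phi$. This shows that the interior normal derivative equals $\tilde T_k^\pm u$ (plus the incident correction on $\Gamma_h$) in $H^{-1/2}$, and this matches the exterior normal derivative. Matching Dirichlet and Neumann traces mean the glued function satisfies the equation weakly across $\Gamma_{\pm h}$, hence in all of $\R^3$ after quasi-periodic extension. The main subtlety here is this weak-trace matching; it is handled by testing with $\phi$ supported near $\Gamma_{\pm h}$ and using the surjectivity of the trace map $H^1_\alpha(Q_h)\to H^{1/2}_\alpha(\Gamma_{\pm h})$.

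For part (c), we substitute $u=e^{i\alpha\cdot\tilde x}v$ and $\phi=e^{i\alpha\cdot\tilde x}\psi$; this gives a bijection $H^1_\per\leftrightarrow H^1_\alpha$. Then $\nabla u=e^{i\alpha\cdot\tilde x}(\nabla v+i(\alpha,0)v)$, and expanding $\nabla u\cdot\overline{\nabla\phi}$ produces $|\alpha|^2v\overline\psi$ together with two mixed first-order terms. Integrating one of the mixed terms by parts in $\tilde x$, with no boundary contribution thanks to periodicity, combines them into $-2i(\alpha\cdot\nabla_{\tilde x}v)\overline\psi$. Finally, $\tilde T_k^\pm(e^{i\alpha\cdot\tilde x}v)=e^{i\alpha\cdot\tilde x}T_k^\pm v$ by the definitions \eqref{dtn}, \eqref{dtn2}, and the phase factors cancel in all products. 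This yields $a_k$ and the right-hand side of \eqref{eq:var_per}, and the equivalence with \eqref{eq:Hper}, \eqref{eq:Rayleigh-per} follows exactly as in (a) and (b).
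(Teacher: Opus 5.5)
Your proposal is correct and follows the route the paper has in mind. The paper writes no proof beyond ``can be easily derived'' and the subsequent remark on lateral cancellation, and your argument supplies it: Green's identity on $Q_h$ with the lateral terms cancelling by quasi-periodicity for real $k$, the DtN maps on $\Gamma_{\pm h}$ with the $n=0$ computation for $u^{in}$, the converse by extension and trace matching, and the phase substitution $u=e^{i\alpha\cdot\tilde x}v$ for (c).

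In (b), two details should be made explicit. First, test functions that do not vanish on the lateral faces of $Q_h$ are what give quasi-periodic matching of the normal derivatives there, so the glued field solves the equation across the cell walls as well. Second, $H^1$ convergence of the Rayleigh series up to $\Gamma_{\pm h}$ comes from the $H^{1/2}$ regularity of the traces ($\sum_n |n|\,|u_n^\pm|^2<\infty$), not from exponential decay alone.
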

\begin{remark} \label{rem:cmplx}
It is worth mentioning that the quasi-periodic variational formulation
\eqref{eq:var-u} is equivalent to the original problem \eqref{eq:Helmholtz},
\eqref{eq:Rayleigh} only for real-valued wavenumbers. This is due to the reason that,
when applying integration by parts, the integrals over the vertical boundaries of $Q_h$
cancel out only for real-valued wavenumbers $k>0$. However, the
periodic version \eqref{eq:var_per} is equivalent to \eqref{eq:Hper},
\eqref{eq:Rayleigh-per}, and, as already mentioned above, can be formulated even for
complex wavenumbers $k\in \C$ (with $\Im k\geq 0$) when we use the definition
\eqref{eqn:betaN} of $\beta_n$.
\end{remark}

We equip $H^1_\per(Q_h)$ with the inner product
\begin{equation} \label{InPr}
\langle v,\psi\rangle_{H^1_\per(Q_h)}\ :=\ \int\limits_{Q_h}
\nabla v\cdot\nabla\overline{\psi}\,dx\ +\ 2\pi\sum_{n\in\Z^2}
(1+|n|^2)^{1/2}\bigl(v^+_n\,\overline{\psi^+_n}+v^-_n\,\overline{\psi^-_n} \bigr)\,,
\end{equation}
where $\psi^\pm_n$ and $v^\pm_n$ denote the Fourier coefficients of
$\psi(\tilde{x},\pm h)$ and $v(\tilde{x}, \pm h)$, respectively. By the
representation theorem of Riesz, there exist $f_k\in H^1_\per(Q_h)$ and a
linear bounded operator $L_k$ from $H^1_\per(Q_h)$ into itself with
\begin{eqnarray}
\langle f_k,\psi\rangle_{H^1_\per(Q_h)} & = & -2ik\cos\theta_1\,e^{-ikh\cos\theta_1}
\int\limits_0^{2\pi}\int\limits_0^{2\pi}\overline{\psi(\tilde{x},h)}\,
d\tilde{x}\,, \label{eq:fk} \\
\langle L_kv,\psi\rangle_{H^1_\per(Q_h)} & = & a_k(v,\psi) \nonumber \\
& = & \int\limits_{Q_h}\bigl[\nabla v\cdot\nabla\overline{\psi}-2i\,
k\,[\tilde\theta\cdot\nabla_{\tilde{x}}v]\,\overline{\psi}-
k^2(q-\sin^2\theta_1)\,v\,\overline{\psi}\bigr]\,dx \label{eq:Lk} \\
&& -\ 4\pi^2\sum_{n\in \N} i\beta_n \left(v^+_n\,\overline{\psi^+_n}+v^-_n\,
\overline{\psi^-_n}  \right)  \nonumber
\end{eqnarray}
for all $\psi,v\in H^1_\per(Q_h)$. Then the variational equation \eqref{eq:var_per}
can be rewritten as
\be\label{eq7}
L_kv\ =\ f_k\quad\mbox{in}\quad H^1_\per(Q_h)\,.
\en
Using the compact embedding of $H^1_\per(Q_h)\rightarrow L^2(Q_h)$, one can show
that the operator $K_k:=I-L_k$, given by
\ben
\langle K_k\,v,\psi\rangle_{H^1_\per(Q_h)}
& := & \int\limits_{Q_h}\bigl[2i\,k\,[\tilde\theta\cdot
\nabla_{\tilde{x}}v]\,\overline{\psi}+k^2(q-\sin^2\theta_1)\,v\,\overline{\psi}
\bigr]\,dx \\
&& +\ 4\pi^2\sum_{n\in \N^2}(\sqrt{1+n^2}+ i\beta_n)
\left(v^+_n\,\overline{\psi^+_n}+v^-_n\,\overline{\psi^-_n}  \right),
\enn
for all $\psi,v\in H^1_\per(Q_h)$, is compact as an operator from $H^1_\per(Q_h)$
into itself. Therefore, $L_k$ is a Fredholm operator with index zero.
\medlf
Further properties of the operator $L_k$ are collected in the following lemma.
\begin{lemma}\label{lem:L-prop}
\begin{itemize}
\item[(i)] For $k>0$, the operator equation \eqref{eq7} admits at least one solution
$v\in H^1_\per(Q_h)$.
The operator $L_k$ is an isomorphism from  $H^1_\per(Q_h)$ onto itself if,
and also if, $\alpha$ is not a  {propagative wave vector}. If $\alpha$ is a
 {propagative wave vector} (and Assumption~\ref{assump1} holds) then the
null space $\mathcal{N}:=\mathcal{N}(L_k)=\mathcal{N}(L_k^\ast)$
is finite dimensional and consists of surface wave modes only, i.e., any
$v\in\cN$ has an extension to $Q_\infty$ with
\begin{equation} \label{eq:evanescent}
v(x)\ =\ \sum_{n\in\Z^2:|n+\alpha|>k}v^\pm _n\,
e^{in\cdot \tilde{x}\mp|\beta_n|(x_3\mp h)}\,,\quad \pm x_3>h\,.
\end{equation}
Therefore, $v\in\cN$ if, and only if, the quasi-periodic extension of
$v(x)e^{ik\tilde\theta\cdot\tilde x}$ belongs to the mode space $\cM_k$.
\item[(ii)] The Riesz number of $L_k$ is one, that is, $\mathcal{N}(L_k)=
\mathcal{N}(L_k^2)$. Moreover, the decomposition
$H^1_\per(Q_h)=\mathcal{N}(L_k)\oplus\mathcal{R}(L_k)$ is orthogonal. Here
$\mathcal{R}(L_k)$ denotes the range of the operator $L_k$.
\item[(iii)] Suppose $\Im k>0$ and
$q(x)\geq \sin^2\theta_1$ in $Q_\infty$. Then $L_k$ (which is well defined
by Remark~\ref{rem:cmplx}) is an isomorphism, and there is a unique solution
$v\in H^1_{\per,loc}(Q_\infty)$ to \eqref{eq:Hper}, \eqref{eq:Rayleigh-per}.
\end{itemize}
\end{lemma}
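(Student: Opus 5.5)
The plan is to treat the three items separately. The common tool is an \emph{energy identity} for the sesquilinear form $a_k$, combined with Fredholm theory for $L_k=I-K_k$, which has index zero as shown above.

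\emph{Item (i).} Fix $k>0$ and write $v^\pm_n$ for the Fourier coefficients of $v(\cdot,\pm h)$. Integrating by parts in $\tilde x$ over the periodic cell gives
\[
\int_{Q_h}-2ik(\tilde\theta\cdot\nabla_{\tilde x}v)\overline{\psi}\,dx
=\overline{\int_{Q_h}-2ik(\tilde\theta\cdot\nabla_{\tilde x}\psi)\overline{v}\,dx},
\]
so the volume part of $a_k$ is Hermitian. Only the DtN terms fail to be Hermitian, and only through the finitely many propagating indices, i.e.\ those with $\beta_n>0$. Consequently
\[
\Im a_k(v,v)=-4\pi^2\sum_{n:\,|n+\alpha|<k}\beta_n\bigl(|v^+_n|^2+|v^-_n|^2\bigr).
\]
If $v\in\cN(L_k)$, this forces $v^\pm_n=0$ for all propagating $n$. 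By Assumption~\ref{assump1} there are no indices with $\beta_n=0$, so only evanescent terms survive in the Rayleigh expansion, and this gives \eqref{eq:evanescent}.

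For such $v$ one has $a_k(v,\psi)=\overline{a_k(\psi,v)}$ for all $\psi$, because the non-Hermitian boundary part only sees the propagating coefficients of $v$, which vanish. Hence $L_k^\ast v=L_kv=0$. Running the same argument with $L_k^\ast$ gives the reverse inclusion, so $\cN(L_k)=\cN(L_k^\ast)$. This null space is finite dimensional by the Fredholm property.

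Existence then follows from the Fredholm alternative. We need $\langle f_k,v\rangle=0$ for all $v\in\cN(L_k^\ast)$. By \eqref{eq:fk}, $\langle f_k,v\rangle$ is a constant multiple of $\overline{v^+_0}$. Since $\beta_0=k\cos\theta_1>0$, the index $n=0$ is propagating, so $v^+_0=0$.

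The remaining claims of (i) follow from Lemma~\ref{lem:equiv}(c) and the transformation $u=e^{i\alpha\cdot\tilde x}v$. Injectivity of $L_k$ is equivalent to the absence of non-trivial modes, i.e.\ to $\alpha$ not being a propagative wave vector, and $v\in\cN$ corresponds exactly to an element of $\cM_k$.

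\emph{Item (ii).} Suppose $L_k^2v=0$. Then $L_kv\in\cN(L_k)\cap\cR(L_k)$. Since $\cR(L_k)$ is closed (Fredholm), we have $\cR(L_k)=\cN(L_k^\ast)^\perp=\cN(L_k)^\perp$. So $L_kv$ lies in $\cN(L_k)\cap\cN(L_k)^\perp=\{0\}$. This gives both $\cN(L_k^2)=\cN(L_k)$ and the orthogonal decomposition $H^1_\per(Q_h)=\cN(L_k)\oplus\cR(L_k)$.

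\emph{Item (iii).} Let $k=\kappa+i\eps$ with $\eps>0$. Fredholmness persists, since $K_k$ is still compact, so it suffices to show injectivity. Suppose $a_k(v,v)=0$ and divide by $k$:
\[
0=\frac1k\int_{Q_h}|\nabla v|^2-2i\int_{Q_h}(\tilde\theta\cdot\nabla_{\tilde x}v)\overline v-k\int_{Q_h}(q-\sin^2\theta_1)|v|^2-\frac{4\pi^2 i}{k}\sum_n\beta_n\bigl(|v^+_n|^2+|v^-_n|^2\bigr).
\]
The sign of each term's imaginary part is as follows.
\begin{itemize}
\item The second term is real, since $\int(\tilde\theta\cdot\nabla_{\tilde x}v)\overline v$ is purely imaginary by periodicity.
\item $\Im(1/k)=-\eps/|k|^2<0$.
\item $\Im\bigl(-k(q-\sin^2\theta_1)\bigr)=-\eps(q-\sin^2\theta_1)\le0$, using the hypothesis $q\ge\sin^2\theta_1$.
\end{itemize}
The main obstacle is the boundary term. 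For it I need $\Re(\beta_n(k)/k)\ge0$. I would prove this along the lines of Lemma~\ref{lem:beta}, writing
\[
\beta_n/k=\sqrt{\cos^2\theta_1-2n\cdot\tilde\theta\,w-|n|^2w^2},\qquad w=1/k,\ \Im w<0,
\]
and checking, case by case according to the sign of the real part of the radicand, that the chosen branch yields a non-negative real part.

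Granting this, the imaginary part of the right-hand side is $\le-\eps|k|^{-2}\|\nabla v\|^2_{L^2(Q_h)}$, so $\nabla v=0$ and $v$ is constant. Substituting a constant back into the equation gives $v=0$, because $\Im\beta_0(k)>0$ by Lemma~\ref{lem:beta}. Hence $L_k$ is injective, therefore an isomorphism, and uniqueness of the extended solution follows from Lemma~\ref{lem:equiv}(c).
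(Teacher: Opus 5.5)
Your proofs of (i) and (ii) are essentially the paper's. In (i), the imaginary part of $a_k(v,v)$ kills the propagating coefficients, the resulting symmetry $a_k(v,\psi)=\overline{a_k(\psi,v)}$ for $v\in\cN$ gives $\cN(L_k)=\cN(L_k^\ast)$, and the Fredholm alternative uses $v^+_0=0$. In (ii), your argument via $\cR(L_k)=\cN(L_k^\ast)^\perp$ is the same computation as the paper's $\|v\|^2=\langle L_k^\ast v,w\rangle=0$.

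For (iii) your route is genuinely different. The paper extends a null vector $v$ by its Rayleigh series to all of $Q_\infty$, where it decays exponentially by Lemma~\ref{lem:beta}. The DtN terms are thereby absorbed into volume integrals, and the equation becomes the pencil $Av-kBv-k^2Cv=0$ with self-adjoint $A>0$, $B$, $C\ge 0$. The paper then linearizes with $W=C^{1/2}$ to $\mathcal{A}V=k\mathcal{B}V$ and uses $\Im k\neq 0$. You stay on $Q_h$ and take the imaginary part of $a_k(v,v)/k$. This is more elementary: there is no unbounded domain and no operator square root. The price is explicit sign information $\Re(\beta_n(k)/k)\ge 0$ on the DtN symbol at complex $k$, whereas the paper only needs decay of the exterior field. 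The paper's formulation pays off later. The same $A,B,C$ on $Q_\infty$ are reused for the injectivity of $PL'(0)$ in Lemma~\ref{lem:DL-prop}, and in the Maxwell analogue, Lemma~\ref{lem:L-isomorphism}. There the Calderon symbol $\frac{i}{\beta_n}\bigl[k^2\bv_n-(\hat\balpha_n\cdot\bv_n)\hat\balpha_n\bigr]$, with $k$ also inside $\hat\balpha_n$, would make a direct sign analysis on $\Gamma_h$ awkward.

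The inequality you defer is true, but your sketch needs one more check. The cut is along $i\R_{\le 0}$, so the chosen root of a radicand in the open third quadrant has negative real part. Moreover, $\beta_n(k)/k$ is a priori only $\pm$ the chosen root of $(\beta_n/k)^2$. This can be repaired, but a cleaner proof avoids branches altogether. From $\beta_n^2=k^2\cos^2\theta_1-2k\,n\cdot\tilde\theta-|n|^2$ and $|\beta_n|^2-\beta_n^2=-2i\beta_n\Im\beta_n$ you get
\[
-2i\,\frac{\beta_n}{k}\,\Im\beta_n\ =\ \frac{|\beta_n|^2+|n|^2}{k}+2\,n\cdot\tilde\theta-k\cos^2\theta_1 .
\]
Taking imaginary parts with $k=\kappa+i\eps$ gives
\[
2\,\Im\beta_n\,\Re(\beta_n/k)\ =\ \eps\Bigl(\frac{|\beta_n|^2+|n|^2}{|k|^2}+\cos^2\theta_1\Bigr)\ >\ 0,
\]
which uses only $\Im\beta_n>0$ from Lemma~\ref{lem:beta}. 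This identity is exactly the exterior energy identity for a single evanescent mode, so your boundary inequality and the paper's volume computation over $Q_\infty\setminus Q_h$ are two faces of the same calculation. In particular it gives $\Re(\beta_0/k)=\cos\theta_1>0$, which also settles your constant case directly.
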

\begin{proof}
(i) If $L_kv=0$ then $ \tilde a_k(u,u)=0$ where $u=ve^{i\alpha\cdot\tilde x}$. Taking
the imaginary part and using the definition of $\tilde{T}_{k}^\pm$ yields
$v_n^\pm=0$ for all $n$ with $|n+\alpha|<k$.
\newline
The adjoint operator $L_{k}^{*}$ of $L_{k}$ is defined by
$$ \langle L_{k}^{*}v,\psi\rangle_{H^1_\per(Q_h)}\ =\
\langle v,L_{k}\psi\rangle_{H^1_\per(Q_h)}\ =\
\overline{\langle L_{k}\psi,v\rangle}_{H^1_\per(Q_h)}\ =\ \overline{a_{k}(\psi,v)} $$
for all $v,\psi\in H_\per^1(Q_h)$. Furthermore, using \eqref{eq:evanescent} we obtain
$$ \langle L_{k}v,\psi\rangle_{H^1_\per(Q_h)}\ =\ a_k(v,\psi)=\overline{a_{k}(\psi,v)}\
=\ \langle L_{k}^\ast v,\psi\rangle_{H^1_\per(Q_h)}\qquad \mbox{for}\quad
v\in\cN(L_k) $$
by simple calculations. From this we conclude that $\cN(L_k)=\cN(L_k^\ast).$ The
existence of a solution $v\in H_\per^1(Q_h)$ follows from the fact that
$\langle f_k,\psi\rangle_{H^1_\per(Q_h)}=0$ for all $\psi\in\mathcal{N}$ and
the Fredholm alternative. Indeed, from the representation \eqref{eq:evanescent} for
$\psi$ and $|\alpha|<k$ we conclude that $\int_0^{2\pi}\int_0^{2\pi}
\psi(\tilde x,h)\,d\tilde x=4\pi^2\psi_{(0,0)}=0$. The space $\mathcal{N}$ is finite
dimensional because $K_k$ is compact.
\smalf
(ii) It is obvious that $\mathcal{N}(L_k)\subset\mathcal{N}(L_k^2).$ To prove the
reverse direction, we assume $L_k^2w=0$ for some $w\in H_\per^1(Q_h)$ and set
$v=L_{k}w\in\mathcal{R}(L_{k}).$ Since $v\in\mathcal{N}(L_k)=\mathcal{N}(L_k^*)$, we
obtain
$$\|v\|^2_{H^1_\per(Q_h)}\ =\ \langle v,v\rangle_{H^1_\per(Q_h)}\ =\
\langle v,L_kw\rangle_{H^1_\per(Q_h)}\ =\ \langle L_k^*v,w\rangle_{H^1_\per(Q_h)}\
=\ 0\,, $$
which proves $\cN(L_k^2)\subset\cN(L_k)$ and thus the coincidence
$\cN(L_k^2)=\cN(L_k)$. This also implies $\cN\cap\cR(L_k)=\emptyset$ and hence
$H_{per,0}^1(Q_h)=\cN\oplus\cR(L_k)$. The orthogonality follows from the relation
$\cN(L_k)=\cN(L_k^\ast)$.
\smalf
(iii) First we note that $L_k$ is a Fredholm operator also for $\Im k>0$ by the same
arguments as for the case of real $k$. Therefore, it suffices to show injectivity.
Let $\Im k>0$ and $v\in H^1_\per(Q_h)$ be a periodic solution of the homogeneous problem
$L_kv=0$. We need to show $v\equiv 0$. We extend $v$ into $Q_\infty\setminus Q_h$ by
the Rayleigh expansion
$$ v(x)\ =\ \sum_{n\in \Z^2}v_n^\pm\,
e^{\pm i\beta_n(x_3\mp h)+i n\cdot \tilde{x}}\,,\quad \pm x_3>h\,. $$
By Lemma~\ref{lem:beta} the imaginary part of $\beta_n$ has a positive lower
bound, that is, $\Im\beta_n\geq\delta>0$ for all $n\in \Z^2$. Therefore,
$v$ is evanescent, i.e. $v\in H^1_\per(Q_\infty)$ and satisfies \eqref{eq:Hper}
in $\R^3$. For any $\psi\in H^1_\per(Q_\infty)$ we apply Green's theorem in $Q_\infty$
and obtain
$$ \int\limits_{Q_\infty}\bigl[\nabla v\cdot\nabla\overline{\psi}-2i\,k\,
[\tilde\theta\cdot\nabla_{\tilde{x}}v]\,\overline{\psi}-
k^2(q-\sin^2\theta_1)\,v\,\overline{\psi}\bigr]\,dx\ =\ 0 $$
for all $\psi\in H^1_\per(Q_\infty)$. This can be written as the quadratic form
$$ Av -kBv-k^2Cv=0 $$
where $A$, $B$, and $C$ do not depend on $k$ and are self-adjoint bounded operators
from $H^1_\per(Q_\infty)$ into itself defined by
\begin{eqnarray*}
\langle Av,\psi\rangle_{H^1_\per(Q_\infty)} & := & \int\limits_{Q_\infty}
\nabla v\cdot\nabla\overline{\psi}\,dx,\qquad
\langle Bv, \psi\rangle_{H^1_\per(Q_\infty)}\ :=\ 2i\int\limits_{Q_\infty}[\tilde\theta
\cdot\nabla_{\tilde{x}}v]\,\overline{\psi}\,dx\,, \\
\langle Cv, \psi\rangle_{H^1_\per(Q_\infty)} & := & \int\limits_{Q_\infty}
(q-\sin^2\theta_1)v\,\overline{\psi}\,dx\quad\text{for }v,\psi\in H^1_\per(Q_\infty)\,.
\end{eqnarray*}
Here, $\langle\cdot,\cdot\rangle_{H^1_\per(Q_\infty)}$ denotes the usual inner product
in $H^1(Q_\infty)$. Note that $A$ is positive on $H^1_\per(Q_\infty)$, that is,
$\langle Av,v\rangle_{H^1_\per(Q_\infty)}>0$
for all $v\not=0$ (because the constant functions do not belong to
$H^1_\per(Q_\infty)$).
%\cblue{Alternative way to prove uniqueness without using the square root (replace green
%part): Multiplying the equation $Av -kBv-k^2Cv=0$ by $v$ and taking the real- and
%imaginary part yields (with $a=\langle Av,v\rangle$, $b=\langle Bv,v\rangle$, and
%$c=\langle Cv,v\rangle$)
%$$ a-b\Re k-c\bigl[(\Re k)^2-(\Im k)^2\bigr]=0\,,\quad b(\Im k)+2c(\Re k)(\Im k)=0\,. $$
%Division of the second equation by $\Im k>0$ yields $b+2c(\Re k)=0$.
%We eliminate $b$ and arrive at $a+c(\Re k)^2+c(\Im k)^2=0$. If $c\geq 0$ -- which is the
%case because $q\geq \sin^2\theta_1$ -- this implies $a=0$, i.e. $v=0$. On the other hand, if
%$c<0$ then, by Friedrich's inequality, $|c|\leq(\Vert q\Vert_\infty+1)\rho\,a$ for some
%$\rho>0$ which is independent of $v$. Then $0=a+c|k|^2\geq a-
%(\Vert q\Vert_\infty+1)\rho|k|^2a=\bigl[1-(\Vert q\Vert_\infty+1)\rho|k|^2\bigr]a$ which
%yields $a=0$ if $|k|$ is sufficiently small.
%}
Therefore, $C$ has
a square root $W$; that is, a bounded and self-adjoint operator $W$ satisfying
$W^2=C$. Then we have $Av-kBv-k^2W^2v=0$. Setting $V=(v_1, v_2)^\top$ with $v_1=v$
and $v_2=kWv$ we obtain the equivalent (since $k\not=0$) system
$\mathcal{A}V=k\mathcal{B}V$, where
$$ \mathcal{A}:= \left(\begin{array}{cc} A & 0 \\ 0 & I \end{array}\right),\quad
\mathcal{B}:=\left(\begin{array}{cc} B & W \\ W & 0 \end{array}\right), $$
and thus $\langle\mathcal{A}V, V\rangle=k\,\langle\mathcal{B}V, V\rangle$.
Since the operator matrices $\mathcal{A}$ and $\mathcal{B}$ are both self-adjoint in
$H^1_\per(Q_\infty)\times H^1_\per(Q_\infty)$ we obtain
$(\Im k)\,\langle\mathcal{B}V, V\rangle=0$, thus $\langle\mathcal{B}V,V\rangle=0$
because $\Im k>0$, thus $\langle\mathcal{A}V,V\rangle=0$, and thus $V=0$. This proves
uniqueness for a complex-valued wavenumber with a positive imaginary part.
\end{proof}

\begin{remark} There exist other approaches to prove uniqueness when the background
medium is absorbing (i.e., $\Im k>0$). In two dimensions, the uniqueness was
justified in \cite{CR} for Dirichlet rough surface scattering problems with a weak
regularity assumption on the boundary.
\end{remark}

\subsection{The limiting absorption argument}

The well-posedness of the scattering problem at a  {propagative wave vector} which is
not a cut-off vector is essentially based on the Limiting Absorption Principle
(LAP) by investigating the limit $u(x; k+i\epsilon)$ as $\epsilon\rightarrow 0+$.
Here $u(x;k+i\epsilon)$ denotes the unique solution to the diffraction problem when
$k>0$ is replaced by $k+i\epsilon$.
For this purpose we need to apply an abstract singular perturbation result from
\cite[Theorem 2.7 and Remark 2.8]{K22p}, which is stated in Lemma \ref{lem:sing-pert} in the
Appendix.
\smalf
To apply Lemma \ref{lem:sing-pert} to the operator equation \eqref{eq7}, we assume that
$\alpha=k\tilde\theta$ is a  {propagative wave vector} (but not a cut-off vector). We set
$X=H^1_\per(Q_h)$ and $\cN=\cN(L_k)$ and denote by $P: H^1_\per(Q_h)
\rightarrow\mathcal{N}$ the projection operator. Set
$f(\epsilon):=f_{(k+i\epsilon)}$ and $L(\epsilon)=L_{(k+i\epsilon)}$. For
$\psi\in H^1_\per(Q_h)$, it follows from the definitions of $f_k$ and $L_k$
(see \eqref{eq:fk} and \eqref{eq:Lk}) that
$$ \langle f(\epsilon),\psi\rangle_{H^1_\per(Q_h)}\ =\ -2i(k+i\epsilon)\cos\theta_1\,
e^{-i(k+i\epsilon)h\cos\theta_1}\int\limits_0^{2\pi}\int\limits_0^{2\pi}
\overline{\psi(\tilde{x},h)}\,dx_1\,dx_2, $$
\begin{eqnarray*}
& & \langle L(\epsilon)v,\psi\rangle_{H^1_\per(Q_h)} \\
& = & \int\limits_{Q_h}\bigl[
\nabla v\cdot\nabla\overline{\psi}-2i(k+i\epsilon)\,[\tilde\theta\cdot
\nabla_{\tilde{x}}v]\,\overline{\psi}\bigr]\,dx\ -
\int\limits_{Q_h}\bigl[(k+i\epsilon)^2(q-\sin^2\theta_1)\,v\,\overline{\psi}\bigr]\,
dx \\
& & -\ i\,4\pi^2 \sum_{n\in \Z^2} \sqrt{(k+i\epsilon)^2\cos^2\theta_1-
2(k+i\epsilon)\,n\cdot\tilde\theta-|n|^2}\,\left(v^+_n\,\overline{\psi^+_n}+
v^-_n\,\overline{\psi^-_n}\right)
\end{eqnarray*}
for $v,\psi\in X$ with Fourier coefficients $v_n^\pm=\frac{1}{2\pi}
\int_0^{2\pi}\int_0^{2\pi}v(\tilde x,\pm h)\,e^{-in\cdot\tilde x}d\tilde x$ and,
analogously, $\psi_n^\pm$. In the series we used the form \eqref{eqn:betaN} of
$\beta_n$ for $k$ replaced by $k+i\epsilon$.

From the above expressions we observe that $f(\epsilon)$ and $L(\epsilon)$ are
differentiable with respect to $\epsilon$ in a neighborhood of $0$, provided
$\alpha=k\tilde\theta=k\sin\theta_1 (\cos\theta_2,\sin\theta_2)$ is not a
cut-off vector. Below we show properties of $f(\epsilon)$ and $L(\epsilon)$.

\begin{lemma}\label{lem:DL-prop}
\begin{itemize}
\item[(i)] $f(\epsilon)\in\mathcal{R}(L(\epsilon))$ for all $\epsilon>0$ and
$f(0),f^\prime(0)\in\mathcal{R}=\mathcal{R}(L(0))$.
\item[(ii)] $PL^\prime(0)$ is one-to-one on $\cN$, if $q(x)\geq \sin^2\theta_1$ in
$\R^3$.
\end{itemize}
\end{lemma}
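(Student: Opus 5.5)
For part (i), the claim $f(\epsilon)\in\cR(L(\epsilon))$ for $\epsilon>0$ follows directly from Lemma~\ref{lem:L-prop}(iii). For this I will assume $q\ge\sin^2\theta_1$ as in part (ii); otherwise I would argue that $L(\epsilon)$ is injective for small $\epsilon>0$ by the Fredholm/analytic perturbation structure. Since $L(\epsilon)$ is then an isomorphism, its range is all of $X$.

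For $f(0)$ and $f'(0)$, Lemma~\ref{lem:L-prop}(ii) gives $\cR=\cN^\perp$, so it suffices to show $\langle f(0),\psi\rangle=\langle f'(0),\psi\rangle=0$ for all $\psi\in\cN$. Both functionals have the same structure. Differentiating the scalar prefactor $-2i(k+i\epsilon)\cos\theta_1e^{-i(k+i\epsilon)h\cos\theta_1}$ in $\epsilon$ gives again a scalar times $\int_0^{2\pi}\!\int_0^{2\pi}\overline{\psi(\tilde x,h)}\,d\tilde x=4\pi^2\overline{\psi^+_{(0,0)}}$. For $\psi\in\cN$, the representation \eqref{eq:evanescent} contains only modes with $|n+\alpha|>k$. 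Since $|\alpha|=k|\sin\theta_1|<k$, the index $n=0$ is absent, so $\psi^+_0=0$ and both pairings vanish.

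For part (ii), I compute $\langle L'(0)v,\psi\rangle$ by differentiating the formula for $L(\epsilon)$:
\[
\langle L'(0)v,\psi\rangle=\int_{Q_h}\bigl[2\tilde\theta\cdot\nabla_{\tilde x}v\,\overline\psi-2ik(q-\sin^2\theta_1)v\overline\psi\bigr]dx-4\pi^2i\sum_n\beta_n'(v_n^+\overline{\psi_n^+}+v_n^-\overline{\psi_n^-}),
\]
where $\beta_n'=\frac{d}{d\epsilon}\beta_n(k+i\epsilon)|_{\epsilon=0}=i\,\frac{k\cos^2\theta_1-n\cdot\tilde\theta}{\beta_n}$. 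This is well defined by Assumption~\ref{assump1}, since $\beta_n\neq0$.

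Suppose $v\in\cN$ and $PL'(0)v=0$. Then $\langle L'(0)v,v\rangle=0$, because $\cN$ is orthogonal to $\cR$ and $\langle L'(0)v,v\rangle=\langle PL'(0)v,v\rangle$. For $v\in\cN$ only evanescent modes occur, so $\beta_n=i|\beta_n|$. Using $|\beta_n|^2=|\alpha_n|^2-k^2$ and $k\cos^2\theta_1-n\cdot\tilde\theta=(k^2-\alpha_n\cdot\alpha)/k$, I expect the boundary sum, together with the $x_3$-integrals of the exponential tails, to combine with the volume terms. The result should be the analogue of the quadratic-form identity from Lemma~\ref{lem:L-prop}(iii), but now on all of $Q_\infty$:
\[
\langle L'(0)v,v\rangle=-\frac{d}{dk}\Bigl[\int_{Q_\infty}|\nabla v|^2-2ik\,\tilde\theta\cdot\nabla_{\tilde x}v\,\overline v-k^2(q-\sin^2\theta_1)|v|^2\,dx\Bigr]\cdot i.
\]
Concretely, I would extend $v$ to $Q_\infty$ by \eqref{eq:evanescent}, which lies in $H^1_\per(Q_\infty)$. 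The boundary DtN terms then equal the exterior integrals $\int_{|x_3|>h}[\ldots]$, and the $k$-derivative of the DtN symbol reproduces the $k$-derivative of the exterior form. So, in the notation $a=\langle Av,v\rangle$, $b=\langle Bv,v\rangle$, $c=\langle Cv,v\rangle$ (all real by self-adjointness), $\langle L'(0)v,v\rangle=-i(b+2kc)$. The relation $L_kv=0$ gives $a-kb-k^2c=0$.

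Now $\langle L'(0)v,v\rangle=0$ gives $b=-2kc$, and hence $a+k^2c=0$. Since $q\ge\sin^2\theta_1$, we have $c\ge0$ and $a\ge0$, so $a=0$ and therefore $v=0$, because $A$ is positive on $H^1_\per(Q_\infty)$.

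The main obstacle is the careful bookkeeping in the identification step. One must check that $\frac{d}{d\epsilon}$ of the DtN term, $-4\pi^2 i\beta_n'|v_n^\pm|^2$, exactly equals $i$ times the $k$-derivative of the exterior quadratic form $\int_{\pm x_3>h}(|n+\alpha|^2-k^2)|v_n|^2e^{-2|\beta_n|(|x_3|-h)}+\ldots$. This should follow from the one-dimensional identity $\int_h^\infty(|\partial_3 w|^2+|\beta_n|^2|w|^2)=|\beta_n||w(h)|^2$ for $w=e^{-|\beta_n|(x_3-h)}$, applied mode by mode with $\alpha=k\tilde\theta$ depending on $k$.
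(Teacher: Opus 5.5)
Your proposal is correct and follows essentially the same route as the paper. Part (i) uses Lemma~\ref{lem:L-prop}(iii) together with the vanishing of the zeroth Fourier coefficient of evanescent modes. Part (ii) identifies the differentiated DtN sum with the exterior integrals of $2\tilde\theta\cdot\nabla_{\tilde x}v\,\overline{v}-2ik\cos^2\theta_1|v|^2$; the paper verifies this directly from $\int_h^\infty e^{-2|\beta_n|(x_3-h)}dx_3=1/(2|\beta_n|)$. This yields $b+2kc=0$, which together with $a-kb-k^2c=0$ gives $a+k^2c=0$ and hence $v=0$.

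One harmless slip: since $\frac{d}{d\epsilon}F(k+i\epsilon)=iF'(k)$, your heuristic display should carry the factor $+i$ rather than $-i$. The formula $\langle L'(0)v,v\rangle=-i(b+2kc)$ that you actually use is correct.
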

\textbf{Proof:} (i) By Lemma \ref{lem:L-prop} (iii),  $L(\epsilon)$ is invertible  and
thus $f(\epsilon)\in\mathcal{R}(L(\epsilon))$ for all $\epsilon>0$. On the other
hand,
we have $f(0)\in \mathcal{R}$, because by Lemma \ref{lem:L-prop} (i) and (ii), $f(0)$ is
orthogonal to $\mathcal{N}$ and $X$ admits the orthogonal decomposition
$X=\mathcal{N}\oplus\mathcal{R}$.
Since the null space $\mathcal{N}$ consists of evanescent wave modes only we conclude
\begin{eqnarray*}
\langle Pf(0),\psi\rangle_{H^1_\per(Q_h)} & = & \langle f(0),\psi\rangle_{H^1_\per(Q_h)}
= -2ik\cos\theta_1\,e^{-ikh \cos\theta_1}\int\limits_0^{2\pi}\int\limits_0^{2\pi}
\overline{\psi(\tilde{x}, h)}\, d\tilde x\ =\ 0\,, \\
\langle Pf^\prime(0),\psi\rangle_{H^1_\per(Q_h)} & = &
\langle f^\prime(0),\psi\rangle_{H^1_\per(Q_h)} \\
& = & 2\cos\theta_1(1-ikh\cos\theta_1)\,e^{-ikh\cos\theta_1}
\int\limits_0^{2\pi}\int\limits_0^{2\pi}\overline{\psi(\tilde{x}, h)}\,d\tilde x\
=\ 0
\end{eqnarray*}
for all $\psi\in\mathcal{N}$. Note that $\int_0^{2\pi}\int_0^{2\pi}
\psi(\tilde x,h)\,d\tilde x=4\pi^2\psi_{(0,0)}$ vanishes because $\psi$ is
evanescent. This implies $Pf(0)=Pf^\prime(0)=0$ and thus $f(0),f^\prime(0)
\in\mathcal{R}$.
\smalf
(ii) Simple calculations show for $v,\psi\in \mathcal{N}$ that
\begin{eqnarray}
\langle PL^\prime(0)v,\psi\rangle_{H^1_\per(Q_h)} & = &
\langle L^\prime(0)v,\psi\rangle_{H^1_\per(Q_h)}\ =\
\int\limits_{Q_h}\bigl[2[\tilde\theta\cdot\nabla_{\tilde{x}}v]\,
\overline{\psi}-2ik(q-\sin^2\theta_1)\,v\,\overline{\psi}\bigr]\,dx \nonumber \\
& & +\ 4\pi^2i\sum_{n\in \Z^2: |n+\alpha|>k}\frac{n\cdot\tilde\theta-k\cos^2\theta_1}
{\sqrt{|n+\alpha|^2-k^2}}\,\left(v^+_n\,\overline{\psi^+_n}+v^-_n\,\overline{\psi^-_n}
\right) \label{PK0}
\end{eqnarray}
where again $\alpha=k\tilde\theta=k\sin\theta_1\binom{\cos\theta_2}
{\sin\theta_2}$.
\smalf
It remains to justify the one-to-one property of the mapping $PL'(0)|_{\mathcal{N}}$
from $\mathcal{N}$ onto itself. First we observe that
$v,\psi\in \mathcal{N}$ can be extended into $Q_\infty\setminus Q_h$ by the Rayleigh
expansions
\begin{eqnarray*}
v(x) & = & \sum_{n\in \Z^2: |n+\alpha|>k}v_n^\pm\,
e^{\mp \sqrt{|n+\alpha|^2-k^2}(x_3\mp h)+i n\cdot \tilde{x}}\,,\quad \pm x_3>h\,, \\
\psi(x) & = & \sum_{n\in \Z^2: |n+\alpha|>k}\psi^\pm_n\,
e^{\mp \sqrt{|n+\alpha|^2-k^2}(x_3\mp h)+i n\cdot \tilde{x}},\,\quad \pm x_3>h\,.
\end{eqnarray*}
Now we compute the integral on the right hand of \eqref{PK0} but over the infinite
domain $U_h^+:=(Q_\infty\backslash Q_h)\cap \R^3_+$. Since $q\equiv 1$ in
$U_h^+$ we obtain
\begin{eqnarray*}
&&\int\limits_{U_h^+}\bigl[2[\tilde\theta\cdot\nabla_{\tilde{x}}v]\,
\overline{\psi}-2ik(q-\sin^2\theta_1)\,v\,\overline{\psi}\bigr]\,dx\ =\
2\int\limits_{U_h^+}\bigl[[\tilde\theta\cdot\nabla_{\tilde{x}}v]\,
\overline{\psi}-ik\cos^2\theta_1\,v\,\overline{\psi}\bigr]\,dx \\
& = & 8\pi^2 i\int\limits_h^\infty\sum_{n\in \Z^2: \,|n+\alpha|>k}v^+_n\,
\overline{\psi^+_n}\,[n\cdot\tilde\theta -k\cos^2\theta_1]\,
e^{-2\sqrt{|n+\alpha|^2-k^2}(x_3-h)}dx_3 \\
& = & 4\pi^2 i\sum_{n\in \Z^2: \,|n+\alpha|>k}v^+_n\,\overline{\psi^+_n}\,
\frac{n\cdot\tilde\theta-k\cos^2\theta_1}{\sqrt{|n+\alpha|^2-k^2}}\,.
\end{eqnarray*}
An analogous computation yields that the integral over $U^-_h:=
(Q_\infty\setminus Q_h)\cap \R^3_-$ gives
$$ \int\limits_{U^-_h}\bigl[2[\tilde\theta\cdot\nabla_{\tilde{x}}v]\,
\overline{\psi}-2ik(1-\sin^2\theta_1)\,v\,\overline{\psi}\bigr]\,dx\ =\
4\pi^2i\sum_{n\in \Z^2: |n+\alpha|>k}v^-_n\,\overline{\psi^-_n}\,
\frac{n\cdot\tilde\theta-k\cos^2\theta_1}{\sqrt{|n+\alpha|^2-k^2}}\,. $$
The previous two identities in combination with \eqref{PK0} yield
\begin{equation}\label{PK}
\langle PL^\prime(0)v,\psi\rangle_{H^1_\per(Q_h)}\ =\
2\int\limits_{Q_\infty}\bigl[[\tilde\theta\cdot\nabla_{\tilde{x}}v]\,\overline{\psi}-
ik(q-\sin^2\theta_1)\,v\,\overline{\psi}\bigr]\,dx
\end{equation}
for all $v,\psi\in\cN$.
\medlf
Assume now that $\langle PL^\prime(0)v,\cdot\rangle_{H^1_\per(Q_h)}$ vanishes
identically on $\cN$ for some $v\in\cN$. Then
$\langle PL^\prime(0)v,v\rangle_{H^1_\per(Q_h)}=0$ and thus
$$ \int\limits_{Q_\infty}[\tilde\theta\cdot\nabla_{\tilde{x}}v]\,\overline{v}\,dx\ =\
ik\int\limits_{Q_\infty}(q-\sin^2\theta_1)|v|^2dx\,, $$
i.e.
$$ \langle Bv,v\rangle_{H^1_\per(Q_h)}\ +\ 2k\,\langle Cv,v\rangle_{H^1_\per(Q_h)}\
=\ 0 $$
with the operators $B$ and $C$ from \eqref{eq:Op-B} and \eqref{eq:Op-C},
respectively. Since $v$ is evanescent and a solution of \eqref{eq:Hper} we conclude
by Green's theorem that also
$$ -\langle Av,v\rangle_{H^1_\per(Q_h)}\ +\ k\,\langle Bv,v\rangle_{H^1_\per(Q_h)}\
+\ k^2\langle Cv,v\rangle_{H^1_\per(Q_h)}\ =\ 0\,. $$
Eliminating the term with $B$ from the previous two equations yield
$\langle Av,v\rangle_{H^1_\per(Q_h)}+k^2\,\langle Cv,v\rangle_{H^1_\per(Q_h)}=0$ and
thus $v=0$ because $A$ is positive and $C$ is non-negative. This proves injectivity
of $PL^\prime(0)$ on $\cN$. \qed

\begin{remark} (i) If $q(x)\geq 1$ in $Q_h$, then the condition
$q(x)\geq\sin^2\theta_1$ in Lemma~\ref{lem:L-prop} (iii) and Lemma~\ref{lem:DL-prop} (ii)
is automatically satisfied.
\smalf
(ii) If $q(x)\leq\sin^2\theta_1$ on a part of $Q_h$ we do not expect the
existence of  {propagative wave vector}s. In the case of constant $q$, i.e. $q(x)=q_0$ with
$0<q_0<1$, there exist no  {propagative wave vector}s as it is well known. We repeat the
argument for the convenience of the reader. %\cblue{If $\alpha\in\R^2$ is a
 %{propagative wave vector} corresponding to $q\equiv q_0$ then its mode $\phi$ is evanescent.
%Since with $\phi$ also $\phi(\tilde x,x_3)+\phi(\tilde x,-x_3)$ is a mode we
%can assume that the mode is an even function with respect to $x_3$ and, therefore,
%has the form
%\ben
%\phi(x) & = & \sum_{|\alpha_n|>k} u_n\,e^{i\alpha_n\cdot\tilde x}\,
%e^{\mp|\beta_n|(x_3\mp 1)}\quad\mbox{for}\quad
%\pm x_3>1, \\
%\phi(x) & = &\sum_{|\alpha_n|>k} v_n e^{i\alpha_n\cdot\tilde x}
%\cos(\gamma_nx_3)\quad\mbox{for}\quad  |x_3|<1,
%\enn
%where $\gamma_n:=\sqrt{k^2q_0-|\alpha_n|^2}$ and $u_n, v_n\in \C$. Recall that
%$|\beta_n|=\sqrt{|\alpha_n|^2-k^2}$. Since $\phi$ and its derivative with respect to
%$x_3$ are continuous at $x_3=\pm 1$ we have the two equations
%$$ u_n-v_n\cos\gamma_n=0\quad\text{and}\quad
%-u_n\,|\beta_n|+v_n\,\gamma_n\sin\gamma_n=0\,, $$
%i.e. by solving for $v_n$,
%$$ v_n\bigl[|\beta_n|\cos\gamma_n-\gamma_n\sin\gamma_n\bigr]\ =\ 0\,. $$
%Now we use that $\gamma_n^2=k^2q_0-|\alpha_n|^2<k^2-|\alpha_n|^2<0$, i.e.
%$\gamma_n$ is purely imaginary and $|\beta_n|\cos\gamma_n-\gamma_n\sin\gamma_n=
%|\beta_n|\cosh|\gamma_n|+|\gamma_n|\sinh|\gamma_n|$ is real valued and
%positive. This implies $v_n=0$.}
%
%[$\phi(\tilde{x_3}, -x_3)$ is not a mode, because it does not satisfy the radiation condition !]
\cred{ We may expand the evanescent wave modes into the series
\ben
u(x)&=&\sum_{|\alpha_n|>k} u_n^\pm e^{i\alpha_n\cdot\tilde{x}\mp |\beta_n| (x_3\mp 1)}\quad\mbox{in}\quad \pm x_3>1, \\
u(x)&=&\sum_{|\alpha_n|>k} \left[v_n^+ e^{i\alpha_n\cdot\tilde{x}+ i\gamma_n x_3}+
v_n^- e^{i\alpha_n\cdot\tilde{x}-i\gamma_n x_3}\right]\quad\mbox{in}\quad  |x_3|<1,
\enn
where $\gamma_n:=\sqrt{k^2q_0-|\alpha_n|^2}$ and $u_n^\pm, v_n^\pm\in \C$. On $x_3=\pm 1$, the continuity of $u$ and $\partial_3 u$  yields the algebraic system
\ben
\begin{pmatrix}
	 1 & 0 & -e^{i\gamma_n} & -e^{-i\gamma_n} \\
	0 & 1  & -e^{-i\gamma_n} & -e^{i\gamma_n}\\
	-|\beta_n|  & 0 & -i\gamma_ne^{i\gamma_n} & i\gamma_ne^{-i\gamma_n} \\
	0 & |\beta_n|  & i\gamma_ne^{-i\gamma_n} & -i\gamma_ne^{i\gamma_n}
\end{pmatrix}
\begin{pmatrix}
	u_n^+ \\ u_n^- \\ v_n^+ \\ v_n^-
\end{pmatrix}=0,\quad \mbox{if}\quad |\alpha_n|<k.
\enn
 Since $q_0<1$, we have
$\gamma_n^2=k^2q_0-|\alpha_n|^2<k^2-|\alpha_n|^2<0$, i.e.,
$\gamma_n= i|\gamma_n|$ is purely imaginary.
Direct calculations gives the determinant of the coefficient matrix
\ben
(e^{-2|\gamma_n|}-e^{2|\gamma_n|})\, k^2(1-q_0)<0\quad\mbox{if}\quad |\alpha_n|<k.
\enn
Hence, the above linear system only admits the trivial solution $u_n^\pm=v_n^\pm=0$. This implies $u\equiv 0$, that is,  there is no evanescent wave modes if $q_0<1$.
}
\end{remark}

\subsection{Uniqueness and existence results}

Applying Lemma \ref{lem:sing-pert} we conclude that the unique solution $v(\epsilon)$
to \eqref{eq7} converges to $v$ in $X$ and the limiting function $v$ fulfils the
equations
\ben
L_k v\ =\ f_k\quad\mbox{and}\quad PL^\prime(0)v\ =\ 0\,.
\enn
The second equation provides an additional constraint on $v\in X$, that is (see
\eqref{PK}),
\begin{equation} \label{eq:orth-v}
\int\limits_{Q_\infty}\bigl[\tilde\theta\cdot\nabla_{\tilde{x}}v\,
\overline{\psi}\,dx\ -\ ik(q(x)-\sin^2\theta_1)\, v\,\overline{\psi}\bigr]\,dx\ =\ 0
\quad\text{for all }\psi\in \mathcal{N}=\mathcal{N}(L_k)
\end{equation}
where on the left hand side $v$ and $\psi$ are again extended to
$Q_\infty\setminus Q_h$ by their Rayleigh expansions.
Setting $u=e^{i\alpha\cdot \tilde{x}}v$ and $\phi=e^{i\alpha\cdot \tilde{x}}\psi$,
we return to the quasi-periodic setting to get
$$ \int\limits_{Q_\infty}[\tilde\theta\cdot\nabla_{\tilde{x}} u]\,\overline{\phi}\,dx\
=\ ik\int\limits_{Q_\infty}q(x)u\;\overline{\phi}\,dx\,\quad \mbox{for all }
\phi\in\mathcal{M}_k\,, $$
that is,
\begin{equation} \label{eq:orth-u}
\int\limits_{Q_\infty}\bigl[\tilde\theta\cdot\nabla_{\tilde{x}} u
-ikq(x) u\bigr]\,\overline{\phi}\,dx\ =\ 0\quad\mbox{for all }\phi\in\cM_k\,,
\end{equation}
where $\cM_k$ denotes the space of modes corresponding to the propagative wave
vector $\alpha=k\tilde\theta$, see \eqref{eq:mode}. We note that $\cM_k$
is finite dimensional.
\medlf
Well-posedness of acoustic scattering from bi-periodic inhomogeneous media is
summarized as follows.
\begin{theorem}\label{TH-LAP} Let $k>0$ be fixed and $\hat{\theta}\in \s^2_{-}$ be an
arbitrary incident direction. Set
$\alpha=k\tilde\theta=k\sin\theta_1(\cos\theta_2,\sin\theta_2)^\top$.
\begin{itemize}
\item[(i)] Then there exists a unique solution $u\in H^1_{\alpha,loc}(Q_\infty)$ such
that $u^{sc}:=u-u^{in}$ and $u^t$ satisfy the $\alpha$-quasi-periodic Rayleigh
expansions \eqref{eq:Rayleigh}, if $\alpha=k\tilde\theta$ is not a  {propagative wave vector}.
\item[(ii)] Suppose that $\alpha=k\tilde\theta$ is a  {propagative wave vector},
$q(x)\geq \sin^2\theta_1$ in $Q_h$ and that $|\alpha+n|\neq k$ for all
$n\in \Z^2$. Then the diffraction problem still admits a unique solution if the
total field is additionally required to fulfill the constraint condition
\be\label{or}
\int\limits_{Q_\infty}\bigl[\tilde\theta\cdot\nabla_{\tilde{x}}u\,-ikq\,u\bigr]\,
\overline{\phi}\,dx\ =\ 0\quad
\mbox{for all }\phi\in\mathcal{M}_k\,.
\en
\end{itemize}
\end{theorem}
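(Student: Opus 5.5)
Part (i) follows from the equivalences already established. By Lemma~\ref{lem:equiv}, solutions $u\in H^1_{\alpha,loc}(Q_\infty)$ of \eqref{eq:Helmholtz}, \eqref{eq:Rayleigh} correspond one-to-one to solutions $v=e^{-i\alpha\cdot\tilde x}u|_{Q_h}$ of the operator equation \eqref{eq7}. The correspondence runs through the periodic variational problem \eqref{eq:var_per}, with the extension to $Q_\infty$ by the Rayleigh expansions. If $\alpha$ is not a propagative wave vector, Lemma~\ref{lem:L-prop}(i) shows that $L_k$ is an isomorphism. Hence \eqref{eq7} has exactly one solution, and so does the original problem.

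For part (ii), existence comes from the limiting absorption construction. By Lemma~\ref{lem:L-prop}(iii), which applies because $q\ge\sin^2\theta_1$, the operator $L(\epsilon)=L_{k+i\epsilon}$ is invertible for $\epsilon>0$; let $v(\epsilon)$ be the unique solution. Assumption~\ref{assump1} makes $L(\epsilon)$ and $f(\epsilon)$ differentiable near $\epsilon=0$. Lemma~\ref{lem:L-prop}(ii) gives Riesz number one and the orthogonal decomposition $X=\cN\oplus\cR$. Lemma~\ref{lem:DL-prop} gives $f(0),f'(0)\in\cR$ and injectivity of $PL'(0)$ on $\cN$, which is bijectivity since $\cN$ is finite dimensional. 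These are exactly the hypotheses of the singular perturbation Lemma~\ref{lem:sing-pert}. It yields $v(\epsilon)\to v$ in $X$ with $L_kv=f_k$ and $PL'(0)v=0$.

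By \eqref{PK}, the condition $PL'(0)v=0$ is \eqref{eq:orth-v}. Multiplying by $e^{i\alpha\cdot\tilde x}$ turns it into \eqref{eq:orth-u}, i.e. \eqref{or}. In this step one checks that the extra term from $\nabla_{\tilde x}e^{i\alpha\cdot\tilde x}$ is $i\alpha\cdot\tilde\theta=ik\sin^2\theta_1$, which cancels the $-ik\sin^2\theta_1$ term. Extending $u=e^{i\alpha\cdot\tilde x}v$ by Rayleigh expansions then gives a solution of the scattering problem that satisfies the constraint. One point needs care: the integral over $Q_\infty$ converges even though $u$ contains propagating Rayleigh modes, since $\phi\in\cM_k$ is evanescent. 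The integral is understood with $v$ extended by its Rayleigh series, and \eqref{PK} was derived that way. The derivation of \eqref{PK} used only that $\psi$ is evanescent, not $v$: the cross terms are orthogonal over $\tilde x$ and only indices with $|n+\alpha|>k$ survive. So \eqref{PK} extends to $v\in X$ and $\psi\in\cN$.

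Uniqueness under the constraint is the main point. Let $u_1,u_2$ be two solutions satisfying \eqref{or}, and set $w=e^{-i\alpha\cdot\tilde x}(u_1-u_2)|_{Q_h}$. Then $L_kw=0$, so $w\in\cN$, and the difference of the constraints gives $\langle PL'(0)w,\psi\rangle=0$ for all $\psi\in\cN$. Here I reverse the translation above, using \eqref{PK} and the mode characterization in Lemma~\ref{lem:L-prop}(i). Lemma~\ref{lem:DL-prop}(ii) then forces $w=0$. The only delicate bookkeeping is that the constraint is linear in $u$ but $u$ contains $u^{in}$. This causes no trouble because the difference $u_1-u_2$ has no incident part and is itself an evanescent mode, so the constraint applied to it is meaningful and reduces exactly to the injectivity statement.
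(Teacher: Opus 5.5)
Your proposal is correct and follows the paper's proof. Part (i) goes through Lemma~\ref{lem:equiv} and the isomorphism property in Lemma~\ref{lem:L-prop}(i); existence in (ii) comes from Lemma~\ref{lem:sing-pert} with hypotheses supplied by Lemmas~\ref{lem:L-prop} and \ref{lem:DL-prop}; and uniqueness holds because the difference lies in $\cN$ and satisfies $PL'(0)w=0$, so Lemma~\ref{lem:DL-prop}(ii) applies. Your remark that \eqref{PK} only needs $\psi\in\cN$, so that it holds for the non-evanescent limit $v\in X$, makes explicit a step the paper uses tacitly when passing from $PL'(0)v=0$ to \eqref{eq:orth-v}.
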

\begin{proof} (i) By Lemma \ref{lem:L-prop}~(i) in combination with
Lemma~\ref{lem:equiv} the existence of $u\in H^1_{\alpha, loc}(Q_\infty)$ follows
from the Fredholm alternative and uniqueness holds true if $\alpha$ is not a propagative wave
vector.
\smalf
(ii) Let $\alpha$ be a  {propagative wave vector}. Existence of a solution is clear by the
limiting absorption principle shown abov (application of Lemma~\ref{lem:sing-pert}).
To show uniqueness we assume there are two solutions
$u^{(1)}$ and $u^{(2)}$ and set $w=u^{(1)}-u^{(2)}$ both satisfying \eqref{eq:orth-u}.
Then the restriction $v(x)=e^{-i\alpha\cdot\tilde{x}}w|_{Q_h}$ belongs to $\cN$ and
satisfies the relation \eqref{eq:orth-v}, that is,
$PL^\prime(0)v=0$. Applying Lemma \ref{lem:DL-prop} (ii) yields $v=0$ and thus
$w(x)=v(x) e^{i\alpha\cdot\tilde{x}}=0$.
\end{proof}
\begin{remark}(i) If $\alpha=k\tilde\theta$ is a  {propagative wave vector}, the general
solution to the diffraction problem is given as
\ben
u=\ u_0\ +\ \sum_{\ell=1}^m c_\ell\,\phi_{\ell}\quad\mbox{in}\;Q_\infty\,,\quad
c_\ell\in \C\,,
\enn
where $u_0\in H^1_{\alpha,loc}(Q_\infty)$ is a particular solution and
$\{\phi_{\ell}: \ell=1,2,\ldots,m\}\subset\mathcal{M}_k$ forms a
basis of the finite dimensional space $\mathcal{M}_k$ of modes. The constraint
\eqref{or} can be used to uniquely determine the coefficients $c_\ell$, and it does
not exclude guided waves.

(ii) Multiplying $k$ to the left hand side of \eqref{or}, we obtain an equivalent constraint condition as follows:
\ben
\int\limits_{Q_\infty}\bigl[\alpha\cdot\nabla_{\tilde{x}}u\,-ik^2q\,u\bigr]\,
\overline{\phi}\,dx\ =\ 0\quad
\mbox{for all }\phi\in\mathcal{M}_k\,.
\enn

\end{remark}

\section{Well posedness for an electromagnetic scattering problem}
\label{sec3}

\subsection{Problem formulation}

In this section we consider the half space
$\R_+^3=\{x\in\R^3 :  x_3>0\}$ instead of the full
space $\R^3$ and assume that $\R_+^3$ is filled with an inhomogeneous and
isotropic medium of electric permittivity  {$\epsilon=\epsilon(x)\geq\delta>0$,
magnetic permeability $\mu=\mu(x)\geq\delta>0$} and vanishing electric conductivity, which
sits on a perfectly conducting plate $\Gamma_0:=\{x\in\R^3: x_3=0\}$.
We assume that $\epsilon$ and $\mu$ are periodic with respect to $x_1$ and $x_2$ and,
without loss of generality, we take the periods to be $2\pi$ as in the scalar case treated
above. Furthermore, we assume that $\epsilon\equiv\epsilon_0>0$ and $\mu\equiv\mu_0>0$
 {for} $x_3>h$ for some $h>0$ and some constants $\epsilon_0,\mu_0>0$. The
electromagnetic wave propagation is governed by the time-harmonic Maxwell system (with the
time variation of the form $e^{-i\omega t}$ with frequency $\omega>0$)
\begin{equation}
\nabla\times\bE-i\omega\mu(x)\boldsymbol H=0,\quad\nabla\times\boldsymbol H+
i\omega \epsilon(x) \bE=0,
\end{equation}
where $\bE$ and $\boldsymbol H$ are the electric and magnetic fields,
respectively. In this section we denote the vectors in $\R^3$ in bold letters
(except of the variable $x\in\R^3$). Suppose that an electromagnetic plane wave
$(\bE^{in},\bH^{in})$ of the form
$$ \bE^{in}(x)=\bp\,e^{ikx\cdot\hat\btheta},\quad \bH^{in}(x)=
\bs\,e^{ikx\cdot \hat\btheta}, $$
is incident onto the inhomogeneous bi-periodic layer $Q:=\{x\in\R^3:
0<x_3<h\}$ from above. Here  $k=\omega\sqrt{\epsilon_0\mu_0}>0$ is the wavenumber
of the background homogeneous medium, $\hat\btheta:=(\sin\theta_1\cos\theta_2,
\sin\theta_1\sin\theta_2,-\cos\theta_1)^\top$ is the incident wave vector whose
direction is specified by $\theta_1\in(-\pi/2,\pi/2)$ and
$\theta_2\in(0,2\pi)$ and polarization vectors $\bp$ and $\bs$ satisfying
$\bp=\sqrt{\mu_0/\varepsilon_0}(\bs\times\hat\btheta)$ and $\bs\cdot\hat\btheta=0$.
Analogously to the acoustic case, we set $\alpha=k\tilde\theta$ with
$\tilde\theta:=\sin\theta_1\binom{\cos\theta_2}{\sin\theta_2}\in\R^2$. Obviously, the incident fields $\bE^{in}$ and
$\bH^{in}$ are \tcr{both} $\alpha$-quasi-periodic with respect to $\tilde{x}=(x_1,x_2)$,
that is,
$\bE^{in}(\tilde{x},x_{3})e^{-i\alpha\cdot\tilde{x}}$ and
$\boldsymbol H^{in}(\tilde{x},x_{3})e^{-i\alpha\cdot \tilde{x}}$ are
$2\pi$ periodic with respect to $x_1$ and $x_{2}$, respectively.
The diffraction of time-harmonic electromagnetic waves leads to the following
boundary value problem (here we eliminated the magnetic field from the system,
i.e. consider only the electric field $\bE$):

\begin{align}
\label{eq:odp-2}\nabla\times\bigl[\frac{\mu_0}{\mu(x)}\nabla\times\bE\bigr]-
k^2\, {\frac{\eps(x)}{\eps_0}}\bE=0 &\quad\text{in }\R_+^3, \\
\label{eq:odp-3}\bnu\times \bE=0 & \quad\text{for }x_2=0\,, \\
\label{eq:odp-4}\bE=\bE^{in}+\bE^{sc} & \quad\text{in }\R_+^3,
\end{align}

where
%$q(x)=\epsilon(x)/\epsilon_0$ is the refractive index,
$\bnu=(0,0,1)^\top$ is the unit normal at the boundary, and
$\bE^{sc}$ is the scattered electric field.

Similar to the Helmholtz equation, the above model still needs to be complemented
with an appropriate radiation condition of the scattered electric field
 {$\bE^{sc}$}. Here we assume that $\bE^{sc}$ fulfills the upward
Rayleigh expansion condition for the Maxwell system, i.e.
\be\label{eq:Rayleigh-Maxwell}
\bE^{sc}(x)=\sum_{n\in\Z^2}\bE_n
e^{i(\alpha_n\cdot\tilde{x}+\beta_nx_3)},\quad x_3>h,
\en
where again $\alpha_n=n+\alpha=n+k\tilde\theta\in\R^{2}$,
$\bE_{n}=(E_{n}^{(1)},E_{n}^{(2)},E_{n}^{(3)})^\top\in\mathbb{C}^{3}$
are constant vectors and $\beta_n=\beta_n(k)$ is given by \eqref{eqn:betaN},
i.e.
$$ \beta_n\ =\ \beta_n(k)\ :=\ \sqrt{k^2-|\alpha_n|^2}\ =\
\sqrt{k^2\cos^2\theta_1-2k\,n\cdot\tilde\theta-|n|^2}\,. $$
Throughout this section we keep $\tilde\theta$ fixed and assume again that
$\alpha=k\tilde\theta$ is not a cut-off vector, i.e.
\ben
\beta_n\neq 0\quad\mbox{for all}\quad n\in \Z^2.
\enn
From the Maxwell equations, the scattered electric is divergence free above the
inhomogeneous media, that is, $\ddiv\bE^{sc}=0$  {for} $x_3>h$. Hence the coefficients
$\bE_n$ satisfy
$$ \alpha_n\cdot\binom{E_n^{(1)}}{E_n^{(2)}}\ +\ \beta_n\,E_n^{(3)}\ =\ 0\quad
\mbox{for all }n\in \Z^2. $$

\begin{figure}[h]
\vspace*{-1.2cm}
\centering
\hspace{-2cm}\includegraphics[width=10cm,height=8cm]{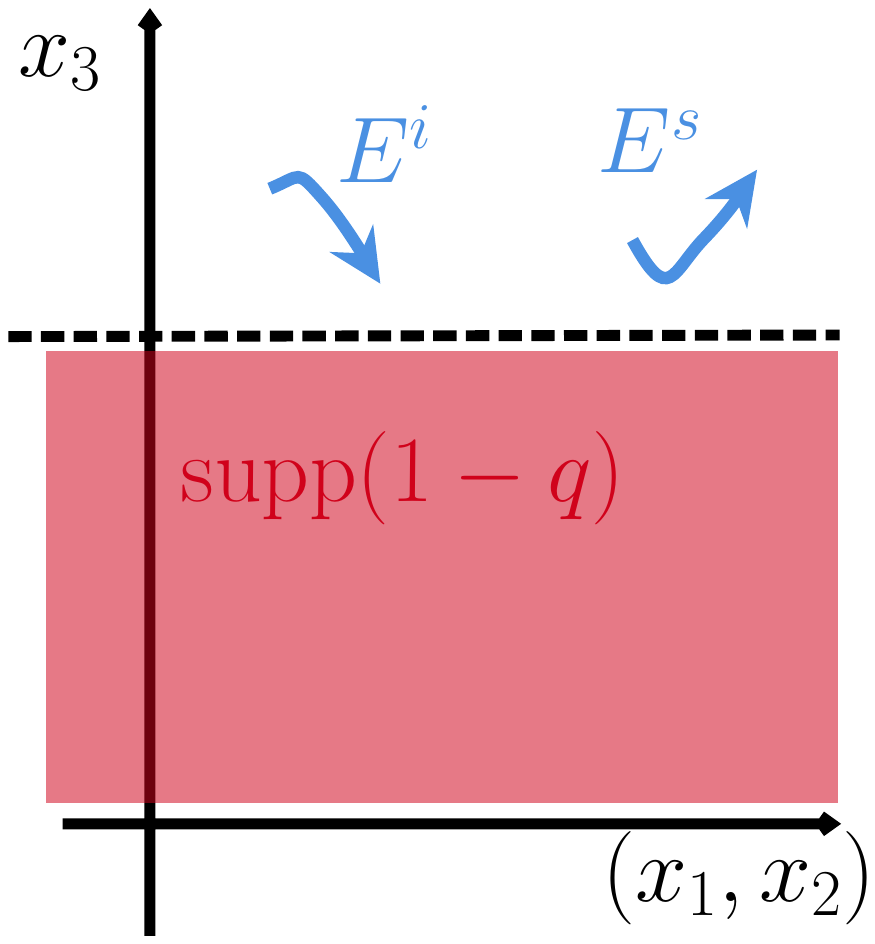}
\vspace*{-1.8cm}
\caption{Diffraction of an electromagnetic plane wave from a bi-periodic
inhomogeneous layer in $x_3>0$.}
\end{figure}

First, we recall the notation $\Gamma_h=\{(\tilde{x},h)\in \R^3 : 0<x_1,x_2<2\pi\}$
and modify the notations $Q_h$ and $Q_\infty$  {from Section~2} into
$$ Q_h\ :=\ (0,2\pi)^2\times(0,h)\quad\text{and}\quad Q_\infty\ :=\ (0,2\pi)^2
\times(0,\infty)\,, $$
respectively. The solution $\bE$ has to be in a Sobolev space accociated with the
quasi-periodic Maxwell system. As in the scalar case it it is sufficient to introduce
the spaces of periodic functions. For the corresponding $\alpha$-quasi-periodic spaces
(for any $\alpha\in\R^2$) the functions have to be multiplied by the phase factor
$e^{i\alpha\cdot\tilde x}$. For example,
$$ H_\alpha(\curl,Q_h)\ =\ \bigl\{e^{i\alpha\cdot\tilde x}\bu:\bu\in
H_\per(\curl,Q_h)\bigr\} $$
where $H_\per(\curl,Q_h)$ is the completion of
$$ C^\infty_\per(Q_h)^3\ =\ \bigl\{ \bu\in C^\infty(Q_h)^3:\bu
\text{ is $2\pi-$periodic with respect to $x_1$ and $x_2$} \bigr\} $$
with respect to the norm
$$ \Vert\bu\Vert_{H(\curl,Q_h)}\ :=\ \bigl[\Vert\bu\Vert^2_{L^2(Q_h)}+
\Vert\nabla\times\bu\Vert^2_{L^2(Q_h)}\bigr]^{1/2}\,, $$
see, e.g., \cite{GP22}. The corresponding local space $H_{\alpha,\loc}(\curl,Q_\infty)$
is defined as
$$ H_{\alpha,\loc}(\curl,Q_\infty)\ :=\ \bigl\{ \bu:\bu|_{Q_h}\in
H_\alpha(\curl,Q_h)\text{ for all }h>0 \bigr\}\,. $$
Since we have to include the boundary condition $\bnu\times\bu=0$ for $x_3=0$ we set
$H_{\alpha,0}(\curl,Q_h)\ :=\ \bigl\{ \bu\in H_\alpha(\curl,Q_h): \bnu\times\bu=0
\text{ for }x_3=0 \bigr\}$ and analogously, $H_{\alpha,\loc,0}(\curl,Q_\infty)$.
We note that the trace $\bnu\times\bu$ exists as an element in
$H^{-1/2}(\Div,\Gamma_0)$.
\smalf
Then $\bE\in H_{\alpha,\loc,0}(\curl,Q_\infty)$ is a (variational) solution of
\eqref{eq:odp-2}, \eqref{eq:odp-3} if
$$ \int\limits_{Q_\infty}\bigl[\frac{\mu_0}{\mu}(\nabla\times\bE)\cdot(\nabla\times\overline{\bpsi})-
k^2\, {\frac{\eps}{\eps_0}}\,\bE\cdot\overline{\bpsi}\bigr]dx\ =\ 0 $$
for all $\bpsi\in H_{\alpha,0}(\curl,Q_\infty)$ which vanish for $x_3>h$ for some $h>0$.

The boundary spaces $H^s_\alpha(\Gamma_h)$, $H^s_\alpha(\Div,\Gamma_h)$, and
$H^s_\alpha(\Curl,\Gamma_h)$ on $\Gamma_h$ are defined by
\begin{eqnarray*}
H^s_\alpha(\Gamma_h) & = & \biggl\{ \bu(\tilde{x})=\sum_{n\in\Z^2}
\bu_ne^{i\alpha_n\cdot\tilde{x}}:\bu_n\in\mathbb{C}^3\,,\ \bnu\cdot\bu_n=0,\
\|\bu\|_{H^s_\alpha(\Gamma_h)}<\infty\biggr\}\,, \\
H_\alpha^s(\Div,\Gamma_h) & = & \biggl\{\bu\in H^s_\alpha(\Gamma_h):
\|\bu\|_{H^s_\alpha(\Div,\Gamma_h)}<\infty\biggr\}\,, \\
H_\alpha^s(\Curl,\Gamma_h) & = & \biggl\{\bu\in H^s_\alpha(\Gamma_h):
\|\bu\|_{H^s_\alpha(\Curl,\Gamma_h)}<\infty\biggr\}\,,
\end{eqnarray*}
where the norms are defined as
\begin{eqnarray*}
\|\bu\|_{H^s(\Gamma_h)}^2 & := & \sum_{n\in\Z^2}(1+|n|^2)^s|\bu_n|^2\,, \\
\|\bu\|_{H^s(\Div,\Gamma_h)}^2 & := & \sum_{n\in\Z^2}
(1+|n|^2)^s(|\bu_n|^2+|\bu_n\cdot\bn|^2)\,, \\
\|\bu\|_{H^s(\Curl,\Gamma_h)}^2 & := & \sum_{n\in\Z^2}(1+|n|^2)^s
(|\bu_n|^2+|\bu_n\times\bn|^2)\,,
\end{eqnarray*}
with $\bn=(n_1,n_2,0)^\top\in\Z^3$. For $\alpha=(0,0)$ we denote the
spaces by $H^s_\per(\Gamma_h)$, $H^s_\per(\Div,\Gamma_h)$, and
$H^s_\per(\Curl,\Gamma_h)$, respectively.
We equip the spaces $H^s_\alpha(\Gamma_h)$, $H^s_\alpha(\Div,\Gamma_h)$, and
$H^s_\alpha(\Curl,\Gamma_h)$ with the same norms as for the spaces of periodic
functions.
\smalf
The case $s=-1/2$ is of particular importance because
$H_\alpha^{-1/2}(\Div,\Gamma_h)$ and  $H_\alpha^{-1/2}(\Curl,\Gamma_h)$ are the
trace spaces of $H_\alpha(\curl,Q_h)$, i.e. the trace operator
$\bu\mapsto\bnu\times\bu|_{\Gamma_h}$ is well defined and bounded from
$H_\alpha(\curl,Q_h)$ into $H_\alpha^{-1/2}(\Div,\Gamma_h)$. Since
$\bnu\times\bv\in H_\alpha^{-1/2}(\Curl,\Gamma_h)$ for $\bv\in
H_\alpha^{-1/2}(\Div,\Gamma_h)$ the trace operator
$\bu\mapsto(\bnu\times\bu)\times\bnu|_{\Gamma_h}$ is well defined and bounded from
$H_\alpha(\curl,Q_h)$ into $H_\alpha^{-1/2}(\Curl,\Gamma_h)$.
Furthermore, the spaces $H^{-1/2}_\alpha(\Div,\Gamma_h)$ and
$H^{-1/2}_\alpha(\Curl,\Gamma_h)$ are dual to each other and form a dual system
with dual form
$\langle\cdot,\cdot\rangle:H^{-1/2}_\alpha(\Curl,\Gamma_h)\times
H^{-1/2}_\alpha(\Div,\Gamma_h)\to\C$ which is the extension of
$$ \langle\bu,\bv\rangle\ =\ \sum_{n\in\Z^2}\bu_n\cdot\overline{\bv_n}\quad
\text{for }\bu,\bv\in H^1_\alpha(\Gamma_h)\,. $$
To reduce the scattering problem to a bounded periodic cell, we need to introduce
the quasi-periodic Calderon map (which is the analog of the Dirichlet-to-Neumann
map) $\widetilde\cT_\alpha:H_\alpha^{-1/2}(\Div,\Gamma_h)\to
H_\alpha^{-1/2}(\Curl,\Gamma_h)$ on the artificial boundary $\Gamma_h$  by
\begin{equation}\label{T}
\widetilde\cT_\alpha\bv\ =\
\bigl(\bnu\times(\nabla\times\bu)\bigr)\times\bnu\quad\text{ on }\Gamma_h\quad
\text{for }\bv\in H_\alpha^{-1/2}(\Div,\Gamma_h)
\end{equation}
where $\bu$ is the unique $\alpha$-quasi-periodic solution to the
boundary problem
\begin{equation}\label{TB} \nabla\times\nabla\times\bu-k^2\bu=0\quad\mathrm{for~}x_3>h,\quad\bnu\times
\bu=\bv\quad\text{ on }\Gamma_h\,,
\end{equation}
which also satisfies the upward Rayleigh expansion condition \eqref{eq:Rayleigh-Maxwell}. The map
$\widetilde\cT_\alpha$ is well defined if $\beta_n\neq 0$ for all $n\in \Z^2$. Below
we collect some properties of the Calderon map $\widetilde\cT_\alpha$ for general
$\alpha\in\R^2$. For notational convenience we set $\hat{\balpha}:=(\alpha,0)
\in \R^3$ and $\hat{\balpha}_n:=(\alpha_n, 0)\in \R^3$.
\begin{lemma} \label{Prop-Calderon}
Let $\bv(\tilde{x})=\sum_{n\in\Z^2}\bv_n\,e^{i\alpha_n\cdot\tilde{x}}\in
H_\alpha^{-1/2}(\Div,\Gamma_h)$ and suppose $|\alpha_n|\not=k$ for all $n\in\Z^2$.
\begin{itemize}
\item[(i)] $\widetilde\cT_\alpha:H_\alpha^{-1/2}(\Div,\Gamma_h)\to
H_\alpha^{-1/2}(\Curl,\Gamma_h)$ is well-defined and bounded and takes the explicit
form
\begin{equation*}
(\widetilde\cT_\alpha\bv)(\tilde{x})=i\sum_{n\in\Z^2}\frac1{\beta_n}
\bigl[k^2\bv_n-(\hat{\balpha}_n\cdot\bv_n)\hat{\balpha}_n]
e^{i\alpha_n\cdot\tilde{x}}.
\end{equation*}
\item[(ii)] The Calderon operator satisfies
\begin{eqnarray*}
\Re\langle\widetilde\cT_\alpha\bv,\bv\rangle & = & 4\pi^2\sum_{|\alpha_n|>k}
\frac1{|\beta_n|}\bigl[k^2|\bv_n|^2-|\hat{\balpha}_n\cdot\bv_n|^2
\bigr]\,, \\
\Im\langle\widetilde\cT_\alpha\bv,\bv\rangle & = & 4\pi^{2}\sum_{|\alpha_n|<k}
\frac{1}{\beta_n}\bigl[k^{2}|\bv_n|^2-|\hat{\balpha}_n\cdot\bv_n|^2]\
\geqslant\ 0\,,
\end{eqnarray*}
for $\bv\in H_\alpha^{-1/2}(\Div,\Gamma_h)$ where $C_1$ and $C_2$ are positive
constants, and \\ $\langle\cdot,\cdot\rangle:H_\alpha^{-1/2}(\Curl,\Gamma_h)\times
H_\alpha^{-1/2}(\Div,\Gamma_h)\to\C$ denotes the dual form.
\item[(iii)] For $p\in H^1_\alpha(Q_h)$ with $p(x)=\sum_{n\in\Z^2}p_n(x_3)\,
e^{i\alpha_n\cdot\tilde x}$ we have
$$ \Re\bigl\langle\widetilde\cT_\alpha(\bnu\times\nabla p),\bnu\times\nabla p
\bigr\rangle\ =\ 4\pi^2k^2\sum_{|\alpha_n|>k}\frac1{|\beta_n|}|\alpha_n|^2|p_n(h)|^2\
\geq\ 0\,. $$
\end{itemize}
\end{lemma}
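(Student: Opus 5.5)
The plan is to solve the exterior problem \eqref{TB} explicitly, mode by mode, and read off the Calderon map from the resulting Fourier series. Expand the radiating solution as
$\bu(x)=\sum_{n}\bu_n e^{i(\alpha_n\cdot\tilde x+\beta_n(x_3-h))}$ for $x_3>h$, with constant vectors $\bu_n\in\C^3$. Set $\hat\bq_n:=\hat\balpha_n+\beta_n\bnu=(\alpha_n,\beta_n)$, so that $\hat\bq_n\cdot\hat\bq_n=k^2$. The equation $\nabla\times\nabla\times\bu-k^2\bu=0$ together with $\ddiv\bu=0$ (obtained by taking the divergence, since $k\neq0$) reduces to the condition $\hat\bq_n\cdot\bu_n=0$. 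The boundary condition gives $\bnu\times\bu_n=\bv_n$; here $\bv_n$ is tangential, and the tangential part of $\bu_n$ equals $\bv_n\times\bnu$. The normal component $u_n^{(3)}$ is then fixed by $\beta_n u_n^{(3)}=-\hat\balpha_n\cdot(\bv_n\times\bnu)$, which is solvable precisely because $\beta_n\neq0$ by assumption. This construction also gives uniqueness of the solution of \eqref{TB}.

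Next compute $\nabla\times\bu$ on $\Gamma_h$. Mode by mode it equals $i\hat\bq_n\times\bu_n$, and its tangential part $(\bnu\times(i\hat\bq_n\times\bu_n))\times\bnu$ is expanded with the BAC--CAB rule. Substituting $u_n^{(3)}$ and using $|\alpha_n|^2+\beta_n^2=k^2$, a short algebraic computation should collapse the expression to $\frac{i}{\beta_n}[k^2\bw_n-(\hat\balpha_n\cdot\bw_n)\hat\balpha_n]$, with $\bw_n$ the appropriate tangential vector. This is the main bookkeeping step: one must fix the convention for whether $\bv_n$ or $\bv_n\times\bnu$ appears, so that the stated formula (i) holds in the paper's identification of the dual spaces, and I expect sign and rotation errors here to be the main obstacle.

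Boundedness from $H^{-1/2}_\alpha(\Div)$ to $H^{-1/2}_\alpha(\Curl)$ then follows from $|\beta_n|\sim|n|$ for large $n$ and $\inf_n|\beta_n|>0$. For the first term, $k^2|\bv_n|^2/|\beta_n|\lesssim(1+|n|^2)^{-1/2}|\bv_n|^2$. For the second term, one uses $(\hat\balpha_n\cdot\bv_n)\hat\balpha_n\times\bn\approx 0$ up to lower order, and the norm of $(\hat\balpha_n\cdot\bv_n)\hat\balpha_n/\beta_n$ is controlled by $(1+|n|^2)^{-1/2}|\bv_n\cdot\bn|^2$ plus lower-order terms, since $\hat\balpha_n=\bn+\hat\balpha$.

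For (ii), pair the series with $\bv$: $\langle\widetilde\cT_\alpha\bv,\bv\rangle=4\pi^2 i\sum_n\beta_n^{-1}[k^2|\bv_n|^2-|\hat\balpha_n\cdot\bv_n|^2]$, where the factor $4\pi^2$ comes from the $L^2$ pairing of exponentials on the period cell. For $|\alpha_n|<k$, $\beta_n>0$ is real and the term is purely imaginary. For $|\alpha_n|>k$, $\beta_n=i|\beta_n|$, so $i/\beta_n=1/|\beta_n|$ and the term is real. Nonnegativity of the imaginary part follows from Cauchy--Schwarz, $|\hat\balpha_n\cdot\bv_n|\le|\alpha_n||\bv_n|<k|\bv_n|$.

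For (iii), take $\bv=\bnu\times\nabla p$ on $\Gamma_h$. Its Fourier coefficients are $\bv_n=i p_n(h)\,\bnu\times\hat\balpha_n$, so $\hat\balpha_n\cdot\bv_n=0$ and $|\bv_n|^2=|\alpha_n|^2|p_n(h)|^2$. Inserting this into the real part from (ii) gives the stated formula, and it is nonnegative. If the convention chosen in (i) places $\bv_n\times\bnu$ in the formula instead, the same orthogonality $\hat\balpha_n\cdot(\bnu\times\hat\balpha_n)=0$ or its rotated analogue is what must be checked, and I would verify this once the convention in (i) is fixed.
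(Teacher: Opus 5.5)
Your proposal is correct and follows the paper's route: the paper likewise derives the explicit form of $\widetilde\cT_\alpha$ mode by mode from the definition \eqref{T} (using $\beta_n\neq 0$), and reads off (ii) and (iii) from the sign of $i/\beta_n$ and from $\hat\balpha_n\cdot(\bnu\times\hat\balpha_n)=0$. The convention question you flag resolves with no rotation: the tangential part of $i\hat\bq_n\times\bu_n$ is $\frac{i}{\beta_n}\bigl[\beta_n^2\bv_n+(\bv_n\cdot\boldsymbol{w}_n)\boldsymbol{w}_n\bigr]$ with $\boldsymbol{w}_n=\bnu\times\hat\balpha_n$, and for tangential $\bv_n$ one has $(\bv_n\cdot\boldsymbol{w}_n)\boldsymbol{w}_n=|\alpha_n|^2\bv_n-(\hat\balpha_n\cdot\bv_n)\hat\balpha_n$, which gives exactly the stated formula in terms of $\bv_n$ itself.
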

\cred{In the first assertion, the expression of  $\widetilde\cT_\alpha$ can be derived straightforwardly from its definition \eqref{T}, which makes sense only if $|\alpha_n|\neq k$, that is, $\beta_n\neq 0$ for all $n \in \Z^2$. If  $|\alpha_n|= k$ for some $n\in \Z^2$, the Dirichlet boundary value problem \eqref{TB} over the half plane $x_3>h$  is not uniquely solvable and thus the operator  $\widetilde\cT_\alpha$ is not well defined. The real and imaginary parts of $\widetilde\cT_\alpha$ together with their properties follow directly from the explicit form of $\widetilde\cT_\alpha$. }
With the help of the Calderon map, the scattering problem can be equivalently
transformed into the following boundary value problem on the truncated domain $Q_h$.
Here, we allow general $\alpha\in\R^2$ but assume that the incident field $\bE^{in}$
is $\alpha$-quasi-periodic. If $\bE^{in}$ is a plane wave of the form
$\bE^{in}(x)=\bp\,e^{ikx\cdot\hat\btheta}$ then, of course, $\alpha$ has the form
$\alpha=k\tilde\theta$.

\begin{lemma} \label{lem:equiv-Maxwell}
Let $\bE^{in}$ be an $\alpha$-quasi-periodic solution of
\eqref{eq:odp-2} for \cred{ $\mu\equiv \mu_0$ and $\epsilon\equiv \epsilon_0$.}
\smalf
(a) Let $\bE\in H_{\alpha,\loc}(\curl,Q_\infty)$ be a solution of
\eqref{eq:odp-2}--\eqref{eq:odp-4} satisfying the Rayleigh expansion
\eqref{eq:Rayleigh-Maxwell}. Then $\bE|_{Q_h}\in H_\alpha(\curl,Q_h)$ solves
\begin{align}
\nabla\times\bigl[\frac{\mu_0}{\mu}\nabla\times\bE\bigr]-
k^2\, {\frac{\eps}{\eps_0}}\bE&=0\quad\text{in }Q_h\,,
\label{eq:dp-qp1} \\
\bnu\times\bE & =0\quad\text{on }\Gamma_0\,, \label{eq:vf-qp2} \\
(\nabla\times\bE)_T-\widetilde\cT_\alpha(\bnu\times\bE) & =
(\nabla\times\bE^{in})_T-\widetilde\cT_\alpha(\bnu\times\bE^{in})\quad\text{on }
\Gamma_h\,, \label{eq:dp-qp3}
\end{align}
where $\bu_T:=(\bnu\times\bu)\times\bnu$ denotes the trace on $\Gamma_h$ of the
tangential component of $\bu$.
\smalf
(b) Let $\bE\in H_\alpha(\curl,Q_h)$ \tcr{solve
(\ref{eq:dp-qp1})--(\ref{eq:dp-qp3})}. Extend $\bE$ to
$\bE_0=\bE^{in}+\bE_{0,sc}\in H_{\alpha,\loc}(\curl,Q_\infty)$ by setting
$\bE_{0,sc}=\bE^{sc}\tcr{=(\tilde\bE_{0,sc}, \bE^{(3)}_{0,sc})}$ in $Q_h$ with
\begin{equation} \label{eq:ext-Maxwell}
\begin{split}
\tilde\bE_{0,sc}(x)\ =\ & (\bE_{0,sc}^{(1)},\bE_{0,sc}^{(2)})^\top\ =\
\sum_{n\in\Z^2}\tilde\bE_n e^{i\alpha_n\cdot\tilde x+\beta_n(x_3-h))},\quad
\tcr{x_3>h}\,, \\
\bE_{0,sc}^{(3)}(x)\ =\ & -\sum_{n\in\Z^2}\frac{1}{\beta_n}\,
[n+k\tilde\theta]\cdot\tilde\bE_n\,
e^{i(\alpha_n\cdot\tilde x+\beta_n(x_3-h))},\quad \tcr{x_3>h}\,,
\end{split}
\end{equation}
where $\tilde\bE_n=\frac{1}{4\pi^2}\int_{\Gamma_{h}}\tilde\bE^{sc}\,
e^{-in\cdot\tilde x}ds$ are the Fourier coefficients of the tangential components of
$\bE^{sc}|_{\Gamma_{h}}=(\bE-\bE^{in})|_{\Gamma_{h}}$. Then
$\bE_0\in H_{\alpha,\loc}(\curl,Q_\infty)$ is a solution of
\eqref{eq:odp-2}--\eqref{eq:odp-4} satisfying the Rayleigh expansion
\eqref{eq:Rayleigh-Maxwell}.
\end{lemma}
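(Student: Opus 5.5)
The proof follows the scalar Lemma~\ref{lem:equiv}: integrate by parts in $Q_h$ and identify the boundary term on $\Gamma_h$ through the Calderon map. Note that $\bE^{in}$ itself satisfies the homogeneous-medium Maxwell equation for $x_3>h$, but it is \emph{not} upward radiating. So $\widetilde\cT_\alpha(\bnu\times\bE^{in})$ is not $(\nabla\times\bE^{in})_T$ in general, and this mismatch is exactly what produces the right-hand side of \eqref{eq:dp-qp3}.

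\textbf{Part (a).} Equation \eqref{eq:dp-qp1} and the boundary condition \eqref{eq:vf-qp2} are just the restrictions of \eqref{eq:odp-2} and \eqref{eq:odp-3} to $Q_h$. Since $\mu=\mu_0$ near $\Gamma_h$, the tangential trace $(\nabla\times\bE)_T$ on $\Gamma_h$ is well defined in $H^{-1/2}_\alpha(\Curl,\Gamma_h)$, and it agrees from both sides because $\bE\in H_{\alpha,\loc}(\curl)$ solves the equation across $\Gamma_h$. For $x_3>h$ the scattered field $\bE^{sc}=\bE-\bE^{in}$ satisfies $\nabla\times\nabla\times\bE^{sc}-k^2\bE^{sc}=0$ together with the upward Rayleigh expansion \eqref{eq:Rayleigh-Maxwell}. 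Because $\beta_n\neq0$ for all $n$, the problem \eqref{TB} has a unique solution, so $\bE^{sc}$ is the field generated by the datum $\bv=\bnu\times\bE^{sc}|_{\Gamma_h}$. The definition \eqref{T} then gives $(\nabla\times\bE^{sc})_T=\widetilde\cT_\alpha(\bnu\times\bE^{sc})$ on $\Gamma_h$. Substituting $\bE^{sc}=\bE-\bE^{in}$ and using linearity yields \eqref{eq:dp-qp3}.

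\textbf{Part (b).} Define $\bE_0$ by the formulas \eqref{eq:ext-Maxwell} above $\Gamma_h$; the third component is chosen so that $\ddiv\bE_{0,sc}=0$. Each term $\bE_ne^{i(\alpha_n\cdot\tilde x+\beta_n(x_3-h))}$ is then a divergence-free plane wave, so $\bE_{0,sc}$ satisfies $\nabla\times\nabla\times\bE_{0,sc}-k^2\bE_{0,sc}=0$ for $x_3>h$ and has the form \eqref{eq:Rayleigh-Maxwell}. Convergence in $H_{\alpha,\loc}(\curl)$ for $x_3>h$ follows from the decay $e^{-|\beta_n|(x_3-h)}$ with $|\beta_n|\sim|n|$, combined with the $H^{-1/2}(\Div)$ regularity of the trace. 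By construction the tangential traces $\bnu\times\bE_0$ from the two sides of $\Gamma_h$ coincide. Since $\bE_{0,sc}$ is the radiating solution of \eqref{TB} with datum $\bnu\times\bE^{sc}$, its curl trace from above is $\widetilde\cT_\alpha(\bnu\times\bE^{sc})$. By \eqref{eq:dp-qp3} this equals $(\nabla\times\bE)_T-(\nabla\times\bE^{in})_T$ computed from below. Hence $(\nabla\times\bE_0)_T$ is continuous across $\Gamma_h$ as well.

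The main obstacle is converting these matched traces into the weak formulation on $Q_\infty$. Take a test function $\bpsi$ with bounded support in $x_3$ and split the integral into a part over $Q_h$ and a part over $Q_\infty\setminus Q_h$. Apply the quasi-periodic Green formula for $H(\curl)$ on each piece. The lateral boundary contributions cancel by $\alpha$-quasi-periodicity for real $\alpha$: the factor $e^{i\alpha\cdot\tilde x}\overline{e^{i\alpha\cdot\tilde x}}=1$ makes the integrands periodic. The two $\Gamma_h$ terms are dual pairings of the form $\langle(\nabla\times\cdot)_T,\bnu\times\bpsi\rangle$ between $H^{-1/2}_\alpha(\Curl)$ and $H^{-1/2}_\alpha(\Div)$, and they cancel by the trace continuity established above. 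The boundary term at $x_3=0$ vanishes because $\bnu\times\bpsi=0$ there. Finally, $\bE^{in}$ solves the homogeneous equation with $\mu=\mu_0$, $\epsilon=\epsilon_0$ in the region $x_3>h$, so combining the pieces gives the weak form of \eqref{eq:odp-2}–\eqref{eq:odp-4} for $\bE_0$.
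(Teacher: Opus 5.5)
Your proof is correct and follows the same route as the paper. The paper's argument is only that (a) is the definition of $\widetilde\cT_\alpha$ applied to the radiating field $\bE^{sc}$, and that in (b) the continuity of $\bnu\times\bE_0$ across $\Gamma_h$ together with \eqref{eq:dp-qp3} gives continuity of the tangential trace of the curl; you supply the details the paper leaves implicit (uniqueness for \eqref{TB} when $\beta_n\neq0$, $H(\curl)$-regularity of the extension, and the Green's formula with cancelling lateral terms).

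One small caveat: the hypothesis only gives $\mu\equiv\mu_0$ for $x_3>h$, not near $\Gamma_h$ from below. So instead of assuming $\mu=\mu_0$ there, read $(\nabla\times\bE)_T$ from below as the tangential trace of $\frac{\mu_0}{\mu}\nabla\times\bE$, which is exactly the quantity your Green's-formula step produces.
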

\cred{The assertion (a) is a consequence of the definition of  the Calderon map $\widetilde\cT_\alpha$. In the assertion (b), the continuity of the tangential trace of $\bE_{0,sc}$ across $\Gamma_h$ guarantees the continuity of the tangential trace of $\nabla\times \bE_{0,sc}$, due to the boundary condition \eqref{eq:dp-qp3}.}
The variational formulation for the problem (\ref{eq:dp-qp1})--(\ref{eq:dp-qp3}) is
to find $\bE\in H_{\alpha,0}(\curl,Q_h)$ such that
\begin{equation}\label{qpv}
\tilde b_\alpha(\bE,\bpsi)\ =\ \tilde g_\alpha(\bpsi)\quad\mbox{for all }
\bpsi\in H_{\alpha,0}(\curl,Q_h)\,, \quad
\end{equation}
where
\begin{align*}
\tilde b_\alpha(\bE,\bpsi)&:=\int_{Q_h}\bigl[\frac{\mu_0}{\mu}(\nabla\times\bE)
\cdot(\nabla\times\overline{\bpsi})-k^2\, {\frac{\eps}{\eps_0}}\,
\bE\cdot\overline{\bpsi}\bigr]dx-
\int_{\Gamma_h}\widetilde\cT_\alpha(\bnu\times\bE)\cdot(\bnu\times\overline{\bpsi})
\,ds\,, \\
\tilde g_\alpha(\bpsi) &:=\int_{\Gamma_h}\bigl[(\nabla\times\bE^{in})_T-
\widetilde\cT_\alpha(\bnu\times\bE^{in})\bigr]\cdot(\bnu\times\overline{\bpsi})\,ds
\end{align*}
for $\bE,\bpsi\in H_{\alpha,0}(\curl,Q_h)$. Note that the integrals on $\Gamma_h$
are actually dual forms!
\begin{definition}
\tcr{Let $\alpha\in\R^2$ be arbitrarily fixed}. $\alpha\in\R^2$ is called a  {propagative wave vector} if the boundary value problem
(\ref{eq:odp-2})--(\ref{eq:Rayleigh-Maxwell}) with $\bE^{in}=0$
admits at least one non-trivial solution $\bE$ in $H_{\alpha,\loc}(\curl,Q_\infty)$
which are called modes. The space of all modes corresponding to the  {propagative wave vector}
$\alpha$ is denoted by $\cM_\alpha$.
\end{definition}

By Lemma~\ref{lem:equiv-Maxwell}, $\alpha$ is a  {propagative wave vector} if, and only if, the
homogeneous problem $\tilde b_\alpha(\bE,\bpsi)=0$ for all
$\bpsi\in H_{\alpha,0}(\curl,Q_h)$) admits at least one non-trivial solution in
$H_{\alpha,0}(\curl,Q_h)$.

With the representation theorem of Riesz we can write \eqref{qpv} in the form
\begin{equation} \label{eq:Lqpv}
\tilde L_\alpha\bE\ =\ \tilde\bff_\alpha
\end{equation}
where $\tilde L_\alpha:H_{\alpha,0}(\curl,Q_h)\to H_{\alpha,0}(\curl,Q_h)$ and
$\tilde\bff_\alpha\in H_{\alpha,0}(\curl,Q_h)$ are defined as
\begin{eqnarray}
\langle\tilde L_\alpha\bu,\bpsi\rangle_{H(\curl,Q_h)} & = &
\tilde b_\alpha(\bu,\psi) \nonumber \\
& = & \int\limits_{Q_h}\bigl[\frac{\mu_0}{\mu}(\nabla\times\bu)\cdot(\nabla\times\overline{\bpsi})-
k^2\cred{\frac{\epsilon}{\epsilon_0}}\,\bu\cdot\overline{\bpsi}\bigr]dx\\
\nonumber&&-\int\limits_{\Gamma_h}
\widetilde\cT_\alpha(\bnu\times\bu)\cdot(\bnu\times\overline{\bpsi})\,ds\,,
\label{eq:Ldef} \\
\langle\tilde\bff_\alpha,\bpsi\rangle_{H(\curl,Q_h)} & = &
\tilde g_\alpha(\bpsi)\ =\ \int\limits_{\Gamma_h}\bigl[(\nabla\times\bE^{in})_T-
\widetilde\cT_\alpha(\bnu\times\bE^{in})\bigr]\cdot(\bnu\times\overline{\bpsi})\,ds
\nonumber
\end{eqnarray}
where $\langle\cdot,\cdot\rangle_{H(\curl,Q_h)}$ denotes the inner product in
$H(\curl,Q_h)$ (which is the inner product in $H_{\alpha,0}(\curl,Q_h)$).

\begin{lemma} \label{lem:L-Fredholm-Maxwell}
Let $\alpha\in\R^2$ be arbitrary with $|\alpha_n|\not= k$ for all $n\in\Z^2$ where
$\alpha_n=n+\alpha$ for $n\in\Z^2$. Then the operator $\tilde L_\alpha$ is a Fredholm
operator with index zero from $H_{\alpha,0}(\curl,Q_h)$ into itself.
\end{lemma}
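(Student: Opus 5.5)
We use the standard Helmholtz decomposition argument for Maxwell cavity/grating problems: H(curl) does not embed compactly into L^2, so a coercive-plus-compact splitting is obtained only after the gradient part has been separated off. Set $X:=H_{\alpha,0}(\curl,Q_h)$ and
$$ S:=\{\nabla p:\ p\in H^1_{\alpha}(Q_h),\ p=0\text{ on }\Gamma_0\}, $$
which is a closed subspace of $X$ (the gradients with vanishing tangential trace on $\Gamma_0$). Let $Y$ be the complement of $S$ in $X$ defined through the $\eps$-weighted $L^2$ orthogonality
$$ Y:=\Bigl\{\bu\in X:\ \int_{Q_h}\frac{\eps}{\eps_0}\,\bu\cdot\overline{\nabla p}\,dx-\frac{1}{k^2}\bigl\langle\widetilde\cT_\alpha(\bnu\times\bu),\bnu\times\nabla p\bigr\rangle=0\ \text{for all }\nabla p\in S\Bigr\}, $$
that is, $Y$ consists of the functions which are "$b$-orthogonal" to $S$. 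This choice means that $\tilde b_\alpha(\bu,\nabla p)=0$ for $\bu\in Y$, because $\nabla\times\nabla p=0$. One first checks that $X=Y\oplus S$ as a direct sum. For $\bu\in X$ one seeks $p$ solving the scalar variational problem
$$ \int_{Q_h}\frac{\eps}{\eps_0}\nabla p\cdot\overline{\nabla\phi}\,dx-\frac{1}{k^2}\bigl\langle\widetilde\cT_\alpha(\bnu\times\nabla p),\bnu\times\nabla\phi\bigr\rangle=(\text{same form with }\bu) $$
for all admissible $\phi$. The left-hand form is coercive on $\{p\in H^1_\alpha(Q_h): p|_{\Gamma_0}=0\}$, which follows from two observations:
\begin{itemize}
\item its real part is $\int\frac{\eps}{\eps_0}|\nabla p|^2$ minus $k^{-2}\Re\langle\widetilde\cT_\alpha\cdot,\cdot\rangle$;
\item on gradients, Lemma~\ref{Prop-Calderon}(i) gives $\widetilde\cT_\alpha(\bnu\times\nabla p)$ explicitly, since the term $(\hat\balpha_n\cdot\bv_n)\hat\balpha_n$ annihilates the $k^2$ part up to the factor $k^2-|\alpha_n|^2=\beta_n^2$. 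Hence the boundary term equals $-\,i\sum\beta_n|\alpha_n|^2|p_n(h)|^2\cdot 4\pi^2$ up to sign, and its real part has the favorable sign from Lemma~\ref{Prop-Calderon}(iii).
\end{itemize}
If the sign turns out unfavorable, I would instead use the Lax–Milgram/Gårding form: the finitely many propagating modes $|\alpha_n|<k$ contribute only a compact perturbation, and injectivity follows from taking imaginary parts. In either case $p$ exists uniquely and depends continuously on $\bu$, which gives the decomposition.

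Next, on $S$ we have $\tilde b_\alpha(\nabla p,\nabla\phi)=-k^2\bigl[\int\frac{\eps}{\eps_0}\nabla p\cdot\overline{\nabla\phi}-k^{-2}\langle\widetilde\cT_\alpha\,\cdot,\cdot\rangle\bigr]$. This is $-k^2$ times the coercive scalar form above, so $-\tilde b_\alpha$ is coercive on $S$. Moreover $\tilde b_\alpha(\bu,\nabla\phi)=0$ for $\bu\in Y$. The cross term $\tilde b_\alpha(\nabla p,\bv)$ for $\bv\in Y$ is not zero in general, but it is handled by introducing the sign-flip operator $\bu+\nabla p\mapsto\bu-\nabla p$, in the spirit of T-coercivity.

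On $Y$ we split $\tilde b_\alpha=b_1+b_2$ with
$$ b_1(\bu,\bpsi)=\int\frac{\mu_0}{\mu}\nabla\times\bu\cdot\nabla\times\overline{\bpsi}+\int\bu\cdot\overline{\bpsi}-\bigl\langle\widetilde\cT^{\,ev}_\alpha(\bnu\times\bu),\bnu\times\bpsi\bigr\rangle, $$
where $\widetilde\cT^{\,ev}_\alpha$ collects the evanescent modes $|\alpha_n|>k$ and is split further into the $k^2$ part and the $\hat\balpha_n$ part. The remainder $b_2$ contains three pieces: the $L^2$ term $(k^2\eps/\eps_0+1)\,\bu\cdot\overline{\bpsi}$, the finitely many propagating Fourier modes, and the lower-order part of $\widetilde\cT_\alpha$, namely $k^2/\beta_n-|\alpha_n|$-type differences which decay like $|n|^{-1}$ relative to the principal symbol. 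The principal part of the evanescent boundary term is $\sum\frac{1}{|\beta_n|}\bigl[k^2|\bv_n|^2-|\hat\balpha_n\cdot\bv_n|^2\bigr]$. Its $-|\hat\balpha_n\cdot\bv_n|^2/|\beta_n|$ component is a surface divergence term; with the minus sign in $b_1$ it enters with a good sign, while the remaining $k^2|\bv_n|^2/|\beta_n|$ part is of order $-1$ and hence compact on $H^{-1/2}(\Div)$-traces. This yields coercivity of $b_1$ on $X$ (even on all of $X$) modulo compact terms.

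The main obstacle is the compactness of $b_2$ on $Y$, that is, a compact embedding $Y\hookrightarrow L^2(Q_h)^3$ together with compactness of the relevant trace maps. The plan is to show that every $\bu\in Y$ has $\ddiv(\eps\bu)=0$ in $Q_h$ and a boundary condition on $\Gamma_h$ coupling the normal component to the surface divergence of the tangential trace via $\widetilde\cT_\alpha$. With $\bnu\times\bu=0$ on $\Gamma_0$, these are the ingredients of a Weber/Kirsch–Hettlich-type compactness result (the standard route, as in \cite{GP22}). To justify it I would expand $Y$-functions in $\alpha$-quasi-periodic Fourier series in $\tilde x$ and reduce to the known compact embedding of $\{\bu\in H(\curl):\ \ddiv(\eps\bu)\in L^2,\ \bnu\times\bu|_{\Gamma_0}=0,\ \bnu\cdot\eps\bu|_{\Gamma_h}\in H^{-1/2}\}$, using that the Calderon relation provides $\bnu\cdot\bu|_{\Gamma_h}\in H^{-1/2}$. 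Once this is in place, $\tilde L_\alpha$ restricted blockwise equals coercive plus compact on $Y$ and coercive on $S$, with off-diagonal coupling $Y\to S$ zero and $S\to Y$ bounded. This block structure is triangular up to compact perturbations. Therefore $\tilde L_\alpha$ composed with the isomorphism $\bu+\nabla p\mapsto\bu-\nabla p$ is of the form coercive plus compact, and consequently $\tilde L_\alpha$ is Fredholm of index zero.
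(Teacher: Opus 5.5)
Your architecture matches the paper's. You split $H_{\alpha,0}(\curl,Q_h)$ into gradients and the $\tilde b_\alpha$-orthogonal complement $X_{\alpha,0}$ (your $Y$), use $\tilde b_\alpha(\bu,\nabla\phi)=0$ on the complement, get coercivity on gradients from Lemma~\ref{Prop-Calderon}(iii), and use a Weber-type compact embedding of $X_{\alpha,0}$.

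The gap is in the Fredholm property of the complement block. You assert that the part $\cK_1$ of $\widetilde\cT_\alpha$ with multiplier $ik^2/\beta_n$ is ``of order $-1$ and hence compact on $H^{-1/2}(\Div)$-traces''. From this you conclude that $b_1$ is coercive modulo compact on all of $X$. This is false.

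Take $p_j(x)=e^{i\alpha_m\cdot\tilde x}\sinh(|\alpha_m|x_3)$ with $m=m_j\in\Z^2$, $|m_j|\to\infty$, and set $\bu_j=\nabla p_j/\|\nabla p_j\|_{L^2(Q_h)}$. These are curl-free elements of $H_{\alpha,0}(\curl,Q_h)$, orthonormal and hence weakly null. The Fourier coefficient of $\bnu\times\bu_j$ on $\Gamma_h$ is parallel to $\bnu\times\hat\balpha_m$, so the $\hat\balpha_m$-term of $\widetilde\cT_\alpha$ drops out. A direct computation gives
\[
\langle\cK_1(\bnu\times\bu_j),\bnu\times\bu_j\rangle\ =\ k^2\,\frac{|\alpha_m|}{|\beta_m|}\,\tanh(|\alpha_m|h)\ \longrightarrow\ k^2\neq 0,
\]
and also $b_1(\bu_j,\bu_j)\to 1-k^2$, which is negative for $k>1$. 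The reason is that the divergence-free part of $\bnu\times\bu$ (equivalently, the components $\bu_n\cdot\hat\balpha_n$ of $\bu_T$) has only $H^{-1/2}$ regularity. $\cK_1$ maps it into the $\Curl$-part of $H^{-1/2}(\Curl,\Gamma_h)$ with no gain, so counting the order of the symbol proves nothing.

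Compactness of this term holds only on $X_{\alpha,0}$ and must be extracted from its defining relation, which is what the paper does. For $\bu\in X_{\alpha,0}$ one has $\langle\Div[\widetilde\cT_\alpha(\bnu\times\bu)\times\bnu],\phi\rangle=k^2\int_{Q_h}\frac{\eps}{\eps_0}\bu\cdot\nabla\phi\,dx$, hence $\|\Div[\widetilde\cT_\alpha(\bnu\times\bu)\times\bnu]\|_{H^{-1/2}(\Gamma_h)}\le c\|\bu\|_{L^2(Q_h)}$. This norm is exactly the dangerous series $\sum_n|\bu_n\cdot\hat\balpha_n|^2/(|\beta_n|^2(1+|n|^2)^{1/2})$ in $\|\cK_1(\bnu\times\bu)\|_{H^{-1/2}(\Curl)}$. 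The remainder is bounded by $\|\bnu\times\bu\|_{H^{-3/2}(\Gamma_h)}$, which is compact. This estimate is the heart of the proof; your plan names ``compactness of the relevant trace maps'' but neither identifies nor proves it.

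Three smaller points.

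\emph{Sign of $Y$.} The sign in your definition of $Y$ is reversed, since
\[
-k^{-2}\tilde b_\alpha(\bu,\nabla p)=\int_{Q_h}\frac{\eps}{\eps_0}\bu\cdot\overline{\nabla p}\,dx+k^{-2}\langle\widetilde\cT_\alpha(\bnu\times\bu),\bnu\times\nabla p\rangle .
\]
With your minus sign, $Y$ is not $\tilde b_\alpha$-orthogonal to the gradients, and the scalar form loses coercivity through the infinitely many evanescent modes. Evaluated at $p_j$ and divided by $\|\nabla p_j\|^2_{L^2(Q_h)}$, its real part tends to $0$ if $\eps=\eps_0$ near $\Gamma_h$. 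A G{\aa}rding argument that discards only finitely many propagating modes cannot repair this. With the correct sign it is the paper's coercivity argument for $L_{22}$. Also, for $\bv=\bnu\times\nabla p$ the $\hat\balpha_n$-term vanishes identically, so your explicit formula with $\beta_n$ is not the right one.

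\emph{Compact embedding.} Your reduction to the space with $\ddiv(\eps\bu)\in L^2$, $\bnu\times\bu=0$ on $\Gamma_0$ and merely $\bnu\cdot\eps\bu\in H^{-1/2}(\Gamma_h)$ fails. Already for constant $\eps$, the harmonic gradients $\bu_j$ above are bounded in that space and weakly null with $\|\bu_j\|_{L^2}=1$. You need the coupled condition linking $\bu\cdot\bnu$ to $\Div$ of the Calderon image of the tangential trace, as in Lemma~\ref{lem:Hodge}.

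\emph{Final step.} The sign flip $\bu+\nabla p\mapsto\bu-\nabla p$ is unnecessary. With a merely bounded $S\to Y$ block it does not by itself give coercive plus compact, because the cross term $\tilde b_\alpha(\nabla p,\bu)$ survives. That term happens to be finite rank, since the non-Hermitian part of $\tilde b_\alpha$ lives on the propagating modes. The block-triangular structure already gives the claim, as in the paper: $L_{21}=0$, $L_{22}$ invertible, and $L_{11}$ Fredholm of index zero.
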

\begin{proof}
We use the quasi-periodic Hodge decomposition (or Helmholtz decomposition, see e.g.,
\cite{GP22}) in the form
\begin{equation} \label{eq:Hodge}
H_{\alpha,0}(\curl,Q_h)\ =\ \nabla H_{\alpha,0}^1(Q_h)\ \oplus\ X_{\alpha,0}
\end{equation}
where
\begin{eqnarray*}
H_{\alpha,0}^1(Q_h) & = & \left\{p\in H_\alpha^1(Q_h): p=0\mbox{ on }
\Gamma_0\right\}, \\
X_{\alpha,0} & = &\left\{\bu\in H_{\alpha,0}(\curl,Q_h):
\tilde b_\alpha(\bu,\nabla p)=0\mbox{ for all }p\in H_{\alpha,0}^1(Q_h)\right\}.
\end{eqnarray*}
In Lemma~\ref{lem:Hodge} of the appendix we show (again refering to \cite{GP22}) that
$X_{\alpha,0}$ is compactly embedded in $(L^2(Q_h))^3$.
With the Hodge decomposition \eqref{eq:Hodge} of $H_{\alpha,0}(\curl,Q_h)$ we can
represent $\tilde L_\alpha$ in matrix form (supressing the index $\alpha$) as
$$ \left[\begin{matrix} L_{11} & L_{12} \\ L_{21} & L_{22} \end{matrix}\right]:
X_{\alpha,0}\times\nabla H_{\alpha,0}^1(Q_h)\ \longrightarrow\
X_{\alpha,0}\times\nabla H_{\alpha,0}^1(Q_h)\,. $$
First we note that $L_{21}=0$. Indeed, $\langle L_{21}\bv,\nabla\varphi
\rangle_{H(\curl,Q_h)}=\tilde b_\alpha(\bv,\nabla\varphi)=0$ for $\bv\in X_{\alpha,0}$
and $\varphi\in H_{\alpha,0}^1(Q_h)$ by the definition of $X_{\alpha,0}$. We show that
$L_{11}$ is Fredholm and $L_{22}$ is an isomorphism from $\nabla H_{\alpha,0}^1(Q_h)$
onto itself. We begin with $L_{22}$ and note that $L_{22}\nabla p=\nabla\phi$ is
equivalent to
$$ k^2\int\limits_{Q_h}\cred{\frac{\epsilon}{\epsilon_0}}\,\nabla p\cdot\nabla\overline{\varphi}\,dx+
\int\limits_{\Gamma_h}\widetilde\cT_\alpha(\bnu\times\nabla p)\cdot
(\bnu\times\nabla\overline{\varphi})\,ds\ =\ -\langle\nabla\phi,\nabla\varphi
\rangle_{H(\curl,Q_h)} $$
for all $\varphi\in H^1_{\per,0}(Q_h)$. From part (iii) of Lemma~\ref{Prop-Calderon}
we conclude that the left hand side is coerive which proves that $L_{22}$ is an
isomorphism.
\newline
It remains to show that the part $L_{11}:X_{\alpha,0}\to X_{\alpha,0}$ is a Fredholm
operator. We decompose $\widetilde\cT_\alpha$ into $\widetilde\cT_\alpha=\cK_1-
\cK_2-\cC$ where
\begin{eqnarray*}
(\cK_1\bv)(\tilde x) & = & ik^2\sum_{n\in\Z^2}\frac{1}{\beta_n}\,\bv_n\,
e^{i\alpha_n\cdot\tilde x}\,, \\
(\cK_2\bv)(\tilde x) & = & i\sum_{|\alpha_n|<k}\frac{1}{\beta_n}\,
(\hat\balpha_n\cdot\bv_n)\,\hat\balpha_n\,e^{i\alpha_n\cdot\tilde x} \\
(\cC\bv)(\tilde x) & = & \sum_{|\alpha_n|>k}\frac{1}{|\beta_n|}\,
(\hat\balpha_n\cdot\bv_n)\,\hat\balpha_n\,e^{i\alpha_n\cdot\tilde x}\,.
\end{eqnarray*}
Then $\cC$ is obviously non-negative, i.e. $\langle\cC\bv,\bv\rangle\geq 0$ for all
$\bv\in H^{-1/2}_\alpha(\Gamma_h)$. We write $L_{11}$ in the form
\begin{eqnarray*}
\langle L_{11}\bu,\bpsi\rangle_{H(\curl,Q_h)} & = &
\int\limits_{Q_h}\bigl[\frac{\mu_0}{\mu}(\nabla\times\bu)\cdot(\nabla\times\overline{\bpsi})+
\bu\cdot\overline{\bpsi}\bigr]dx+\langle\cC(\bnu\times\bu),\bnu\times\bpsi
\rangle \\
& & -\ \int\limits_{Q_h}(1+k^2\cred{\frac{\epsilon}{\epsilon_0}})\,\bu\cdot\overline{\bpsi}\,dx\ -
\langle\cK_1(\bnu\times\bu),\bnu\times\bpsi\rangle \\
& & +\ \langle\cK_2(\bnu\times\bu),\bnu\times\bpsi\rangle\,.
\end{eqnarray*}
The first line represents a coercive operator from $X_{\alpha,0}$ onto itself,
\tcr{because $\mu(x)$ has a positive lower bound}. The
third line defines a compact operator because $\cK_2$ is finite dimensional. The
second line defines an operator $\cK$ from $X_{\alpha,0}$ into itself. We show that
$\cK$ is compact. We estimate for $\bu\in X_{\alpha,0}$
\begin{eqnarray*}
\bigl|\langle\cK\bu,\bpsi\rangle_{H(\curl,Q_h)}\bigr| & \leq &
c\,\Vert\bu\Vert_{L^2(Q_h)}\Vert\bpsi\Vert_{L^2(Q_h)}\\
& & +\ \Vert\cK_1(\bnu\times\bu)\Vert_{H^{-1/2}(\Curl,\Gamma_h)}
\Vert\bnu\times\bpsi\Vert_{H^{-1/2}(\Div,\Gamma_h)} \\
& \leq & c\,\bigl[\Vert\bu\Vert_{L^2(Q_h)}+
\Vert\cK_1(\bnu\times\bu)\Vert_{H^{-1/2}(\Curl,\Gamma_h)}\bigr]
\Vert\bpsi\Vert_{H(\curl,Q_h)}
\end{eqnarray*}
where we used the trace theorem and thus
\begin{equation} \label{eq:aux2}
\Vert\cK\bu\Vert_{H(\curl,Q_h)}\ \leq\ c\,\bigl[\Vert\bu\Vert_{L^2(Q_h)}+
\Vert\cK_1(\bnu\times\bu)\Vert_{H^{-1/2}(\Curl,\Gamma_h)}\bigr]\,.
\end{equation}
We compute, using $|\beta_n|\geq c\,[1+|n|^2]^{1/2}$,
\begin{equation} \label{eq:aux1}
\begin{split}
\Vert\cK_1(\bnu\times\bu)\Vert_{H^{-1/2}(\Curl,\Gamma_h)}^2\ =\ &
k^44\pi^2\sum_{n\in\Z^2}\frac{|\bnu\times\bu_n|^2+|(\bnu\times\bu_n)\times
\hat\balpha_n|^2}{|\beta_n|^2[1+|n|^2]^{1/2}} \\
=\ & 4\pi^2k^4\sum_{n\in\Z^2}\frac{|\bnu\times\bu_n|^2+|\bu_n\cdot\hat\balpha_n|^2}
{|\beta_n|^2[1+|n|^2]^{1/2}} \\
\leq\  & c\,\Vert\bnu\times\bu\Vert^2_{H^{-3/2}(\Gamma_h)}\ +\
4\pi^2k^4\sum_{n\in\Z^2}\frac{|\bu_n\cdot\hat\balpha_n|^2}{|\beta_n|^2[1+|n|^2]^{1/2}}
\end{split}
\end{equation}
because $\bnu\cdot\hat\balpha_n=0$. In the following we estimate the series on the right
hand side. We observe that $\widetilde\cT_\alpha(\bnu\times\bu)\times\bnu\in
H^{-1/2}(\Div,\Gamma_h)$. From the definition of $\widetilde\cT_\alpha$ we have
$$ \Div\bigl[\widetilde\cT_\alpha(\bnu\times\bu)\times\bnu\bigr]\ =\
-k^2\sum_{n\in\Z^2}\frac{1}{\beta_n}\bigl[(\bnu\times\bu_n)\times\bnu\bigr]\cdot
\hat\balpha_n\,e^{i\alpha_n\cdot\tilde x}\ =\
-k^2\sum_{n\in\Z^2}\frac{1}{\beta_n}\bu_n\cdot\hat\balpha_n\,
e^{i\alpha_n\cdot\tilde x} $$
and thus
$$ \bigl\Vert\Div\bigl[\widetilde\cT_\alpha(\bnu\times\bu)\times\bnu\bigr]
\bigr\Vert^2_{H^{-1/2}(\Gamma_h)}\ =\ 4\pi^2k^4\sum_{n\in\Z^2}
\frac{|\bu_n\cdot\hat\balpha_n|^2}{|\beta_n|^2[1+|n|^2]^{1/2}} $$
which \tcr{coincides} with the series on the right hand side of \eqref{eq:aux1}.
To compute the norm, let $\phi\in H^{1/2}(\Gamma_h)$ and extend $\phi$ to
$\phi\in H^1_{\alpha,0}(Q_h)$ with $\Vert\phi\Vert_{H^1(Q_h)}\leq
c\Vert\phi|{\gamma_h}\Vert_{H^{1/2}(\Gamma_h)}$ for some $c>0$ which is independent
of $\phi$. Then
\begin{eqnarray*}
\bigl\langle\Div\bigl[\widetilde\cT_\alpha(\bnu\times\bu)\times\bnu\bigr],\phi
\bigr\rangle & = &
-\bigl\langle\widetilde\cT_\alpha(\bnu\times\bu)\times\bnu,\Grad\phi\bigr\rangle \\
& = & \tilde b_\alpha(\bu,\nabla\phi)\ +\ k^2\int_{Q_h}\cred{\frac{\epsilon}{\epsilon_0}}\,\bu\cdot\nabla\phi\,dx\ =\
k^2\int_{Q_h}\cred{\frac{\epsilon}{\epsilon_0}}\,\bu\cdot\nabla\phi\,dx
\end{eqnarray*}
because $\bu\in X_{\alpha,0}$. Therefore,
$$ \bigl|\bigl\langle\Div\bigl[\widetilde\cT_\alpha(\bnu\times\bu)\times\bnu\bigr],
\phi\bigr\rangle\bigr|\ \leq\ c_1\Vert\bu\Vert_{L^2(Q_h)}\Vert\phi\Vert_{H^1(Q_h)}\
\leq\ c_2\Vert\bu\Vert_{L^2(Q_h)}\Vert\phi\Vert_{H^{1/2}(\Gamma_h)}\,. $$
Since this holds for all $\phi\in H^{1/2}(\Gamma_h)$ we conclude that
$$ \bigl\Vert\Div\bigl[\widetilde\cT_\alpha(\bnu\times\bu)\times\bnu\bigr]
\bigr\Vert_{H^{-1/2}(\Gamma_h)}\ \leq\ c\,\Vert\bu\Vert_{L^2(Q_h)}\,. $$
Substituting this into \eqref{eq:aux1} we obtain
$$ \Vert\cK_1(\bnu\times\bu)\Vert_{H^{-1/2}(\Curl,\Gamma_h)}^2\ \leq\
c\,\Vert\bnu\times\bu\Vert^2_{H^{-3/2}(\Gamma_h)}\ +\ c\,\Vert\bu\Vert^2_{L^2(Q_h)} $$
and thus by \eqref{eq:aux2}
\begin{equation*}
\Vert\cK\bu\Vert_{H(\curl,Q_h)}\ \leq\
c\,\bigl[\Vert\bnu\times\bu\Vert_{H^{-3/2}(\Gamma_h)}\ +\
\Vert\bu\Vert_{L^2(Q_h)}\bigr]\,.
\end{equation*}
This holds for all $\bu\in X_{\alpha,0}$ and proves compactness of $\cK$ as an operator from $X_{\alpha,0}$ into itself. Indeed, if $\bu^{(j)}\in X_{\alpha,0}$ converges weakly to
zero in the norm of $H_\alpha(\curl,Q_h)$ then $\bu^{(j)}$ converges to zero in the norm
of $L^2(Q_h)$ by the compact imbedding of $X_{\alpha,0}$ in $(L^2(Q_h))^3$.
Furthermore, the trace $\bnu\times\bu^{(j)}$ converges weakly to zero in
$H^{-1/2}(\Div,\Gamma_h)$ by the trace theorem and thus in the norm of
$H^{-3/2}(\Gamma)$ because $H^{-1/2}(\Div,\Gamma_h)$ is compactly embedded in
$H^{-3/2}(\Gamma)$. This shows that $L_{11}$ is a Fredholm operator and ends the proof.
\end{proof}

\begin{lemma}\label{lem:L-prop-Maxwell}
Let again $\alpha\in\R^2$ be arbitrary with $|\alpha_n|\not= k$ for all $n\in\Z^2$.
\begin{itemize}
\item[(i)] The operator $\tilde L_\alpha$ is an isomorphism from
$H_{\alpha,0}(\curl,Q_h)$ onto itself if $\alpha$ is not a  {propagative wave vector}.
If $\alpha$ is a  {propagative wave vector}, then the null space $\cN(\tilde L_\alpha)$
concides with the nullspace $\cN(\tilde L_\alpha^\ast)$ of the adjoint, is finite dimensional,
and consists of surface wave modes only, i.e. the extension $\bE_0$ of
$\bE\in\cN(\tilde L_\alpha)$ to $Q_\infty$, given by \eqref{eq:ext-Maxwell}, takes
the form
\begin{equation}\label{eq12 Maxwell}
\bE_0(x)\ =\ \sum_{ n\in\Z^2:|n+\alpha|>k}\bE_n\,e^{i\alpha_n\cdot\tilde{x}-
|\beta_n|(x_3-h)}\,,\quad  x_3>h\,,
\end{equation}
where $\bE_n\in \C^3$ satisfies $\hat{\balpha}_n\cdot\bE_n=0$ for all $n\in \Z^2$.
\item[(ii)] The Riesz number of $\tilde L_\alpha$ is one, that is,
$\cN(\tilde L_\alpha)=\cN(\tilde L_\alpha^2)$. Moreover,  {the decomposition
$H_{\alpha,0}(\curl,Q_h)=\cN(\tilde L_\alpha)\oplus\cR(\tilde L_\alpha)$ is orthogonal
with respect to $\langle\cdot,\cdot\rangle_{H(\curl,Q_h)}$}.
Here $\cR(\tilde L_\alpha)$ denotes the range of the operator $\tilde L_\alpha$.
\item[(iii)] If $\bE^{in}(x)=\bp\,e^{ik\hat\btheta\cdot x}$ with $\bp\cdot\hat\btheta=0$
and $\alpha$ is of the particular form $\alpha=k\tilde\theta=
k\sin\theta_1\binom{\cos\theta_2}{\sin\theta_2}$, then the equation \eqref{eq:Lqpv}
admits at least one solution $\bE\in H_{\alpha,0}(\curl,Q_h)$.
\end{itemize}
\end{lemma}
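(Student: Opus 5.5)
The plan is to mirror the proof of Lemma~\ref{lem:L-prop}, using the Fredholm property from Lemma~\ref{lem:L-Fredholm-Maxwell} and the explicit Calderon map of Lemma~\ref{Prop-Calderon}.

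\emph{Step 1: part (i).} For $\bE\in\cN(\tilde L_\alpha)$ we have $\tilde b_\alpha(\bE,\bE)=0$. The volume integral is real, because $\mu,\eps$ are real and positive, so taking imaginary parts gives $\Im\langle\widetilde\cT_\alpha(\bnu\times\bE),\bnu\times\bE\rangle=0$. By Lemma~\ref{Prop-Calderon}(ii),
$$k^2|\bv_n|^2-|\hat\balpha_n\cdot\bv_n|^2=0 \quad\text{for all } |\alpha_n|<k, \qquad \bv=\bnu\times\bE|_{\Gamma_h}.$$
Since $|\hat\balpha_n\cdot\bv_n|\le|\alpha_n||\bv_n|<k|\bv_n|$, this forces $\bv_n=0$ for these $n$. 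Hence the tangential coefficients $\tilde\bE_n$ vanish, and by \eqref{eq:ext-Maxwell} the full coefficients vanish too. The extension therefore contains only the evanescent terms in \eqref{eq12 Maxwell}. The divergence relation $\hat\balpha_n\cdot\bE_n+\beta_nE^{(3)}_n=0$ gives the stated constraint, after reading $\hat\balpha_n$ with $\beta_n$ in its third slot.

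If $\alpha$ is not propagative, then $\tilde L_\alpha$ is injective by Lemma~\ref{lem:equiv-Maxwell}, and the Fredholm index zero property makes it an isomorphism. Finite dimensionality of the null space also follows from the Fredholm property.

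For $\cN(\tilde L_\alpha)=\cN(\tilde L_\alpha^\ast)$, note that $\langle\tilde L_\alpha^\ast\bv,\bpsi\rangle=\overline{\tilde b_\alpha(\bpsi,\bv)}$. It then suffices to show $\tilde b_\alpha(\bv,\bpsi)=\overline{\tilde b_\alpha(\bpsi,\bv)}$ for $\bv\in\cN$ and all $\bpsi$. The volume part is Hermitian. On the boundary part the symbol of $\widetilde\cT_\alpha$ is the matrix $\frac{i}{\beta_n}(k^2I-\hat\balpha_n\hat\balpha_n^\top)$, which is real symmetric. For $|\alpha_n|>k$ its coefficient $i/\beta_n=1/|\beta_n|$ is real, so it is Hermitian. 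Only modes with $|\alpha_n|<k$ break symmetry, and these have $\bv_n=0$. Thus $\tilde b_\alpha(\bv,\bpsi)-\overline{\tilde b_\alpha(\bpsi,\bv)}$ involves only the vanishing $\bv_n$ terms. This gives $\cN\subset\cN(\tilde L_\alpha^*)$, and equality follows from equal (finite) dimensions under index zero.

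\emph{Step 2: part (ii).} This is verbatim Lemma~\ref{lem:L-prop}(ii). If $\tilde L_\alpha^2\bw=0$, set $\bv=\tilde L_\alpha\bw\in\cN=\cN(\tilde L_\alpha^\ast)$. Then $\|\bv\|^2=\langle\bv,\tilde L_\alpha\bw\rangle=\langle\tilde L_\alpha^\ast\bv,\bw\rangle=0$. Orthogonality follows from $\cR(\tilde L_\alpha)=\cN(\tilde L_\alpha^\ast)^\perp=\cN^\perp$, since Fredholm operators have closed range.

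\emph{Step 3: part (iii).} By the Fredholm alternative it suffices to show $\tilde g_\alpha(\bpsi)=0$ for all $\bpsi\in\cN(\tilde L_\alpha^\ast)=\cN$. For the plane wave, $\bnu\times\bE^{in}$ and $(\nabla\times\bE^{in})_T$ on $\Gamma_h$ are multiples of $e^{i\alpha\cdot\tilde x}$, i.e.\ they contain only the $n=0$ Fourier mode. Hence $\widetilde\cT_\alpha(\bnu\times\bE^{in})$ is also a pure $n=0$ mode. The dual pairing with $\bnu\times\bpsi$ therefore picks out only $(\bnu\times\bpsi)_0$. Since $|\alpha_0|=|\alpha|=k|\sin\theta_1|<k$, part (i) gives $(\bnu\times\bpsi)_0=0$, so $\tilde g_\alpha(\bpsi)=0$.

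The main obstacle is Step 1's symmetry argument for $\cN=\cN(\tilde L_\alpha^\ast)$. One has to check carefully that the pairing with a general $\bpsi$ only involves $\bv_n$ coefficients, so that the non-Hermitian propagating part drops out.
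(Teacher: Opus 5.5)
Your proposal is correct and follows the paper's proof essentially step for step. It kills the propagating coefficients via the imaginary part and Lemma~\ref{Prop-Calderon}(ii), uses the symmetry $\tilde b_\alpha(\bu,\bv)=\overline{\tilde b_\alpha(\bv,\bu)}$ for $\bu\in\cN(\tilde L_\alpha)$, repeats the Riesz-number argument from the scalar case, and uses the vanishing $n=0$ coefficient for (iii). The only deviations are minor: you obtain $\cN(\tilde L_\alpha^\ast)\subset\cN(\tilde L_\alpha)$ from the index-zero dimension count instead of ``the same way'', and you sensibly read the constraint as $(\alpha_n,\beta_n)\cdot\bE_n=0$ rather than the literal $\hat\balpha_n\cdot\bE_n=0$.
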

\begin{proof}
(i) The first assertion is obvious because $\tilde L_\alpha$ is Fredholm. Let
$\bE\in\cN(\tilde L_\alpha)$. Then $\tilde b_\alpha(\bE,\bpsi)$ for all $\bpsi$ and
thus, taking $\bpsi=\bE$ and the imaginary part we obtain
$\langle\widetilde\cT_\alpha(\bnu\times\bE),\bnu\times\bE\rangle=0$. By part (ii) of
Lemma~\ref{Prop-Calderon} we obtain
$$ \sum_{|\alpha_n|<k}\frac{1}{\beta_n}\bigl[k^{2}|\bE_n|^2-
|\hat{\balpha}_n\cdot\bE_n|^2]\ =\ 0 $$ and thus
$$ 0\ =\ \sum_{|\alpha_n|<k}\frac{1}{\beta_{n}}[k^{2}|\bE_{n}|^{2}-
|\hat\balpha_{n}\cdot\bE_{n}|^2]\ \geq\ \sum_{|\alpha_n|<k}\frac{1}{\beta_{n}}
(k^{2}-|\alpha_{n}|^2)|\bE_{n}|^{2} $$
from which $\bE_n=0$ follows for all $n\in\Z^2$ with $|\alpha_n|<k$. This proves
\eqref{eq12 Maxwell}.
\smalf
Using the above property it is easy to verify that
$\tilde b_\alpha(\bu,\bv)=\overline{\tilde b_\alpha(\bv,\bu)}$ for all
$\bu\in\cN(\tilde L_\alpha)$ and $\bv\in H_{\alpha,0}(\curl,Q_h)$.
Then we have for $\bu\in\cN(\tilde L_\alpha)$ and arbitrary
$\bv\in  H_{\alpha,0}(\curl,Q_h)$ that
$$ 0\ =\ \langle\tilde L_\alpha\bu,\bv\rangle_{H(\curl,Q_h)}\ =\
\overline{\langle \tilde L_\alpha\bv,\bu\rangle_{H(\curl,Q_h)}}\ =\
\langle\tilde L_\alpha^\ast\bu,\bv\rangle_{H(\curl,Q_h)}\,, $$
i.e. $\tilde L_\alpha^\ast\bu=0$ and thus $\cN(\tilde L_\alpha)\subset
\cN(\tilde L_\alpha^\ast)$. The reverse inclusion is proven in the same way.
\smalf
(ii) It sufficies to prove  $\cN(\tilde L_\alpha^2) \subset\cN(\tilde L_\alpha)$.
Given $\bu\in H_{\alpha,0}(\curl,Q_h)$ with $\tilde L_\alpha^2\bu=0$, we set
$\bv=\tilde L_\alpha\bu\in\cR(\tilde L_\alpha).$ Since $\bv\in\cN(\tilde L_\alpha)=
\cN(\tilde L_\alpha^\ast)$, we obtain
$$ \|\bv\|_{H(\curl,Q_h)}^2\ =\ \langle\bv,\bv\rangle_{H(\curl,Q_h)}\ =\
\langle\bv,\tilde L_\alpha\bu\rangle_{H(\curl,Q_h)}\ =\
\langle\tilde L_\alpha^\ast\bv,\bu\rangle_{H(\curl,Q_h)}\ =\ 0\,, $$
which proves $\bv=\tilde L_\alpha\bu=0$, i.e.
$\cN(\tilde L_\alpha^2)\subset\cN(\tilde L_\alpha)$.
\newline
Therefore, the Riesz number is one which implies the decomposition
$H_{\alpha,0}(\curl,Q_h)=\cN(\tilde L_\alpha)\oplus\cR(\tilde L_\alpha)$.
The orthogonality follows from the relation
$\cN(\tilde L_\alpha)=\cN(\tilde L_\alpha^\ast)$.
\smalf
(iii) By the Fredholm alternative existence of a solution
$\bE\in H_{\alpha,0}(\curl,Q_h)$ of \eqref{eq:Lqpv} is assured if the right hand side
$\tilde\bff_\alpha$, defined in \eqref{eq:Ldef}, is orthogonal to the nullspace of
$\tilde L_\alpha^\ast$ which coincides with the nullspace of $\tilde L_\alpha$, i.e.
it remains to show that $\langle\tilde\bff_\alpha,\bpsi\rangle_{H(\curl,Q_h)}=0$ for
all $\bpsi\in\cN(\tilde L_\alpha)$. From part (i) we conclude that the coefficient
$\bpsi_n$ for $n=(0,0)$ vanishes, i.e. $\int_{\Gamma_h}\bpsi(\tilde x,h)\,
e^{-ik\tilde\theta\cdot\tilde x}d\tilde x=0$. From the definition of $\bE^{in}$ we
have the representation
$$ \bigl[(\nabla\times\bE^{in})_T-\widetilde\cT_\alpha(\bnu\times\bE^{in})
\bigr](\tilde x)\ =\ \bq(k)\,e^{ik\tilde\theta\cdot\tilde x} $$
with
\begin{equation} \label{eq:q_k}
\bq(k)\ =\ \biggl[ik\,\bnu\times(\hat\theta\times\bp)\times\bnu-
\frac{ik}{\cos\theta_1}(\bnu\times\bp)+\frac{ik}{\cos\theta_1}
(\check\btheta\cdot(\bnu\times\bp))\check\btheta\biggr]\,e^{-ikh\cos\theta_1}
\end{equation}
where we have set $\check\btheta=(\tilde\theta,0)^\top\in\R^3$ and thus
$$ \langle\tilde\bff_\alpha,\bpsi\rangle_{H(\curl,Q_h)} = \int_{\Gamma_h}
\bq(k)\cdot(\bnu\times\overline{\bpsi(\tilde x,h)})\,e^{ik\tilde\theta\cdot\tilde x}
d\tilde x = (\bq(k)\times\bnu)\cdot\int_{\Gamma_h}\overline{\bpsi(\tilde x,h)\,
e^{-ik\tilde\theta\cdot\tilde x}}d\tilde x = 0\,. $$
\end{proof}

% {The following blue paragraph has to be modified and placed to where?}

%\cblue{
%Using the Hodge decomposition, one can show that for given incident direction
%$\hat\theta$ the $\alpha-$quasi-periodic boundary value problem
%(\ref{eq:dp-qp1})--(\ref{eq:dp-qp3}) for $\alpha=k\tilde\theta$ is uniquely
%solvable in $H_{\alpha,0}(\curl,Q_h)$ if $k\in\R_+$ does not belong to a discrete
%set $\mathcal{D}\subset \R_+$ with the only accumulating point at infinity. The
%exceptional set $\mathcal{D}$ consists of two parts: $\{k\in \R_+: |n+\alpha|=k
%\mbox{ for some }n\in \Z^2\}$ (that is, $\alpha=k\tilde\theta$ is  a cut-off
%vector)  and when $\alpha=k\tilde\theta$ is a  {propagative wave vector}. The first part is
%caused by Rayleigh frequencies (so that the Calderon map $\widetilde\cT_\alpha$ is
%not well-defined), while the second part does not ensure uniqueness\footnote{The
%first part is not essential. In fact, well-posedness of the diffraction problem
%(\ref{eq:odp-2})--(\ref{eq:Rayleigh-Maxwell}) can be justified for non-critical
%vectors $\alpha$ even if $|n+\alpha|=k$ for some $n\in\Z^2$. We refer to \cite{HR14,
%HR15} for the variational approach coupled with a truncating technique based on the
%Fourier mode expansion instead of the Calderon map.}.
%Moreover, uniqueness and existence hold true for all $k>0$ if the material in
%$Q_{h_0}$ is absorbing, that is, $\Im q\geq q_0$ on $Q_{h_0}$ for some $q_0>0$.
%\footnote{This can be relaxed if $q$ is sufficiently smooth such that the unique
%continuation principle holds.}
%}

By Lemma \ref{lem:L-prop-Maxwell} (i), one can show that for given incident direction
$\hat\theta$ the $\alpha$-quasi-periodic boundary value problem
(\ref{eq:dp-qp1})--(\ref{eq:dp-qp3}) with $\alpha=k\tilde\theta$ is uniquely
solvable in $H_{\alpha,0}(\curl,Q_h)$, provided $\alpha$ is neither a  {propagative wave
vector} nor a cut-off vector. Moreover, uniqueness and existence hold true for all $k>0$
and $\alpha\in \R^2$ if the material in $Q_{h}$ is absorbing, that is, $\Im q\geq q_0$ on
$Q_{h}$ for some $q_0>0$. \footnote{This can be relaxed if  {$\mu$ and $\eps$ are}
sufficiently smooth such
that the unique continuation principle holds.}  If $q$ is real-valued (non-absorbing), there
is, in general, no uniqueness to our scattering problem due to the possible existence of
guided waves. The aim of this section is to justify the uniqueness and existence of
quasi-periodic solutions when
$\alpha=k\tilde\theta=k\sin\theta_1(\cos\theta_2,\sin\theta_2)^\top$ is a
 {propagative wave vector} vector but still not a cut-off vector. As in the Helmholtz
case, we shall carry over the limiting absorbing arguments to the Maxwell equations by
investigating the convergence of the solutions when the imaginary part of
$k\in \C_+$ tends to zero.

\subsection{Transform to space of periodic functions}

In this section we keep $\tilde\theta$ and $k>0$ fixed and set $\alpha=k\tilde\theta$.
As in the scalar case we transform the \tcr{$\alpha$-dependent} setting to the periodic
setting by replacing $\bE(x)$ by $\bE(x)=e^{ik\tilde\theta\cdot\tilde x}\bF(x)=
e^{ik\check\btheta\cdot x}\bF(x)$ for a field $\bF$ which is periodic with respect
to $\tilde x$. Here, we recall $\check\btheta=(\tilde\theta,0)\in\R^3$. Using
$\nabla\times[e^{ik\,\check\btheta\cdot x}\bF(x)]=[ik\check\btheta\times\bF(x)+
\nabla\times\bF(x)]e^{ik\check\btheta\cdot x}$, we note that the scattering problem
\eqref{eq:odp-2}--\eqref{eq:Rayleigh-Maxwell} turns into
 {\begin{equation} \label{eq:Maxwell-per}
\begin{split}
& \nabla\times\bigl[\frac{\mu_0}{\mu}(\nabla\times\bF)\bigr]
+ik\,\bigl\{\check\btheta\times\bigl[\frac{\mu_0}{\mu}(\nabla\times\bF)\bigr]+
\nabla\times\bigl[\frac{\mu_0}{\mu}(\check\btheta\times\bF)\bigr]\bigr\} \\
& -k^2\bigl\{\frac{\eps}{\eps_0}\bF-\check\btheta\times\bigl[\frac{\mu_0}{\mu}
(\bF\times\check\btheta)\bigr]\bigr\}\ =\ 0 \text{ in }\R_+^3,
\end{split}
\end{equation}}
\begin{equation} \label{eq:bc-per}
\bnu\times\bF=0\quad\text{for}\,\;\cred{x_3=0}\,,\qquad
\bF=\bF^{in}+\bF^{sc}\quad\text{in }\R_+^3,
\end{equation}
where $\bF^{in}(x)=\bp\,e^{-ik\cos\theta_1\,x_3}$, and $\bF^{sc}$ satisfies the
Rayleigh expansion \eqref{eq:Rayleigh-Maxwell} with $\alpha_n$ replaced by $n$, i.e.
\begin{equation} \label{eq:Rayleigh-per-Maxwell}
\bF^{sc}(x)\ =\ \sum_{n\in\Z^2}\bF_n
e^{i(n\cdot\tilde{x}+\beta_nx_3)},\quad x_3>h\,.
\end{equation}Note that $\ddiv\bigl[e^{ik\tilde\theta\cdot\tilde x}\bF\bigr]=0$
is equivalent to
\begin{equation} \label{eq:divF}
\beta_nF_n^{(3)}\ +\ (k\tilde\theta+n)\cdot\cred{\tilde F_n}\ =\ 0\quad\text{for all }
n\in\Z^2
\end{equation}
where $\cred{\tilde F_n}=(F_n^{(1)},F_n^{(2)})^\top$ which expresses $F_n^{(3)}$
by $F_n^{(1)}$ and $F_n^{(2)}$.
%\cblue{Question concerning notation: We denoted vectors in $\R^3$ by bold letters in
%contrast to vectors in $\R^2$ (eg. $\check\btheta\in\R^3$ and
%$\tilde\theta\in\R^2$). Should we be consisting with this? Then we have to replace
%$\tilde\bF_n$ by $\tilde F_n$ and so on.}

To reduce the corresponding variational form to the bounded cell
$Q_h=(0,2\pi)^2\times(0,h)$ we define the periodic Calderon map
$\cT_k:H_\per^{-1/2}(\Div,\Gamma_h)\to H_\per^{-1/2}(\Curl,\Gamma_h)$ by
\begin{equation}
(\cT_k\bv)(\tilde x)\ :=\ e^{-i k\tilde\theta\cdot\tilde x}
(\widetilde\cT_{k\tilde\theta}(\bv\,e^{ik\tilde\theta\cdot\tilde x}))(\tilde x)\quad
\text{for }\bv\in H_\per^{-1/2}(\Div,\Gamma_h)\,.
\end{equation}
Setting $\bE=e^{ik\tilde\theta\cdot\tilde{x}}\bF$ and replacing $\bpsi$ by
$e^{ik\tilde\theta\cdot\tilde{x}}\bpsi$ in \eqref{qpv} for $\bF,\bpsi\in
H_{\per,0}(\curl,Q_h)$ we obtain the periodic variational form of
\eqref{qpv} as
\begin{equation} \label{eq:var-per}
b_k(\bF,\bpsi)\ =\ g_k(\bpsi)\quad\mbox{for all }\bpsi\in H_{\per,0}(\curl,Q_h)\,,
\end{equation}
where
\begin{align}
b_k(\bv,\bpsi)\quad = &\quad \tilde b_{k\tilde\theta}(e^{ik\tilde\theta\cdot\tilde{x}}\bv,
e^{ik\tilde\theta\cdot\tilde{x}}\bpsi) \nonumber \\
= &\quad \int\limits_{Q_h}\bigl[\frac{\mu_0}{\mu}\nabla\times
\bigl(e^{ik\tilde\theta\cdot\tilde{x}}\bv\bigr)\bigr]
\cdot\bigl[\nabla\times\bigl(\overline{e^{ik\tilde\theta\cdot\tilde{x}}\bpsi}\bigr)
\bigr]-k^2\cred{\frac{\epsilon}{\epsilon_0}}\,\bv\cdot\overline\bpsi\,dx \nonumber \\
& -\int\limits_{\Gamma_h}\cT_k(\bnu\times\bv)\cdot(\bnu\times\overline\bpsi)\,ds \nonumber \\
= &\quad \int\limits_{Q_h}\frac{\mu_0}{\mu}\bigl[(\nabla\times\bv)\cdot
(\nabla\times\overline\bpsi)
+ik\,(\nabla\times\overline\bpsi)\cdot(\check\btheta\times\bv)-
ik\,(\nabla\times\bv)\cdot(\check\btheta\times\overline\bpsi)\bigr] \label{eq:b_alpha} \\
& -\ k^2\bigl\{ {\frac{\eps}{\eps_0}}\,\bv-\bigl[\frac{\mu_0}{\mu}\sin^2\theta_1\bv+
(\check\btheta\cdot\bv)\check\btheta\bigr]\bigr\}\cdot
\overline\bpsi\,dx\ -\int\limits_{\Gamma_h}
\cT_k(\bnu\times\bv)\cdot(\bnu\times\overline\bpsi)\,ds\,, \nonumber
\end{align}
\begin{equation} \label{eq:g_alpha}
\begin{split}
g_k(\bpsi) = &\ \tilde g_{k\tilde\theta}(e^{ik\tilde\theta\cdot\tilde x}\bpsi)\ =\
\int\limits_{\Gamma_h}\bigl[(\nabla\times\bE^{in})_T-
 {\cT_k(\bnu\times\bE^{in})}\bigr]\cdot(\bnu\times\overline\bpsi)\,
e^{-ik\tilde\theta\cdot\tilde x}\,ds \\
= &\ \bq(k)\cdot\int\limits_{\Gamma_h}\bnu\times\overline\bpsi\,ds
\end{split}
\end{equation}
with $\bq(k)$ from \eqref{eq:q_k}. By the representation theorem of Riesz the equation
\eqref{eq:var-per} is equivalent to
\begin{equation} \label{eq:Lv-per}
L_k\bF\ =\ \bff_k
\end{equation}
for $\bF\in H_{\per,0}(\curl,Q_h)$, where
$\langle L_k\bv,\bpsi\rangle_{H(\curl,Q_h)}=b_k(\bv,\bpsi)$ and
$\langle\bff_k,\bpsi\rangle_{H(\curl,Q_h)}=g_k(\bpsi)$ for
$\bv,\bpsi\in H_{\per,0}(\curl,Q_h)$.
We note that $L_k:H_{\per,0}(\curl,Q_h)\to H_{\per,0}(\curl,Q_h)$ satisfies all of
properties of $\tilde L_\alpha$ stated in Lemmas~\ref{lem:L-prop-Maxwell}. Indeed, if
we define the isomorphism $J_\alpha:H_{\per,0}(\curl,Q_h)\to H_{\alpha,0}(\curl,Q_h)$
by $J_\alpha\bv(x)=e^{i\alpha\cdot\tilde x}\bv(x)$ then
\begin{equation*}
\langle L_k\bv,\bpsi\rangle_{H(\curl,Q_h)}\ =\
\tilde b_\alpha(J_\alpha\bv,J_\alpha\bpsi)\ =\
\langle\tilde L_\alpha J_\alpha\bv,J_\alpha\bpsi\rangle_{H(\curl,Q_h)}\ =\
\langle J_\alpha^\ast\tilde L_\alpha J_\alpha\bv,\bpsi\rangle_{H(\curl,Q_h)}
\end{equation*}
for $\alpha=k\tilde\theta$, i.e. $L_k=J_\alpha^\ast\tilde L_\alpha J_\alpha$.
In particular, $L_k$ is a Fredholm operator with index zero and Riesz number one.

\subsection{The limiting absorption argument}

We consider again the case that $\alpha=k\tilde\theta$, i.e. $\hat\balpha=k\check\btheta$
where $\check\btheta:=(\tilde\theta,0)^\top=
\sin\theta_1(\cos\theta_2,\sin\theta_2,0)^\top\in\R^3$. By the definition of $b_k$ we
note that we can extend the definition of $L_k$ to the case of complex $k$ with
$\Im k>0$ (since $\beta_n=\beta_n(k)$ is defined in \eqref{eqn:betaN}, see
Lemma~\ref{lem:beta}). Then the following analog to part (iii) of Lemma~\ref{lem:L-prop}
holds.
\begin{lemma} \label{lem:L-isomorphism}
Suppose that  {$\eps(x)\mu(x)\geq\eps_0\mu_0\sin^2\theta_1$ in $Q_\infty$.} For every
$k_0>0$ there exists $ {\kappa_0}>0$ such that $L_k$ is an isomorphism from
$H_{\per,0}(\curl,Q_h)$ onto itself for all $k\in\C$ with $|k-k_0|< {\kappa_0}$ and
$\Im k>0$. Furthermore, there is a unique solution $\bF\in H_{\per,loc}(\curl,\R^3)$ to
\eqref{eq:Maxwell-per}.
\end{lemma}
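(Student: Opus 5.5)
The plan follows the scalar argument of Lemma~\ref{lem:L-prop}~(iii). There are two parts: first Fredholmness of $L_k$ for complex $k$ near $k_0$, then injectivity via a quadratic-pencil argument on the whole strip $Q_\infty$.

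\textbf{Fredholmness.} I would first check that $L_k$ is Fredholm of index zero for $|k-k_0|<\kappa_0$, $\Im k>0$.

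Choose $\kappa_0$ so small that no $\beta_n(k)$ vanishes in the disc, i.e. $|n+k\tilde\theta|^2\neq k^2$ for all $n$. This is possible because only finitely many $n$ have $\beta_n(k_0)$ close to zero. If $k_0$ itself is a cut-off value, one uses instead that $\Im k>0$ together with Lemma~\ref{lem:beta}, and shrinks the disc so that the relevant finitely many $\beta_n$ stay away from zero away from $k_0$.

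The proof of Lemma~\ref{lem:L-Fredholm-Maxwell} transfers almost verbatim to the periodic setting with complex $k$. One uses the periodic Hodge decomposition $\nabla H^1_{\per,0}\oplus X_{k,0}$, where $X_{k,0}$ is the $b_k$-orthogonal complement of gradients. One splits $\cT_k$ into $\cK_1-\cK_2-\cC$, and $|\beta_n(k)|\ge c(1+|n|^2)^{1/2}$ holds uniformly for $k$ in the disc.

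The only new point is the gradient block $L_{22}$. Coercivity there used $\Re\langle\widetilde\cT\cdot,\cdot\rangle\ge0$. For complex $k$ I would instead get coercivity up to a compact perturbation: the high-frequency part of $\cT_k$ on gradients has the right sign, and the finitely many low modes give a finite-rank operator. Hence $L_{22}$ is Fredholm of index zero, and with $L_{21}=0$ (or a compact $L_{21}$, after a suitable choice of complement) $L_k$ is Fredholm of index zero. So it suffices to prove injectivity.

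\textbf{Injectivity.} Let $L_k\bF=0$ with $\Im k>0$. Extend $\bF$ to $x_3>h$ by the Rayleigh series \eqref{eq:Rayleigh-per-Maxwell}. By Lemma~\ref{lem:beta}, $\Im\beta_n\ge\delta>0$, so the extension decays exponentially and lies in $H_{\per,0}(\curl,Q_\infty)$. It solves \eqref{eq:Maxwell-per} in $Q_\infty$, and the boundary term on $\Gamma_h$ cancels against the exterior integrals.

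Testing with arbitrary $\bpsi\in H_{\per,0}(\curl,Q_\infty)$ gives $A\bF-kB\bF-k^2C\bF=0$ with $k$-independent bounded operators:
\begin{itemize}
\item $\langle A\bF,\bpsi\rangle=\int\frac{\mu_0}{\mu}(\nabla\times\bF)\cdot\overline{\nabla\times\bpsi}$;
\item $B$ collects the two mixed terms $\pm i\frac{\mu_0}{\mu}(\nabla\times\cdot)\cdot(\check\btheta\times\cdot)$, and is self-adjoint because the two terms are conjugate-symmetric;
\item $\langle C\bF,\bpsi\rangle=\int\bigl[\frac{\eps}{\eps_0}\bF-\frac{\mu_0}{\mu}(\check\btheta\times\bF)\times\check\btheta\bigr]\cdot\overline\bpsi$.
\end{itemize}
The hypothesis $\eps\mu\ge\eps_0\mu_0\sin^2\theta_1$ gives $\frac{\eps}{\eps_0}|\bF|^2\ge\frac{\mu_0}{\mu}\sin^2\theta_1|\bF|^2\ge\frac{\mu_0}{\mu}|\check\btheta\times\bF|^2$, so $C\ge0$. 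Also $A\ge0$.

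I would then avoid the square root of $C$ and use the elimination argument directly. Put $a=\langle A\bF,\bF\rangle$, $b=\langle B\bF,\bF\rangle$, $c=\langle C\bF,\bF\rangle$, all real. Taking the imaginary part of $a-kb-k^2c=0$ and dividing by $\Im k$ gives $b=-2c\Re k$. The real part then yields $a+c|k|^2=0$, so $a=c=0$.

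Since $C$ need not be strictly positive, $c=0$ alone does not force $\bF=0$. Instead I would use the equivalent pencil form $\tilde b(\bE,\bE)=0$ for $\bE=e^{ik\check\btheta\cdot x}\bF$, i.e. $\int\frac{\mu_0}{\mu}|\nabla\times\bE|^2=k^2\int\frac{\eps}{\eps_0}|\bE|^2$ in $Q_\infty$, where $\bE$ decays in $x_3$. The ratio on the left is real and nonnegative, while $k^2\notin[0,\infty)$ for $\Im k>0$ and $k$ near $k_0>0$. Since $\bE$ is integrable over the strip, this forces $\int\eps|\bE|^2=0$, hence $\bE=0$ and $\bF=0$.

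The vertical boundary terms do not cancel for complex $k$ (Remark~\ref{rem:cmplx}). I would therefore carry out this computation in the periodic form using $a,b,c$, and deduce $\bF=0$ from $c=0$ combined with $\eps\ge\delta$ together with the pointwise bound. If that bound is not strict, I would add $a=0$ and the curl equation.

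\textbf{Main obstacle.} The hardest step is the Fredholm property for complex $k$, namely controlling the gradient block and the splitting of $\cT_k$ when $\beta_n(k)$ is non-real. Once invertibility of $L_k$ is established, existence and uniqueness of $\bF$ in $H_{\per,loc}$ follow from the equivalence with the Rayleigh-expanded problem, as in Lemma~\ref{lem:equiv-Maxwell}.
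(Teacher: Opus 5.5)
Your injectivity argument follows the paper: Rayleigh extension of $\bF$, exponential decay via Lemma~\ref{lem:beta}, Green's theorem over $Q_\infty$, and the self-adjoint pencil $A\bF-kB\bF-k^2C\bF=0$. Eliminating $b$ to get $a+|k|^2c=0$ is an equivalent substitute for the square-root/$2\times 2$-system trick the paper imports from the scalar case.

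The Fredholm part is not an obstacle in the paper. $L_{k_0}=J_\alpha^\ast\tilde L_\alpha J_\alpha$ is Fredholm of index zero by Lemma~\ref{lem:L-Fredholm-Maxwell}, $k\mapsto L_k$ is norm-continuous near a non-cut-off $k_0$, and Fredholm operators of index zero form an open set. Redoing the Hodge decomposition directly is unnecessary. As you sketched it, it is also inaccurate: the gradient spaces that annihilate the curl part of $b_k$ are the twisted ones, $\nabla p+ik\check\btheta\,p$ for trial functions and $\nabla\varphi+i\bar k\check\btheta\,\varphi$ for test functions. They are not $\nabla H^1_{\per,0}(Q_h)$.

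The genuine gap is the last step. From $a=c=0$ you only get $\nabla\times\bF=0$ and $\langle C\bF,\bF\rangle=0$. The paper finishes here by invoking positivity of $C$, which holds if $\eps\mu>\eps_0\mu_0\sin^2\theta_1$ a.e. You correctly note that under the non-strict hypothesis $C$ is merely nonnegative, but your repair fails:
\begin{itemize}
\item For $\Im k>0$ the field $\bE=e^{ik\check\btheta\cdot x}\bF$ is not quasi-periodic with a real parameter. The lateral boundary terms do not cancel, and $|\bE|^2$ is not periodic, so the identity $\int\frac{\mu_0}{\mu}|\nabla\times\bE|^2\,dx=k^2\int\frac{\eps}{\eps_0}|\bE|^2\,dx$ cannot be derived.
\item The claim that $c=0$, $\eps\ge\delta$ and the pointwise bound give $\bF=0$ says nothing on the set where equality holds.
\end{itemize}
Your fallback (``add $a=0$ and the curl equation'') points the right way but omits the key ingredient, which is a divergence constraint.

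Here is how to close it.
\begin{itemize}
\item From $c=0$: $\bF=0$ for $x_3>h$, since there the integrand of $c$ is at least $\cos^2\theta_1|\bF|^2$. Also $\check\btheta\cdot\bF=0$ a.e.
\item Test the pencil equation with $\bpsi=\nabla\varphi+i\bar k\check\btheta\,\varphi$, where $\varphi$ is periodic, vanishes on $\Gamma_0$, and has compact support in $x_3$. All curl terms drop out, leaving $\int_{Q_\infty}\eps\,\bF\cdot(\nabla\overline\varphi-ik\check\btheta\,\overline\varphi)\,dx=0$. Using $\check\btheta\cdot\bF=0$, this gives $\int_{Q_\infty}\eps\,\bF\cdot\nabla\overline\varphi\,dx=0$.
\item Since $\bF$ is curl-free, vanishes for $x_3>h$, and satisfies $\bnu\times\bF=0$ on $\Gamma_0$, it equals $\nabla p$ with $p$ periodic and constant both on $\Gamma_0$ and for $x_3>h$.
\item Take $\varphi=(p-p|_{\Gamma_0})\chi(x_3)$ with a cutoff $\chi$ equal to $1$ on $[0,h+1]$. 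This gives $\int_{Q_\infty}\eps|\bF|^2\,dx=0$, hence $\bF=0$ because $\eps\ge\delta>0$.
\end{itemize}
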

\begin{proof}
First we note that the operator $L_k$ is Fredholm for real values of $k$. Since the
set of Fredholm operators is open in the space of bounded operaturs we conclude that
also $L_k$ is Fredholm for sufficiently small $\Im k>0$. Therefore, it suffices to show
injectivity. Let $\bv\in H_{\per,0}(\curl,Q_h)$ with $L_k\bv=0$, i.e.
$b_k(\bv,\bpsi)=0$ for all $\bpsi\in H_{\per,0}(\curl,Q_h)$. For $\alpha=k\tilde\theta$,
i.e. $\hat\balpha=k\check\btheta$, the above form of $b_k$ takes the form
\begin{align*}
b_k(\bv,\bpsi)\ & =\ \int\limits_{Q_h}\frac{\mu_0}{\mu}[(\nabla\times\bv)\cdot(\nabla\times\overline\bpsi)
+ik\,(\nabla\times\overline\bpsi)\cdot(\check\btheta\times\bv)-
ik\,(\nabla\times\bv)\cdot(\check\btheta\times\overline\bpsi)] \\
& \quad -k^2\bigl( {\frac{\eps}{\eps_0}}-\frac{\mu_0}{\mu}\sin^2\theta_1\bigr)\,\bv\cdot\overline\bpsi-k^2\frac{\mu_0}{\mu}(\check\btheta\cdot\bv)
(\check\btheta\cdot\overline\bpsi)\,dx
-\int\limits_{\Gamma_h}\cT_\alpha(\bnu\times\bv)\cdot(\bnu\times\overline\bpsi)\,ds\,.
\end{align*}
We extend $\bv$ by the Rayleigh expansion to $x_3>h$, i.e. we set \cred{$\bv=(\tilde{v}, v^{(3)})$ with}
\begin{eqnarray*}
\cred{\tilde v}(x) & = & (v^{(1)},v^{(2)})^\top\ =\ \sum_{n\in\Z^2}\tilde v_n
e^{i(n\cdot\tilde x+\beta_n(x_3-h))},\quad x_3>h\,, \\
\cred{v^{(3)}}(x) & = & -\sum_{n\in\Z^2}\frac{1}{\beta_n}\,[n+k\tilde\theta]
\cdot\tilde v_n\,e^{i(n\cdot\tilde x+\beta_n(x_3-h))},\quad x_3>h\,,
\end{eqnarray*}
where $\tilde v_n=\frac{1}{4\pi^2}\int_{\Gamma_{h}}\tilde v\,e^{-in\cdot\tilde x}
ds$ are the Fourier coefficients of the tangential components of $\bv|_{\Gamma_{h}}$.
Then $\Delta(e^{ik\tilde\theta\cdot\tilde x}\bv)+
k^2(e^{ik\tilde\theta\cdot\tilde x}\bv)=0$ and
$\ddiv(e^{ik\tilde\theta\cdot\tilde x}\bv)=0$ for $x_3>h$, i.e.
$\nabla\times\nabla\times(e^{ik\tilde\theta\cdot\tilde x}\bv)-
k^2(e^{ik\tilde\theta\cdot\tilde x}\bv)=0$, and $\bv$ satisfies the differential
equation of \eqref{eq:Maxwell-per}. Since $\Im\beta_n>0$ uniformly with respect
to $n$ (Lemma~\ref{lem:beta}) we conclude that $\bv$ decays exponentially as
$x_3\to\infty$. Application of Green's theorem (note that the
components on the vertical parts of the boundary cancel because of the periodicity of
$\bv$) yields
\begin{eqnarray*}
& & \int\limits_{Q_\infty}\frac{\mu_0}{\mu}\bigl[(\nabla\times\bv)\cdot(\nabla\times\overline\bpsi)
+ik\,(\nabla\times\overline\bpsi)\cdot(\check\btheta\times\bv)-
ik\,(\nabla\times\bv)\cdot(\check\btheta\times\overline\bpsi)\bigr] \\
& & -k^2\bigl( {\frac{\eps}{\eps_0}}-\frac{\mu_0}{\mu}\sin^2\theta_1\bigr)\,
\bv\cdot\overline\bpsi-k^2\frac{\mu_0}{\mu}(\check\btheta\cdot\bv)
(\check\btheta\cdot\overline\bpsi)\,dx\ =\ 0
\end{eqnarray*}
for all $\bpsi\in H_{\per,0}(\curl,Q_\infty)$. As in the scalar case this can be
written as
$$ A\bv\ -\ kB\bv\ -\ k^2C\bv\ =\ 0\,, $$
where $A$, $B$, and $C$ are self-adjoint bounded operators from
$H_{\per,0}(\curl,Q_\infty)$ into itself defined by
\begin{eqnarray}
\langle A\bv,\bpsi\rangle_{H(\curl,Q_\infty)} & = &
\int_{Q_{\infty}}\frac{\mu_0}{\mu}(\nabla\times\bv)\cdot(\nabla\times\overline{\bpsi})\,dx,
\label{eq:Op-A} \\
\langle B\bv,\bpsi\rangle_{H(\curl,Q_\infty)} & = & i\int_{Q_{\infty}}\frac{\mu_0}{\mu}
\bigl[(\nabla\times\bv)\cdot(\check\btheta\times\overline\bpsi)-
(\nabla\times\overline\bpsi)\cdot(\check\btheta\times\bv)\bigr]\,dx, \label{eq:Op-B} \\
\langle C\bv,\bpsi\rangle_{H(\curl,Q_\infty)} & = & \int_{Q_{\infty}}\bigl[
 {\frac{\eps}{\eps_0}}\,\bv\cdot\overline\bpsi-\frac{\mu_0}{\mu}
[(\check\btheta\times\bv)\times\check\btheta]\cdot
\overline\bpsi\bigr]\,dx \nonumber \\
& = & \int_{Q_{\infty}}\bigl[\bigl( {\frac{\eps}{\eps_0}}-\frac{\mu_0}{\mu}
\sin^2\theta_1\bigr)\,\bv\cdot\overline\bpsi+
\frac{\mu_0}{\mu}(\check\btheta\cdot\bv)(\check\btheta\cdot\overline\bpsi)\bigr]\,dx\,. \label{eq:Op-C}
\end{eqnarray}
Moreover, the operator $C$ is a positive operator if
 {$\eps(x)\mu(x)\geq\eps_0\mu_0\sin^2\theta_1$} in $Q_h$, and $A, B, C$ are all
self-adjoint operators. By arguing analogously to the Helmholtz case, one can show that
$\bv=0 $ if $\Im k>0$.
\end{proof}

Now we fix a real valued $k>0$ and consider the perturbation $k+i\epsilon$ with (small)
$\epsilon>0$. We write $\bff(\epsilon)=\bff_{(k+i\epsilon)}\in H_{\per,0}(\curl,Q_h)$ and
$L(\epsilon)=L_{(k+i\epsilon)}$ for $\epsilon>0$. Again, denote by
$P:H_{\per,0}(\curl,Q_h)\rightarrow\cN(L(0))$ the projection operator with respect to
the decomposition $H(\curl,Q_h)=\cN(L(0))\oplus\cR(L(0))$. From
Lemma~\ref{lem:L-prop-Maxwell} and $L(0)=J_\alpha^\ast\tilde L(0) J_\alpha$ for
$\alpha=k\tilde\theta$ it is easily seen that this decomposition is orthogonal.

\begin{lemma}\label{lem:L-deriv}
\begin{itemize}
\item[(i)] $\bff(\epsilon)\in\mathcal{R}(L(\epsilon))$ for all sufficiently small
$\epsilon>0$ and $\bff(0),\bff^\prime(0)\in\cR(L(0))$.
\item[(ii)] $PL^\prime(0)$ is one-to-one on $\cN=\cN(L(0))$ if
 {$\mu(x)\eps(x)\geq\eps_0\mu_0\sin^2\theta_1$} in $\R^3_+$.
\end{itemize}
\end{lemma}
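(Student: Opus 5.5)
Part (i) mirrors Lemma~\ref{lem:DL-prop}~(i). For small $\epsilon>0$, Lemma~\ref{lem:L-isomorphism} (applied with $k_0=k$) shows that $L(\epsilon)$ is an isomorphism, so $\bff(\epsilon)\in\cR(L(\epsilon))$. For $\bff(0)$ and $\bff'(0)$ we use that $H_{\per,0}(\curl,Q_h)=\cN\oplus\cR(L(0))$ is orthogonal. It therefore suffices to show $\langle\bff(0),\bpsi\rangle=\langle\bff'(0),\bpsi\rangle=0$ for all $\bpsi\in\cN$. By \eqref{eq:g_alpha}, $\langle\bff(\epsilon),\bpsi\rangle=g_{k+i\epsilon}(\bpsi)=\bq(k+i\epsilon)\cdot\int_{\Gamma_h}\bnu\times\overline{\bpsi}\,ds$. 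The dependence on $\epsilon$ sits entirely in the vector $\bq(k+i\epsilon)$, which is differentiable near $\epsilon=0$ because $\cos\theta_1\neq 0$. Hence both the value and the derivative at $0$ are multiples of $\int_{\Gamma_h}\bnu\times\overline{\bpsi}\,ds$. By Lemma~\ref{lem:L-prop-Maxwell}~(i), transported through $J_\alpha$, every $\bpsi\in\cN$ has vanishing zeroth Fourier coefficient on $\Gamma_h$ (only modes with $|n+\alpha|>k$ occur, and $|\alpha|<k$). So this integral vanishes, which gives $P\bff(0)=P\bff'(0)=0$.

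For (ii) I would follow the scalar proof. First I compute $\langle L'(0)\bv,\bpsi\rangle$ for $\bv,\bpsi\in\cN$ by differentiating $b_{k+i\epsilon}$ in $\epsilon$. The volume part gives $i$ times the $k$-derivative of the integrand. The boundary part gives $-i\int_{\Gamma_h}\partial_k\cT_k(\bnu\times\bv)\cdot(\bnu\times\overline\bpsi)\,ds$, evaluated using the explicit form in Lemma~\ref{Prop-Calderon}~(i) with $\alpha_n=n+k\tilde\theta$ and $\beta_n(k)$. Only indices with $|n+\alpha|>k$ contribute, since the coefficients of $\bv,\bpsi$ vanish otherwise. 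The key claim, the analogue of \eqref{PK}, is that this boundary term equals the $k$-derivative of the volume integrand integrated over $Q_\infty\setminus Q_h$, where $\bv,\bpsi$ are extended by their evanescent Rayleigh expansions \eqref{eq12 Maxwell}. This yields
\[
\langle PL'(0)\bv,\bpsi\rangle=i\bigl[-\langle B\bv,\bpsi\rangle-2k\langle C\bv,\bpsi\rangle\bigr],
\]
with $A,B,C$ from \eqref{eq:Op-A}--\eqref{eq:Op-C} on $Q_\infty$; the overall sign is immaterial. The cleanest route is to observe that for evanescent fields above $h$ the form $b_k$ over $Q_\infty\setminus Q_h$ reproduces exactly the Calderon term (Green's formula, exponential decay). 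Differentiating that identity in $k$ then produces the boundary derivative term. Here I must be careful that the extension itself depends on $k$ through $\beta_n$, and handle this by differentiating only the operator, with fixed traces $\bnu\times\bv$.

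Now suppose $PL'(0)\bv=0$ for some $\bv\in\cN$. Testing with $\bpsi=\bv$ gives $\langle B\bv,\bv\rangle+2k\langle C\bv,\bv\rangle=0$. Since $\bv$ is evanescent and solves \eqref{eq:Maxwell-per} with $b_k(\bv,\cdot)=0$, Green's theorem on $Q_\infty$ as in Lemma~\ref{lem:L-isomorphism} gives $\langle A\bv,\bv\rangle-k\langle B\bv,\bv\rangle-k^2\langle C\bv,\bv\rangle=0$. Eliminating $B$ leads to $\langle A\bv,\bv\rangle+k^2\langle C\bv,\bv\rangle=0$. By the hypothesis $\mu\eps\geq\mu_0\eps_0\sin^2\theta_1$, $C$ is nonnegative, and in fact positive definite in $L^2$ since $\eps/\eps_0-\mu_0\sin^2\theta_1/\mu\geq0$ and the added $(\check\btheta\cdot\bv)^2$ term is nonnegative. $A$ is nonnegative as well. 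Hence both vanish. A bare positivity of $C$ only gives $\nabla\times\bv=0$ when $C$ could degenerate, so I would rather argue as follows. The vanishing of $\langle A\bv,\bv\rangle$ gives $\nabla\times\bv=0$ in $Q_\infty$. Then the $\epsilon,\mu$-equation for $e^{i\alpha\cdot\tilde x}\bv$ reduces to $k^2\eps\bE=0$, forcing $\bv=0$.

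The main obstacle is the identification of the boundary derivative term with the exterior volume integral, which requires the computation above to be carried out carefully.
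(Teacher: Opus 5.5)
Your part (i) is the paper's argument. In part (ii), the target formula $\langle PL'(0)\bv,\bpsi\rangle=-i\langle B\bv_0,\bpsi_0\rangle-2ik\langle C\bv_0,\bpsi_0\rangle$ and the elimination of $B$ via $\langle A\bv_0,\bv_0\rangle-k\langle B\bv_0,\bv_0\rangle-k^2\langle C\bv_0,\bv_0\rangle=0$ are also exactly what the paper does. However, the step you defer as the ``main obstacle'' is the actual content of the proof, and your hint does not contain the idea that resolves it. Keeping the traces $\bnu\times\bv$ fixed handles the boundary side. On the volume side, though, the Green identity in $Q_\infty\setminus Q_h$ reproduces the Calderon term $\cT_{k_\epsilon}(\bnu\times\bv)$ only if the extension of $\bv$ solves the exterior problem at $k_\epsilon=k+i\epsilon$. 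So the extension must depend on $\epsilon$, and differentiating the identity produces a term involving $\partial_\epsilon$ of the extension, which you never show to vanish. The paper's device is an asymmetric extension. $\bv$ is extended by the Rayleigh expansion at $k_\epsilon$ to $\bv_\epsilon$, while $\bpsi\in\cN$ is extended once and for all, at $\epsilon=0$, to the evanescent field $\bpsi_0$. Then $b_{k_\epsilon}(\bv,\bpsi)=\langle A\bv_\epsilon,\bpsi_0\rangle-k_\epsilon\langle B\bv_\epsilon,\bpsi_0\rangle-k_\epsilon^2\langle C\bv_\epsilon,\bpsi_0\rangle$. Differentiating at $\epsilon=0$ leaves only the extra term $\langle\bv_0',A\bpsi_0-kB\bpsi_0-k^2C\bpsi_0\rangle$. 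This vanishes by self-adjointness: $\bpsi_0$ solves the homogeneous problem on $Q_\infty$, and $\bv_0'$ vanishes in $Q_h$ with zero tangential trace on $\Gamma_h$, since $\bnu\times\bv_\epsilon=\bnu\times\bv$ for all $\epsilon$. The exponential decay of $\bpsi_0$ justifies the differentiation and the integrations by parts. Without this cancellation, or an explicit coefficient-wise computation in the spirit of \eqref{PK0}--\eqref{PK} (which you do not carry out), the formula for $PL'(0)$ on $\cN$ remains unproven.

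Your closing step is also not correct as written. Saying that $\eps/\eps_0-\mu_0\sin^2\theta_1/\mu\geq 0$ plus a non-negative term makes $C$ ``positive definite'' only gives non-negativity. Your replacement argument fails as well. $\langle A\bv_0,\bv_0\rangle=0$ gives $\nabla\times\bv_0=0$ for the \emph{periodic} field, but $\nabla\times(e^{i\alpha\cdot\tilde x}\bv_0)=e^{i\alpha\cdot\tilde x}(\nabla\times\bv_0+ik\,\check\btheta\times\bv_0)$. Hence $\nabla\times\bE=ik\,\check\btheta\times\bE$, which is nonzero in general for $\theta_1\neq 0$, and Maxwell's equation does not reduce to $k^2\eps\bE=0$. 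The paper concludes directly from $\langle A\bv_0,\bv_0\rangle+k^2\langle C\bv_0,\bv_0\rangle=0$, $A\geq 0$, and the positivity of $C$, so that $\langle C\bv_0,\bv_0\rangle=0$ forces $\bv_0=0$. This is where the hypothesis on $\eps\mu$ enters, and you should invoke it the same way rather than through the curl. Strictly speaking, this needs the weight $\eps/\eps_0-\mu_0\sin^2\theta_1/\mu$ to be positive a.e.\ in $Q_h$; the paper simply asserts positivity of $C$ under the stated hypothesis.
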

\begin{proof}
(i) Lemma~\ref{lem:L-isomorphism} implies directly that $\bff(\epsilon)\in
\cR(L(\epsilon))$ for all sufficiently small $\epsilon>0$. We have shown
$\tilde\bff_k\in\cR(\tilde L(0))$ in part (iii) of Lemma~\ref{lem:L-prop-Maxwell} which
implies $\bff(0)\in\cR(L(0))$. In the same way we obtain from \eqref{eq:g_alpha}
that $\langle\bff^\prime(0),\bpsi\rangle_{H(\curl,Q_h)}=\bq^\prime(k)\cdot
\int_{\Gamma_h}\bnu\times\overline\bpsi\,ds=0$ and thus $\bff^\prime(0)\in\cR(L(0))$.
\smalf
(ii) The proof of this part is a bit more complicated than in the scalar case. Let
$\bv\in H_{\per,0}(\curl,Q_h)$ and $\bpsi\in\cN(L(0))$ be fixed (below we take also
$\bv\in\cN(L(0))$). First, we want to express $b_{k+i\epsilon}(\bv,\bpsi)$ by an
integral over $Q_\infty$. We set
$$ k_\epsilon:=k+i\epsilon\,,\quad \alpha_{n,\epsilon}\ :=\ k_\epsilon\tilde\theta+n
\quad\text{and}\quad\beta_{n,\epsilon}\ :=\ \sqrt{k_\epsilon^2-
\alpha_{n,\epsilon}\cdot\alpha_{n,\epsilon}} $$
for abbreviation. Let $\tilde {\cred{v}}_n\in\C^2$ be the Fourier coefficients of
$\tilde {\cred{v}}=\cred{(v^{(1)},v^{(2)})^\top}$ on $\Gamma_h$, i.e.
$\tilde {\cred{v}}(\tilde x,h) = \sum_{n\in\Z^2}\tilde v_n\,e^{in\cdot\tilde x}$. We extend
$\bv$ to $Q_\infty\setminus Q_h$ by setting $\bv_\epsilon=\bv$ in $Q_h$ and
\begin{equation}
\bv_\epsilon(x)\ =\ \sum_{n\in\Z^2}\binom{\cred{\tilde v_n}}{\cred{v}_{n,\epsilon}^{(3)}}\,
e^{in\cdot\tilde x+i\beta_{n,\epsilon}(x_3-h)}\,,\ x_3>h\,,\text{ with }
\cred{v}_{n,\epsilon}^{(3)}=-\frac{1}{\beta_{n,\epsilon}}\,
\cred{\tilde v}_n\cdot\alpha_{n,\epsilon}\,.
\end{equation}
Then it is easily seen that $\ddiv(\bv_\epsilon e^{ik_\epsilon\tilde\theta
\cdot\tilde x})=0$ and $(\Delta+k^2)(\bv_\epsilon e^{ik_\epsilon\tilde\theta
\cdot\tilde x})=0$, thus $(\nabla\times\nabla-k^2)(\bv_\epsilon
e^{ik_\epsilon\tilde\theta\cdot\tilde x})=0$, i.e. $\bv_\epsilon$ satisfies
\eqref{eq:Maxwell-per} for $k$ replaced by $k_\epsilon=k+i\epsilon$.

In the same way, but for $\epsilon=0$, we extend $\bpsi$ as a function $\bpsi_0$ to
$Q_\infty \setminus Q_h$, i.e. we set $\bpsi_0=\bpsi$ in $Q_h$ and
$$ \bpsi_0(x)\ =\ \sum_{|\alpha_n|>k}\cred{\binom{\tilde\psi_n}{\psi_n^{(3)}}}\,
e^{in\cdot\tilde x+i\beta_{n,0}(x_3-h)}\,,\ x_3>h\,,\text{ with }
\cred{\psi_n^{(3)}}=-\frac{1}{\beta_{n,0}}\,\cred{\tilde\psi_n}\cdot\alpha_{n,0}\,. $$
We note that $\bv_\epsilon$ and $\bpsi_0$ decay exponentially as $x_3\to\infty$, the
latter because $\bpsi\in\cN(L(0))$. We apply Green's theorem in
$Q_\infty\setminus Q_h$ and use the periodicity of $\bv_\epsilon$ and $\bpsi_0$
with respect to $\tilde x$. This yields
\begin{eqnarray*}
0 & = & \int\limits_{Q_\infty\setminus Q_h}
(\nabla\times\bv_\epsilon)\cdot(\nabla\times\overline{\bpsi_0})
+ik_\epsilon\,(\nabla\times\overline{\bpsi_0})\cdot(\check\btheta\times\bv_\epsilon)-
ik_\epsilon\,(\nabla\times\bv_\epsilon)\cdot(\check\btheta\times\overline{\bpsi_0}) \\
& & -k_\epsilon^2\cos^2\theta_1\,\bv_\epsilon\cdot\overline{\bpsi_0}-
k_\epsilon^2(\check\btheta\cdot\bv_\epsilon)(\check\btheta\cdot\overline{\bpsi_0})\,dx
\ - \int\limits_{\Gamma_h}\bigl[\nabla\times\bigl(\bv_\epsilon
e^{ik_\epsilon\tilde\theta\cdot\tilde x}\bigr)\bigr]\cdot(\overline{\bpsi_0}\times\bnu)\,
e^{-ik_\epsilon\tilde\theta\cdot\tilde x}ds\,.
\end{eqnarray*}
A direct computation yields
$$ e^{-ik_\epsilon\tilde\theta\cdot\tilde x}\bigl[\nabla\times
\bigl(\bv_\epsilon e^{ik_\epsilon\tilde\theta\cdot\tilde x}\bigr)\bigr]_T\ =\
\cT_{k_\epsilon}(\bnu\times\bv_\epsilon)\ =\ \cT_{k_\epsilon}(\bnu\times\bv) $$
because $\bnu\times\bv_\epsilon=\bnu\times\bv$ on $\Gamma_h$. Combining this with
the definition of $b_{k_\epsilon}$ above we have
\begin{eqnarray*}
b_{k_\epsilon}(\bv,\bpsi) & = & \int\limits_{Q_\infty}
\frac{\mu_0}{\mu}[(\nabla\times\bv_\epsilon)\cdot(\nabla\times\overline{\bpsi_0})
+ik_\epsilon\,(\nabla\times\overline{\bpsi_0})\cdot(\check\btheta\times\bv_\epsilon)-
ik_\epsilon\,(\nabla\times\bv_\epsilon)\cdot(\check\btheta\times\overline{\bpsi_0})] \\
& & \quad -\ k_\epsilon^2( {\frac{\eps}{\eps_0}}-\frac{\mu_0}{\mu}\sin^2\theta_1)\,
\bv_\epsilon\cdot\overline{\bpsi_0}-
k_\epsilon^2\frac{\mu_0}{\mu}(\check\btheta\cdot\bv_\epsilon)
(\check\btheta\cdot\overline{\bpsi_0})\,dx \\[4mm]
& = & \langle A\bv_\epsilon,\bpsi_0\rangle_{H(\curl,Q_\infty)}-
k_\epsilon\langle B\bv_\epsilon,\bpsi_0\rangle_{H(\curl,Q_\infty)}-
k_\epsilon^2\langle C\bv_\epsilon,\bpsi_0\rangle_{H(\curl,Q_\infty)}
\end{eqnarray*}
with the operators $A$, $B$ and $C$ from  \eqref{eq:Op-A},  \eqref{eq:Op-B}, and
\eqref{eq:Op-C}, respectively.
Note that $\bv_\eps\equiv\bv$ on $Q_h$. Now we differentiate this with respect to
$\epsilon$ at $\epsilon=0$. This is possible because $\bpsi_0$ decays exponentially and
$\bv_\epsilon$ is uniformly bounded as $x_3\to\infty$. Therefore, we obtain with
the derivative $\bv_0^\prime$ of $\bv_\epsilon$ at $\epsilon=0$
\begin{eqnarray*}
\frac{d}{d\epsilon}b_{k_\epsilon}(\bv,\bpsi) & = &
-i\langle B\bv_0,\bpsi_0\rangle_{H(\curl,Q_\infty)}-
2ki\langle C\bv_0,\bpsi_0\rangle_{H(\curl,Q_\infty)} \\
& & +\ \langle A\bv^\prime_0,\bpsi_0\rangle_{H(\curl,Q_\infty)}-
k\langle B\bv^\prime_0,\bpsi_0\rangle_{H(\curl,Q_\infty)}-
k^2\langle C\bv^\prime_0,\bpsi_0\rangle_{H(\curl,Q_\infty)} \\
& = & -i\langle B\bv_0,\bpsi_0\rangle_{H(\curl,Q_\infty)}-
2ki\langle C\bv_0,\bpsi_0\rangle_{H(\curl,Q_\infty)} \\
& & +\ \bigl\langle\bv^\prime_0,A\bpsi_0-kB\bpsi_0-k^2C\bpsi_0\bigr\rangle_{H(\curl,Q_\infty)}
\end{eqnarray*}
by the self-adjointness of the operators $A$, $B$, and $C$. By arguing
exactly as the proof of Lemma~\ref{lem:L-isomorphism}, we have
\begin{equation*}
A\bpsi_0 -kB\bpsi_0-k^2C\bpsi_0=0
\end{equation*}
since $\bpsi\in\cN(L(0))$, and thus
\begin{equation} \label{eq:L-deriv}
\frac{d}{d\epsilon}b_{k_\epsilon}(\bv,\bpsi)\bigr|_{\epsilon=0}\ =\
-i\,\langle B\bv_0,\bpsi_0\rangle_{H(\curl,Q_\infty)}-
2ik\,\langle C\bv_0,\bpsi_0\rangle_{H(\curl,Q_\infty)}\,.
\end{equation}
This holds for all $\bv\in H_{\per,0}(\curl,Q_h)$ and $\bpsi\in\cN(L(0))$.
Now we start with the proof of injectivity. Let $\bv\in\cN(L(0))$ with
$PL^\prime(0)\bv=0$, i.e. $\frac{d}{d\epsilon}b_k(\bv,\bpsi)=0$ for all
$\bpsi\in\cN(L(0))$, i.e. $\langle B\bv_0+2kC\bv_0,\bpsi_0\rangle_{H(\curl,Q_\infty)}=
0$ for all $\bpsi\in\cN(L(0))$ where again, $\bv_0$ and $\bpsi_0$ are the extensions of
$\bv$ and $\bpsi$, respectively. Using $A\bv_0 -kB\bv_0-k^2C\bv_0=0$ (as in the proof
of Lemma~\ref{lem:L-isomorphism}) gives
$$ \langle A\bv_0+k^2 C\bv_0,\bpsi_0\rangle_{H(\curl,Q_\infty)}\ =\ 0\quad
\text{for all }\bpsi\in\cN(L(0))\,. $$
We take $\bpsi=\bv$ and note that $A$ and $C$ are non-negative, $C$ even positive.
This yields $\bv_0=0$ and ends the proof.
\end{proof}	
Let $\bF(\epsilon)\in H_{\per,0}(\curl,Q_h)$ be the unique solution of
$$ L(\epsilon)\;\bF(\epsilon)\ =\ \bff(\epsilon)\,,\quad\text{for }\epsilon>0. $$
Lemmas \ref{lem:L-isomorphism}-\ref{lem:L-deriv} shows that the singular perturbation
arguments of Lemma \ref{lem:sing-pert} is applicable to the operator $L(\epsilon)$.
Consequently, $\bF(\epsilon)$ converges to $\bF:=\bF(0)$ in $H_{\per,0}(\curl,Q_h)$
and the limiting function $\bF\in H_{\per,0}(\curl,Q_h)$ satisfies the equations
\begin{equation*}
L_k\bF\ =\ \bff_k\quad\mbox{and}\quad PL^\prime(0)\bF\ =\ 0\,.
\end{equation*}
As in the Helmholtz case, the first equation asserts that the extension of the function
$\bE(x)=\bF(x)^{ik\tilde\theta\cdot\tilde x}$ solves the diffraction problem
\eqref{eq:odp-2}-\eqref{eq:Rayleigh-Maxwell} with the real-valued wavenumber $k>0$, while
the second equation provides an additional constraint on $\bE$ to ensure uniqueness when
$\alpha=k\tilde\theta$ is a  {propagative wave vector}.
The equation $PL^\prime(0)\bF=0$ is equivalent to
$\frac{d}{d\epsilon}b_{k_\epsilon}(\bF,\bpsi)\bigr|_{\epsilon=0}=0$ for all
$\bpsi\in\cN(L(0))$, i.e. by \eqref{eq:L-deriv},
$$ \langle B\bF_0,\bpsi_0\rangle_{H(\curl,Q_\infty)}+
2k\,\langle C\bF_0,\bpsi_0\rangle_{H(\curl,Q_\infty)}\ =\ 0\quad\text{for all }
\bpsi\in\cN(L(0))\,. $$
Note that $\bF_0$ and $\bpsi_0$ are the extensions of $\bF$ and $\bpsi$, respectively,
into $Q_\infty$. Using the definitions of $B$ and $C$ from \eqref{eq:Op-B},
\eqref{eq:Op-C} we obtain
\begin{eqnarray*}
& & i\int_{Q_{\infty}}\frac{\mu_0}{\mu}\bigl[(\nabla\times\bF_0)\cdot
(\check\btheta\times\overline\bpsi_0)-
(\nabla\times\overline\bpsi_0)\cdot(\check\btheta\times\bF_0)\bigr]\,dx \\
& & +\ 2k \int_{Q_{\infty}}\bigl[ {\frac{\eps}{\eps_0}}\,\bF_0\cdot\overline\bpsi_0-
\frac{\mu_0}{\mu}[(\check\btheta\times\bF_0)\times\check\btheta]\cdot\overline\bpsi_0\bigr]
\ =\ 0
\end{eqnarray*}
In the quasi-periodic setting, i.e. for $\bE=e^{ik\tilde\theta\cdot\tilde x}\bF_0$, this
equation is equivalent to
\begin{eqnarray}\label{orMax}
2k\int_{Q_\infty} {\frac{\eps}{\eps_0}}\,\bE\cdot\overline\bpsi\,dx & = &
i\int_{Q_{\infty}}\frac{\mu_0}{\mu}\bigl[(\nabla\times\overline\bpsi)\cdot
(\check\btheta\times\bE)
-(\nabla\times\bE)\cdot(\check\btheta\times\overline\bpsi)\bigr]\,dx \label{con-max} \\
&=&-i\,\check{\btheta}\cdot \int_{Q_{\infty}}\frac{\mu_0}{\mu}\left[
\overline\bpsi\times\left(\nabla\times\bE\right)-\bE\times
 \left(\nabla\times\overline{\boldsymbol{\psi}}\right)\right]dx \nonumber
\end{eqnarray}
for all modes $\bpsi\in\cM_k$. Multiplying $k$ to both sides of \eqref{orMax} and using $\hat{\balpha}=k\check{\btheta}=(\alpha,0)$, we can rewrite the previous identity as
\ben
2k^2\int_{Q_\infty} {\frac{\eps}{\eps_0}}\,\bE\cdot\overline\bpsi\,dx & = &
i\int_{Q_{\infty}}\frac{\mu_0}{\mu}\bigl[(\nabla\times\overline\bpsi)\cdot
(\hat\balpha\times\bE)
-(\nabla\times\bE)\cdot(\hat\balpha\times\overline\bpsi)\bigr]\,dx \label{con-max} \\
&=&-i\,\hat{\balpha}\cdot \int_{Q_{\infty}}\frac{\mu_0}{\mu}\left[
\overline\bpsi\times\left(\nabla\times\bE\right)-\bE\times
 \left(\nabla\times\overline{\boldsymbol{\psi}}\right)\right]dx. 
\enn

\smalf

We summarise uniqueness and existence of the electromagnetic diffraction problem below.
\begin{theorem}\label{TH-max} Let $\alpha=k\tilde\theta=
k\sin\theta_1(\cos\theta_2,\sin\theta_2)^\top\in \R^2$.
\begin{itemize}
\item[(i)] If $\alpha$ is not a  {propagative wave vector}, then
there exists a unique solution $\bE\in H_{\alpha,\loc,0}(\curl,Q_\infty)$ such that
$\bE^{sc}:=\bE-\bE^{in}$ satisfies the $\alpha$-quasiperiodic Rayleigh expansion
\eqref{eq:Rayleigh-Maxwell}.
\item[(ii)] Suppose that $\alpha$ is a  {propagative wave vector,
$\eps(x)\mu(x)\geq\eps_0\mu_0\sin^2\theta_1$} in $\R^3_+$ and that $|\alpha+n|\neq k$
for all $n\in \Z^2$. Then the diffraction problem still admits a unique solution
$\bE\in H_{\alpha,\loc,0}(\curl,Q_\infty)$ provided $\bE$ additionally satisfies
the constraint condition \eqref{con-max}.
\end{itemize}
\end{theorem}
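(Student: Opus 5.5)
The proof mirrors that of Theorem~\ref{TH-LAP}, working throughout in the periodic setting $\bF=e^{-ik\tilde\theta\cdot\tilde x}\bE$. Lemma~\ref{lem:equiv-Maxwell} shows that the scattering problem \eqref{eq:odp-2}--\eqref{eq:Rayleigh-Maxwell} is equivalent to \eqref{eq:Lqpv}. Since $L_k=J_\alpha^\ast\tilde L_\alpha J_\alpha$ with $J_\alpha$ an isomorphism, it is also equivalent to $L_k\bF=\bff_k$ in $H_{\per,0}(\curl,Q_h)$, with $\bF$ extended to $x_3>h$ by the Rayleigh expansion \eqref{eq:ext-Maxwell}.

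\emph{Part (i).} If $\alpha$ is not a propagative wave vector, the homogeneous problem has only the trivial solution. By Lemma~\ref{lem:L-prop-Maxwell}~(i) and the Fredholm property from Lemma~\ref{lem:L-Fredholm-Maxwell}, $\tilde L_\alpha$ is then an isomorphism. Hence \eqref{eq:Lqpv} has exactly one solution, and Lemma~\ref{lem:equiv-Maxwell}~(b) extends it uniquely to a solution in $H_{\alpha,\loc,0}(\curl,Q_\infty)$. There is one caveat. The Calder\'on map, and hence this argument, requires $|\alpha_n|\neq k$, which is the standing assumption of the section. I would state explicitly that this assumption is in force for part (i).

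\emph{Part (ii), existence.} I would apply the abstract singular perturbation result, Lemma~\ref{lem:sing-pert}, with $X=H_{\per,0}(\curl,Q_h)$, $L(\epsilon)=L_{k+i\epsilon}$ and $\bff(\epsilon)=\bff_{k+i\epsilon}$. Its hypotheses are checked as follows.
\begin{itemize}
\item Differentiability in $\epsilon$ near $0$: it follows from the explicit formula \eqref{eq:b_alpha}, the explicit form of the Calder\'on map in Lemma~\ref{Prop-Calderon}, and $\beta_n(k)\neq0$.
\item Invertibility of $L(\epsilon)$ for small $\epsilon>0$: this is Lemma~\ref{lem:L-isomorphism}.
\item Riesz number one and the orthogonal decomposition $\cN\oplus\cR$: these are Lemma~\ref{lem:L-prop-Maxwell}~(ii), transported by $J_\alpha$.
\item $\bff(0),\bff'(0)\in\cR(L(0))$ and injectivity of $PL'(0)$ on $\cN$: this is Lemma~\ref{lem:L-deriv}.
\end{itemize}
One technical point needs care. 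The terms of $\cT_{k+i\epsilon}$ involve $\beta_n(k+i\epsilon)$, and the derivative series must converge in operator norm. Since $|\beta_n|\sim|n|$, the derivative terms behave like $O(1)$ times a bounded multiplier, so this should hold. With the hypotheses verified, $\bF(\epsilon)\to\bF$ in $X$, where $L_k\bF=\bff_k$ and $PL'(0)\bF=0$. By \eqref{eq:L-deriv}, the second equation is exactly \eqref{con-max} after returning to $\bE=e^{ik\tilde\theta\cdot\tilde x}\bF_0$. Thus $\bE$ solves the diffraction problem and satisfies the constraint.

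\emph{Part (ii), uniqueness.} Let $\bE^{(1)},\bE^{(2)}$ be two solutions that both satisfy \eqref{con-max}, and put $\bw=\bE^{(1)}-\bE^{(2)}$. Then $\bw$ is a mode, so $\bv:=e^{-ik\tilde\theta\cdot\tilde x}\bw|_{Q_h}\in\cN(L(0))$. By Lemma~\ref{lem:L-prop-Maxwell}~(i), its extension coincides with the exponentially decaying extension $\bv_0$. The constraint \eqref{con-max} is linear in $\bE$, so it holds for $\bw$ against every mode $\bpsi$. By \eqref{eq:L-deriv}, this gives $\langle PL'(0)\bv,\bpsi\rangle=0$ for all $\bpsi\in\cN$. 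Lemma~\ref{lem:L-deriv}~(ii) then yields $\bv=0$, so $\bw=0$ in $Q_h$ and hence in $Q_\infty$ by the Rayleigh extension.

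The main obstacle I expect is the identification in the uniqueness step, namely showing that \eqref{con-max} written for the quasi-periodic field really equals $\frac{d}{d\epsilon}b_{k_\epsilon}(\bv,\bpsi)|_{\epsilon=0}$. The delicate part is that in \eqref{eq:L-deriv} the $\epsilon$-dependence of the extension $\bv_\epsilon$ had to be removed using $A\bpsi_0-kB\bpsi_0-k^2C\bpsi_0=0$ and the self-adjointness of $A,B,C$ on $H_{\per,0}(\curl,Q_\infty)$. Justifying this requires that $\bv_0'$ pair finitely with the decaying $\bpsi_0$. I would check it first via the exponential decay of $\bpsi_0$, which is uniform because only the finitely many $n$ with $|\alpha_n|>k$ near $k$ matter, each with $|\beta_n|>0$.
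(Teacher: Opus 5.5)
Your proposal is correct and follows essentially the same route as the paper: Lemma~\ref{lem:sing-pert} is applied with its hypotheses supplied by Lemmas~\ref{lem:L-isomorphism}, \ref{lem:L-prop-Maxwell} and \ref{lem:L-deriv}, the limiting equation $PL'(0)\bF=0$ is turned into \eqref{con-max} via \eqref{eq:L-deriv}, and uniqueness follows exactly as in Theorem~\ref{TH-LAP}~(ii) from the injectivity of $PL'(0)$ on the finite-dimensional space $\cN$. The points you flag as obstacles are already settled in the paper's proof of Lemma~\ref{lem:L-deriv}, so nothing beyond what you cite is needed: the pairing of $\bv_0'$ with the exponentially decaying $\bpsi_0$, and the identification of the constraint with $\frac{d}{d\epsilon}b_{k_\epsilon}$.
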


\begin{remark} (i)
If $k\in \R$ and  {$\eps(x)\mu(x)\geq\eps_0\mu_0\sin^2\theta_1$ on $Q_{h}$} is not
satisfied, one can construct examples as in the scalar case which show that guided/surface waves for the Maxwell equations do
not always exist.
\cred{ Let $h=1$ and suppose that $\epsilon(x)\equiv \epsilon_1, \mu(x)\equiv \mu_1$ in $0<x_3<1$. Assume that $q_0=\epsilon_0 \mu_0/(\epsilon_1\mu_1)<1$. We expand the guided wave mode into the series
\ben
&&\bE(x)=\sum_{|\alpha_n|>k} \begin{pmatrix}
	a_n^+ \\ b_n^+ \\ c_n^+
\end{pmatrix} e^{i\alpha_n \cdot \tilde x}\,e^{-|\beta_n| (x_3-1)}\quad\mbox{in}\quad x_3>1,\quad \alpha_n=(\alpha_n^{(1)}, \alpha_n^{(2)}),\\
&&\bE(x)=\sum_{|\alpha_n|>k} \begin{pmatrix}
	a_n^- \\ b_n^- \\ c_n^-
\end{pmatrix} e^{i\alpha_n \cdot \tilde x}\,e^{-|\gamma_n| x_3}+
\begin{pmatrix}
	-a_n^- \\ -b_n^- \\ c_n^-
\end{pmatrix} e^{i\alpha_n \cdot \tilde x}\,e^{|\gamma_n| x_3}
\quad\mbox{in}\quad 0<x_3<1,
\enn
where $\gamma_n:=\sqrt{k^2q_0-|\alpha_n|^2}=i|\gamma_n|$ if $|\alpha_n|>k$. Note that the form of $\bE$ ensures $\bnu\times \bE=0$ on $x_3=0$. The divergence-free condition $\nabla\cdot\bE=0$ in $x_3>h$ and $0<x_3<h$ leads to two equations
\begin{equation}\label{E1}
(a_n^+, b_n^+)\cdot i \alpha_n - c_n^+ |\beta_n|=0,\quad
(a_n^-, b_n^-)\cdot i \alpha_n - c_n^- |\gamma_n|=0.
\end{equation}
Meanwhile, we get from the continuity of $\bnu\times \bE$ and  $\bnu\times(\nabla\times\bE)$ on $x_3=1$ that
\be\label{E2}
\begin{pmatrix}
	a_n^+ \\ b_n^+
\end{pmatrix}=
\begin{pmatrix}
	a_n^- \\ b_n^-
\end{pmatrix} \left(e^{-|\gamma_n|}- e^{|\gamma_n|}\right)
\en
and
\be\label{E4}\left\{\begin{array}{lll}
b_n^+ |\beta_n|+i c_n^+ \alpha_n^{(2)}&=&(b_n^- |\gamma_n|+i c_n^- \alpha_n^{(2)})  \left(e^{-|\gamma_n|}+ e^{|\gamma_n|}\right),\\
i\,c_n^+\alpha_n^{(1)}+a_n^+|\beta_n|&=&(i\,c_n^-\alpha_n^{(1)}+a_n^-|\gamma_n|) \left(e^{-|\gamma_n|}+ e^{|\gamma_n|}\right).
\end{array}\right.
\en
The relations in \eqref{E1} and \eqref{E2} imply that
\begin{equation}\label{E3}
	c_n^+ |\beta_n|= c_n^- |\gamma_n| \,\left(e^{-|\gamma_n|}- e^{|\gamma_n|}\right).
\end{equation}
%Case (i): $a_n^\pm\neq 0, b_n^\pm\neq 0$.  From \eqref{E4}, it follows that
%\ben\left\{\begin{array}{lll}
% b_n^+ |\beta_n|+i c_n^+ \alpha_n^{(2)}&=&b_n^- |\gamma_n|+i c_n^- \alpha_n^{(2)},\\
%i\,c_n^+\alpha_n^{(1)}+a_n^+|\beta_n|&=&i\,c_n^-\alpha_n^{(1)}+a_n^-|\gamma_n|.
%\end{array}\right.
%\enn
Using \eqref{E1}, \eqref{E2} and \eqref{E3}, we may rewrite the  system \eqref{E4} as the algebraic equations for $(a_n^+, b_n^+, c_n^+)$:
\ben
\begin{pmatrix}
h_n & 0 & i\alpha_n^{(1)} d_n\\
0 & h_n & i\alpha_n^{(2)} d_n \\	
i	\alpha_n^{(1)} &
i	\alpha_n^{(2)} &
-|\beta_n|
\end{pmatrix}
\begin{pmatrix}
	a_n^+\\ b_n^+\\ c_n^+
\end{pmatrix}=0,
\enn
with
\ben
h_n:=|\beta_n|-|\gamma_n|\,\frac{ e^{-|\gamma_n|}+e^{|\gamma_n|}}{ e^{-|\gamma_n|}-e^{|\gamma_n|}},
\quad d_n:=1-\frac{|\beta_n|}{|\gamma_n|}\frac{ e^{-|\gamma_n|}+e^{|\gamma_n|}}{e^{-|\gamma_n|}-e^{|\gamma_n|}}. \enn
Straightforward calculations show the determinant of the coefficient matrix given by
\ben
k^2\left[1-q_0\frac{|\beta_n|}{|\gamma_n|}
\frac{e^{-|\gamma_n|}+e^{|\gamma_n|}}{ e^{-|\gamma_n|}-e^{|\gamma_n|}}  \right]>0.
\enn
This proves $a_n^\pm=b_n^\pm=c_n^\pm=0$ and thus $u=0$.
}
%\tcr{On the other hand, even for $\Im k>0$ it remains unclear to prove
%uniqueness of solutions if the lower bound condition  {$\eps(x)\mu(x)\geq\eps_0\mu_0
%\sin^2\theta_1$} on $Q_{h}$ is not satisfied.}
\end{remark}

\section{Appendix}
In this section, we present the singular perturbation argument used in the proofs of
Theorem \ref{TH-LAP} (ii)  and Theorem \ref{TH-max} (ii), and prove a compactness
result.

\subsection{A singular perturbation result}

\begin{lemma}[see \cite{K22p}]\label{lem:sing-pert}
Let $I=(0, \epsilon_0)$ for some small $\epsilon_0>0$. Let $L(\epsilon)$ be Fredholm
operators from some Hilbert space $X$ into itself and $f(\epsilon)\in \cR(L(\epsilon))$
for all $\epsilon\in[0, \epsilon_0)$ where $\cR(L)$ denotes the range of an operator $L$.
Furthermore, let $L(\epsilon)$ be one-to-one (thus invertible) for all $\epsilon\in I$
and let $L(0)$ have Riesz number
one, i.e. $\cN(L(0)^2)=\cN(L(0))$ where $\cN(L)$ denotes the nullspace of an operator $L$.
Let  $P:X\rightarrow\cN:=\cN(L(0))$ be the projection onto the nullspace of $L(0)$
along the direct decomposition $X=\cN\oplus\cR(L(0))$.  Finally, let $\epsilon\mapsto
f(\epsilon)$ and $\epsilon\mapsto L(\epsilon)$ be continuously differentiable for
$\epsilon\in [0,\epsilon_0)$ and let $PL^\prime(0)|_{\mathcal{N}}$ be an isomorphism from
$\cN$ onto itself where $L^\prime(0)$ denotes the (one-sided) derivative of $L(\epsilon)$
at $\epsilon=0$.

Then the mapping $\epsilon\mapsto v(\epsilon):=[L(\epsilon)]^{-1}f(\epsilon)$ has a
continuous  extension to a mapping from $[0,\epsilon_0)$ into $X$. The limit
$v(0)=\lim_{\epsilon\rightarrow 0+}v(\epsilon)$ is the unique solution of the system
\be\label{elimit}
L(0)\,v(0)\ =\ f(0)\,,\quad [PL^\prime(0)]\,v(0)\ =\ Pf^\prime(0)\,,
\en
where $f^\prime(0)$ denotes the one-sided derivative of $f(\epsilon)$ at $\epsilon=0$.
Moreover, there exist $\delta\in(0,\epsilon_0)$ and $c>0$ such that
\ben
\sup_{\epsilon\in[0,\delta]}\Vert v(\epsilon)\Vert_X\ \leq\
c\,\bigl[\sup_{\epsilon\in[0,\delta]}
\Vert f(\epsilon)\Vert_X+\sup_{\epsilon\in[0,\delta]}
\Vert f^\prime(\epsilon)\Vert_X\bigr]\,.
\enn
\end{lemma}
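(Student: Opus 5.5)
The plan is a Lyapunov--Schmidt splitting along $X=\cN\oplus\cR$, $\cN:=\cN(L(0))$, $\cR:=\cR(L(0))$, with $P$ the projection onto $\cN$ along $\cR$ and $Q=I-P$. Let $L_0:=L(0)$. Because the Riesz number is one, $L_0$ maps $\cR$ into itself, is injective there (since $\cN\cap\cR=\{0\}$), and is Fredholm of index zero on $\cR$. Hence $L_0|_{\cR}:\cR\to\cR$ is an isomorphism. I write $L(\epsilon)=L_0+\epsilon\,M(\epsilon)$ with $M(\epsilon):=\int_0^1L'(t\epsilon)\,dt$, and likewise $f(\epsilon)=f(0)+\epsilon g(\epsilon)$ with $g(\epsilon):=\int_0^1 f'(t\epsilon)\,dt$. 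Both $M$ and $g$ are continuous on $[0,\epsilon_0)$, and $M(0)=L'(0)$, $g(0)=f'(0)$.

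For $\epsilon>0$ I decompose $v(\epsilon)=w+r$ with $w\in\cN$ and $r\in\cR$, and project the equation $L(\epsilon)v=f(\epsilon)$ both ways. Using $L_0w=0$, $PL_0=0$ on $\cR$ and $Pf(0)=0$ (because $f(0)\in\cR$), the projected equations are
\begin{align*}
L_0 r+\epsilon\,QM(\epsilon)(w+r)&=Qf(\epsilon),\\
\epsilon\,PM(\epsilon)(w+r)&=\epsilon\,Pg(\epsilon).
\end{align*}
In the second equation I divide by $\epsilon>0$. This gives the $\epsilon$-regularized system
\[
\mathcal{A}(\epsilon)\binom{w}{r}:=\begin{pmatrix}PM(\epsilon)|_\cN & PM(\epsilon)|_\cR\\ \epsilon QM(\epsilon)|_\cN & L_0|_\cR+\epsilon QM(\epsilon)|_\cR\end{pmatrix}\binom{w}{r}=\binom{Pg(\epsilon)}{Qf(\epsilon)}
\]
on $\cN\times\cR$. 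It is equivalent to $L(\epsilon)v=f(\epsilon)$ for $\epsilon>0$. The operator $\mathcal{A}(\epsilon)$ depends continuously on $\epsilon\in[0,\epsilon_0)$ in operator norm.

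At $\epsilon=0$ the matrix is block upper triangular, with diagonal blocks $PL'(0)|_\cN$ and $L_0|_\cR$. Both are isomorphisms, the first by hypothesis, so $\mathcal{A}(0)$ is invertible. Since invertibility is an open condition, there is $\delta>0$ such that $\mathcal{A}(\epsilon)$ is invertible for $\epsilon\in[0,\delta]$ with uniformly bounded inverse. Then $\epsilon\mapsto\mathcal{A}(\epsilon)^{-1}$ is continuous there. Consequently $v(\epsilon)=w+r$ agrees for $\epsilon>0$ with a function continuous on $[0,\delta]$; uniqueness of $v(\epsilon)$ for $\epsilon>0$ comes from invertibility of $L(\epsilon)$. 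This yields the continuous extension.

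The bound follows from $\|Pg\|+\|Qf\|\le c(\sup\|f\|+\sup\|f'\|)$. Here I need $\|g(\epsilon)\|\le\sup\|f'\|$, and I would also bound $Pg$ via $Pf(\epsilon)=\epsilon Pg$ if that is preferable. Setting $\epsilon=0$ in the system gives $L_0r=Qf(0)=f(0)$, i.e.\ $L_0v(0)=f(0)$, and $PL'(0)(w+r)=Pf'(0)$, which is \eqref{elimit}.

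For uniqueness of solutions of \eqref{elimit}, take the difference $d$ of two solutions. It satisfies $L_0d=0$, so $d\in\cN$, and $PL'(0)d=0$, so $d=0$ by the hypothesis on $PL'(0)|_\cN$.

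The only delicate point is to justify that $PM(\epsilon)$ and $g$ are continuous up to $0$ so that the division by $\epsilon$ is harmless. This is guaranteed by the continuous differentiability on $[0,\epsilon_0)$ through the integral representation. The other structural step is the isomorphism of $L_0$ on $\cR$, which is where the Riesz number one is used.
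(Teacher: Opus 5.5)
The paper gives no proof of its own for this lemma. It only quotes it from \cite{K22p} and points to \cite{CK13,KS24}. Your Lyapunov--Schmidt argument is correct and is the standard direct proof of this result: you split along $X=\cN\oplus\cR(L(0))$, divide the $\cN$-component of the equation by $\epsilon$, and invert the block upper-triangular operator with diagonal blocks $PL'(0)|_{\cN}$ and $L(0)|_{\cR(L(0))}$ at $\epsilon=0$ by a perturbation argument. The points you leave implicit are harmless: $P$ and $I-P$ are bounded because $\cN$ and $\cR(L(0))$ are closed, and continuity of $v$ on $(\delta,\epsilon_0)$ follows from continuity of $\epsilon\mapsto L(\epsilon)^{-1}$ there.
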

The original version of Lemma \ref{lem:sing-pert} can be found in \cite[Theorem 1.32,
Section 1.4]{CK13}. A more direct proof is presented in \cite{KS24} and \cite{K22p} with
the characterization of the equation \eqref{elimit} of the limiting solution.

\subsection{Compact embedding theorem of $X_{\alpha,0}$}

\begin{lemma} \label{lem:Hodge}
The space
$$ X_{\alpha,0}\ =\ \left\{\bu\in H_{\alpha,0}(\curl,Q_h):\tilde b_\alpha(\bu,\nabla p)=0
\mbox{ for all } p \in H^1_{\alpha,0}(Q_h)\right\} $$
is compactly embedded in $L^2(Q_h)^{3}$.
\end{lemma}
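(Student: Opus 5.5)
We have to show that $X_{\alpha,0}$ embeds compactly into $(L^2(Q_h))^3$. The plan is to reduce this to the standard compact embedding of spaces with controlled curl, controlled divergence and a boundary condition (Weber/Picard type, quasi-periodic version as in \cite{GP22}). Concretely, for $\bu\in X_{\alpha,0}$ we will show three things: $\ddiv(\eps\bu)=0$ in $Q_h$ in the distributional sense, the tangential trace $\bnu\times\bu$ vanishes on $\Gamma_0$, and on $\Gamma_h$ the normal component $\bnu\cdot(\eps\bu)$ is determined by the tangential trace $\bnu\times\bu$ through a smoothing operator. These three facts give $H^1$-type (or at least $H^{1/2}$) regularity, or else they feed directly into a compactness argument via the Hodge splitting.

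\textbf{Step 1.} Take $p\in C_0^\infty(Q_h)$ (extended quasi-periodically). The boundary term in $\tilde b_\alpha(\bu,\nabla p)$ disappears and $\nabla\times\nabla p=0$. Hence $\tilde b_\alpha(\bu,\nabla p)=-k^2\int\frac{\eps}{\eps_0}\bu\cdot\nabla\overline p\,dx=0$, which says $\ddiv(\eps\bu)=0$. Next let $p\in H^1_{\alpha,0}(Q_h)$ be general. Integrating by parts then leaves only the term on $\Gamma_h$:
\[
k^2\langle \tfrac{\eps}{\eps_0}\bnu\cdot\bu,p\rangle_{\Gamma_h}=-\langle\widetilde\cT_\alpha(\bnu\times\bu),\bnu\times\nabla p\rangle .
\]
Using the explicit form in Lemma~\ref{Prop-Calderon}(i), and the computation already done in the proof of Lemma~\ref{lem:L-Fredholm-Maxwell} for $\Div[\widetilde\cT_\alpha(\bnu\times\bu)\times\bnu]$, we obtain the Fourier coefficients
\[
(\eps_0^{-1}\eps\,\bu\cdot\bnu)_n=\frac{c}{\beta_n}\,\hat\balpha_n\cdot\bu_n .
\]
Since $|\beta_n|\sim|n|$, this normal trace is controlled by a $\Div$-type quantity of the tangential trace, and so it lies one order smoother than one would a priori expect.

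\textbf{Step 2 (compactness).} Let $\bu^{(j)}\rightharpoonup0$ in $X_{\alpha,0}$; we want $\|\bu^{(j)}\|_{L^2}\to0$. Decompose $\bu^{(j)}=\nabla\phi^{(j)}+\bw^{(j)}$, where $\bw^{(j)}$ is the divergence-free part with vanishing normal trace on $\Gamma_h$ (the $L^2$-Helmholtz decomposition in the periodic cell with weight $\eps$). The space of $\bw$ with bounded curl, zero weighted divergence, $\bnu\times\bw=0$ on $\Gamma_0$ and $\bnu\cdot\eps\bw=0$ on $\Gamma_h$ embeds compactly into $L^2$. This is the classical Weber lemma for mixed boundary conditions on a Lipschitz periodic cell, and we cite \cite{GP22}. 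For the gradient part, $\phi^{(j)}$ solves a weighted Neumann–Dirichlet problem $\ddiv(\eps\nabla\phi)=0$, $\phi=0$ on $\Gamma_0$, with Neumann data on $\Gamma_h$ equal to the normal trace from Step 1. That data is bounded in $H^{-1/2}$ and depends only on the tangential trace, via a multiplier of order $0$ acting on $\Div$ of the trace. The idea is to show it converges strongly in $H^{-1/2}(\Gamma_h)$. Elliptic stability then gives $\nabla\phi^{(j)}\to0$ in $L^2$.

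\textbf{Main obstacle.} The difficulty is this last strong convergence. Weak convergence only gives $\bnu\times\bu^{(j)}\rightharpoonup0$ in $H^{-1/2}(\Div,\Gamma_h)$, and the multiplier $\hat\balpha_n\cdot\bu_n/\beta_n$ appears to have exactly order zero relative to the $\Div$ part. We would first try to split the multiplier as $1/\beta_n=1/(i|\alpha_n|)+O(|n|^{-3})$. The remainder is then compact. The leading part is a fixed tangential operator, and we would try to absorb it into a modified Helmholtz splitting, by using the boundary condition $\bnu\cdot\eps\nabla\phi=\Lambda(\Div\,\bnu\times\bu)$ with a principal symbol that is explicitly of sign-definite type, so that it can be moved to the left and handled by coercivity. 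If this fails, the fallback is to invoke the quasi-periodic Hodge and compactness results of \cite{GP22} directly, after verifying that their abstract hypotheses — a coercive gradient block, which is Lemma~\ref{Prop-Calderon}(iii) — are satisfied.
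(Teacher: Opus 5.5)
Your Step 1 is exactly the paper's argument. The paper derives the divergence condition and the normal-trace identity on $\Gamma_h$, and then simply cites Weber's compactness result \cite{Weber} via \cite[Lemma 3.26]{GP22}. Your Step 2, however, does not close. The gap sits exactly where you put it, and the analysis around it has several errors.

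\textbf{The normal trace is not one order smoother.} Up to a factor, $\hat\balpha_n\cdot\bu_n$ are the Fourier coefficients of the surface divergence of the tangential component $\bu_T$, i.e.\ of the surface scalar curl of $\bnu\times\bu$. $H(\curl,Q_h)$ does not control this quantity; it only controls $\Div(\bnu\times\bu)=-\bnu\cdot(\nabla\times\bu)$. So your Neumann data $g$ is merely bounded in $H^{-1/2}(\Gamma_h)$. That is nothing more than the normal-trace bound for the divergence-free field $\eps\bu$; compare $\Vert\Div[\widetilde\cT_\alpha(\bnu\times\bu)\times\bnu]\Vert_{H^{-1/2}}\le c\Vert\bu\Vert_{L^2}$ in the proof of Lemma~\ref{lem:L-Fredholm-Maxwell}.

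\textbf{The proposed fixes do not address the difficulty.} Writing the data as $\Lambda(\Div\,\bnu\times\bu)$ is incorrect: if it had that form, $g$ would be bounded in $H^{1/2}(\Gamma_h)$ and there would be no obstacle at all. Splitting the symbol $1/\beta_n$ does not help either, since the problem is that the input $\bu_T^{(j)}$ converges only weakly, not the symbol. Your fallback also targets the wrong hypothesis: coercivity of the gradient block (Lemma~\ref{Prop-Calderon}(iii)) gives the direct sum \eqref{eq:Hodge}, not compactness of $X_{\alpha,0}$.

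\textbf{The missing idea: glue with the radiating extension.} The identity you derived in Step 1, $(\frac{\eps}{\eps_0}\bu\cdot\bnu)_n=-\beta_n^{-1}\hat\balpha_n\cdot\bu_n$, is precisely the relation $E^{(3)}_n=-\beta_n^{-1}\hat\balpha_n\cdot\tilde\bE_n$ satisfied by the upward Rayleigh extension.
\begin{itemize}
\item Extend $\bu\in X_{\alpha,0}$ into $h<x_3<h'$ by the radiating solution $\bu^+$ of \eqref{TB} with $\bnu\times\bu^+=\bnu\times\bu$ on $\Gamma_h$. The glued field $\tilde\bu$ has continuous tangential trace. With $\eps=\eps_0$ above $\Gamma_h$, the normal component of $\eps\tilde\bu$ is also continuous. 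Hence $\tilde\bu\in H_\alpha(\curl,Q_{h'})$ and $\ddiv(\eps\tilde\bu)=0$ in all of $Q_{h'}$.
\item The extension is bounded. All components of the traces of $\bu^+$ and $\nabla\times\bu^+$ on $\Gamma_h$ lie in $H^{-1/2}(\Gamma_h)$ with norm $\le c\Vert\bu\Vert_{H(\curl,Q_h)}$, and the evanescent modes decay like $e^{-|\beta_n|(x_3-h)}$.
\item Take a cutoff $\chi$ equal to $1$ on $Q_h$ and vanishing near $\Gamma_{h'}$. Then $\chi\tilde\bu$ lies in the quasi-periodic space $H_0(\curl)\cap H(\ddiv\eps)$ on $Q_{h'}$, with norm $\le C\Vert\bu\Vert_{H(\curl,Q_h)}$. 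Weber's theorem now gives the compact embedding.
\end{itemize}

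\textbf{Repairing your own splitting.} If you want to keep $\bu=\nabla\phi+\boldsymbol{w}$, insert $\bu_T=\nabla_T\phi+\boldsymbol{w}_T$ into the Step 1 identity. The gradient part contributes a boundary term with symbol $i|\alpha_n|^2/\beta_n$ (nonnegative real part) to the left-hand side. This turns the problem for $\phi$ into a coercive Robin problem with data $-\beta_n^{-1}\hat\balpha_n\cdot\boldsymbol{w}_n$. Since $\boldsymbol{w}$ has vanishing normal component on the flat boundary $\Gamma_h$, a reflection argument gives $H^1$-regularity of $\boldsymbol{w}$ near $\Gamma_h$. This needs $\eps$ constant there, e.g.\ after enlarging $h$. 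Then the data converge strongly in $H^{-1/2}(\Gamma_h)$, and $\nabla\phi^{(j)}\to0$ in $L^2$.
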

\begin{proof}
From the definition of the sesquilinear form $\tilde b_\alpha$,
$$ \tilde b_\alpha(\bu,\bpsi)\ =\ \int_{Q_h}\bigl[(\nabla\times\bu)\cdot
(\nabla\times\overline{\bpsi})-k^2q\,\bu\cdot\overline{\bpsi}\bigr]\,dx-\int_{\Gamma_h}
\widetilde\cT_\alpha(\bnu\times \bu)\cdot(\bnu\times\overline{\bpsi})\,ds\,, $$
it follows for $\bu\in X_{\alpha,0}$ and $\bpsi=\nabla p$ for $p\in H^1_{\alpha,0}(Q_h)$
that $$-k^2\int_{Q_h}q\,\bu\cdot\nabla\overline p\,dx\ =\ \int_{\Gamma_h}
\widetilde\cT_\alpha(\bnu\times \bu)\cdot(\bnu\times\nabla\overline  p)\,ds $$
Applying the divergence theorem in $Q_h$ on the left hand side and on $\Gamma_h$ on the
right hand side (i.e. $\int_{\Gamma_h}\bv\cdot\nabla p\,ds=-\int_{\Gamma_h}\Div\bv\,p\,ds$
where $\Div$ denotes the surface divergence on $\Gamma_h$) we get
$$ k^2 \int_{Q_h}\ddiv (q\,\bu)\,\overline{p}\,dx\ +\ \int_{\Gamma_{h}}\left\{
\Div\bigl[\widetilde\cT_\alpha(\bu\times\bnu)\times\bnu\bigr]-\bu\cdot\bnu\right\}\,
\overline{p}\,ds\ =\ 0\,. $$
Since $p$ is arbitrary we obtain
\begin{align*}
\ddiv(q\bu)& = 0 \quad \mbox{in }Q_h, \\
\bu\cdot\bnu & = \frac{1}{k^2}\Div\bigl[\widetilde\cT_\alpha(\bu\times\bnu)\times\bnu\bigr]
\quad\mbox{on }\Gamma_h.
\end{align*}
Applying the compactness result of \cite{Weber}, one can show that
the space $X_{\alpha,0}$ is compactly embedded in $L^2(Q_h)^{3}$; see
\cite[Lemma 3.26]{GP22}.
\end{proof}

\end{document}